\begin{document}

\newtheorem{theorem}{Theorem}
\newtheorem{lemma}{Lemma}
\newtheorem{proposition}{Proposition}
\newtheorem{rmk}{Remark}
\newtheorem{example}{Example}
\newtheorem{exercise}{Exercise}
\newtheorem{definition}{Definition}
\newtheorem{corollary}{Corollary}
\newtheorem{notation}{Notation}
\newtheorem{claim}{Claim}
\newtheorem{Conjecture}{Conjecture}

\newtheorem{dif}{Definition}

 \newtheorem{thm}{Theorem}[section]
 \newtheorem{cor}[thm]{Corollary}
 \newtheorem{lem}[thm]{Lemma}
 \newtheorem{prop}[thm]{Proposition}
 \theoremstyle{definition}
 \newtheorem{defn}[thm]{Definition}
 \theoremstyle{remark}
 \newtheorem{rem}[thm]{Remark}
 \newtheorem*{ex}{Example}
 \numberwithin{equation}{section}

\newcommand{\vertiii}[1]{{\left\vert\kern-0.25ex\left\vert\kern-0.25ex\left\vert #1
    \right\vert\kern-0.25ex\right\vert\kern-0.25ex\right\vert}}

\newcommand{\R}{{\mathbb R}}
\newcommand{\C}{{\mathbb C}}
\newcommand{\U}{{\mathcal U}}
\newcommand{\norm}[1]{\left\|#1\right\|}
\renewcommand{\(}{\left(}
\renewcommand{\)}{\right)}
\renewcommand{\[}{\left[}
\renewcommand{\]}{\right]}
\newcommand{\f}[2]{\frac{#1}{#2}}
\newcommand{\im}{i}
\newcommand{\cl}{{\mathcal L}}
\newcommand{\ck}{{\mathcal K}}

%GREEK LETTERS
\newcommand{\al}{\alpha}
\newcommand{\be}{\beta}
\newcommand{\wh}[1]{\widehat{#1}}
\newcommand{\ga}{\gamma}
\newcommand{\Ga}{\Gamma}
\newcommand{\de}{\delta}
\newcommand{\ben}{\beta_n}
\newcommand{\De}{\Delta}
\newcommand{\ve}{\varepsilon}
\newcommand{\ze}{\zeta}
\newcommand{\Th}{\Theta}
\newcommand{\ka}{\kappa}
\newcommand{\la}{\lambda}
\newcommand{\laj}{\lambda_j}
\newcommand{\lak}{\lambda_k}
\newcommand{\La}{\Lambda}
\newcommand{\si}{\sigma}
\newcommand{\Si}{\Sigma}
\newcommand{\vp}{\varphi}
\newcommand{\om}{\omega}
\newcommand{\Om}{\Omega}
\newcommand{\ra}{\rightarrow}

%EUCLIDEAN SPACES
\newcommand{\ro}{{\mathbf R}}
\newcommand{\rn}{{\mathbf R}^n}
\newcommand{\rd}{{\mathbf R}^d}
\newcommand{\rmm}{{\mathbf R}^m}
\newcommand{\rone}{\mathbf R}
\newcommand{\rtwo}{\mathbf R^2}
\newcommand{\rthree}{\mathbf R^3}
\newcommand{\rfour}{\mathbf R^4}
\newcommand{\ronen}{{\mathbf R}^{n+1}}
\newcommand{\ku}{\mathbf u}
\newcommand{\kw}{\mathbf w}
\newcommand{\kf}{\mathbf f}
\newcommand{\kz}{\mathbf z}
\newcommand{\bb}{\mathbf B}

\newcommand{\N}{\mathbf N}

\newcommand{\tn}{\mathbf T^n}
\newcommand{\tone}{\mathbf T^1}
\newcommand{\ttwo}{\mathbf T^2}
\newcommand{\tthree}{\mathbf T^3}
\newcommand{\tfour}{\mathbf T^4}

\newcommand{\zn}{\mathbf Z^n}
\newcommand{\zp}{\mathbf Z^+}
\newcommand{\zone}{\mathbf Z^1}
\newcommand{\zz}{\mathbf Z}
\newcommand{\ztwo}{\mathbf Z^2}
\newcommand{\zthree}{\mathbf Z^3}
\newcommand{\zfour}{\mathbf Z^4}

\newcommand{\hn}{\mathbf H^n}
\newcommand{\hone}{\mathbf H^1}
\newcommand{\htwo}{\mathbf H^2}
\newcommand{\hthree}{\mathbf H^3}
\newcommand{\hfour}{\mathbf H^4}

\newcommand{\cone}{\mathbf C^1}
\newcommand{\ctwo}{\mathbf C^2}
\newcommand{\cthree}{\mathbf C^3}
\newcommand{\cfour}{\mathbf C^4}
\newcommand{\dpr}[2]{\langle #1,#2 \rangle}

\newcommand{\sn}{\mathbf S^{n-1}}
\newcommand{\sone}{\mathbf S^1}
\newcommand{\stwo}{\mathbf S^2}
\newcommand{\sthree}{\mathbf S^3}
\newcommand{\sfour}{\mathbf S^4}

%SPACES
\newcommand{\lp}{L^{p}}
\newcommand{\lppr}{L^{p'}}
\newcommand{\lqq}{L^{q}}
\newcommand{\lr}{L^{r}}
\newcommand{\echi}{(1-\chi(x/M))}
\newcommand{\chip}{\chi'(x/M)}

\newcommand{\wlp}{L^{p,\infty}}
\newcommand{\wlq}{L^{q,\infty}}
\newcommand{\wlr}{L^{r,\infty}}
\newcommand{\wlo}{L^{1,\infty}}

\newcommand{\lprn}{L^{p}(\rn)}
\newcommand{\lptn}{L^{p}(\tn)}
\newcommand{\lpzn}{L^{p}(\zn)}
\newcommand{\lpcn}{L^{p}(\cn)}
\newcommand{\lphn}{L^{p}(\cn)}

\newcommand{\lprone}{L^{p}(\rone)}
\newcommand{\lptone}{L^{p}(\tone)}
\newcommand{\lpzone}{L^{p}(\zone)}
\newcommand{\lpcone}{L^{p}(\cone)}
\newcommand{\lphone}{L^{p}(\hone)}

\newcommand{\lqrn}{L^{q}(\rn)}
\newcommand{\lqtn}{L^{q}(\tn)}
\newcommand{\lqzn}{L^{q}(\zn)}
\newcommand{\lqcn}{L^{q}(\cn)}
\newcommand{\lqhn}{L^{q}(\hn)}

\newcommand{\lo}{L^{1}}
\newcommand{\lt}{L^{2}}
\newcommand{\li}{L^{\infty}}
\newcommand{\beqn}{\begin{eqnarray*}}
\newcommand{\eeqn}{\end{eqnarray*}}
\newcommand{\pplus}{P_{Ker[\cl_+]^\perp}}

\newcommand{\co}{C^{1}}
\newcommand{\coi}{C_0^{\infty}}

%CALLIGRAPHIC
\newcommand{\ca}{\mathcal A}
\newcommand{\cs}{\mathcal S}
\newcommand{\cm}{\mathcal M}
\newcommand{\cf}{\mathcal F}
\newcommand{\cb}{\mathcal B}
\newcommand{\ce}{\mathcal E}
\newcommand{\cd}{\mathcal D}
\newcommand{\cg}{\mathcal G}
\newcommand{\cn}{\mathcal N}
\newcommand{\cz}{\mathcal Z}
\newcommand{\crr}{\mathbf R}
\newcommand{\cc}{\mathcal C}
\newcommand{\ch}{\mathcal H}
\newcommand{\cq}{\mathcal Q}
\newcommand{\cp}{\mathcal P}
\newcommand{\cx}{\mathcal X}
\newcommand{\eps}{\epsilon}

%GENERAL
\newcommand{\pv}{\textup{p.v.}\,}
\newcommand{\loc}{\textup{loc}}
\newcommand{\intl}{\int\limits}
\newcommand{\iintl}{\iint\limits}
\newcommand{\dint}{\displaystyle\int}
\newcommand{\diint}{\displaystyle\iint}
\newcommand{\dintl}{\displaystyle\intl}
\newcommand{\diintl}{\displaystyle\iintl}
\newcommand{\liml}{\lim\limits}
\newcommand{\suml}{\sum\limits}
\newcommand{\ltwo}{L^{2}}
\newcommand{\supl}{\sup\limits}
\newcommand{\df}{\displaystyle\frac}
\newcommand{\p}{\partial}
\newcommand{\Ar}{\textup{Arg}}
\newcommand{\abssigk}{\widehat{|\si_k|}}
\newcommand{\ed}{(1-\p_x^2)^{-1}}
\newcommand{\tT}{\tilde{T}}
\newcommand{\tV}{\tilde{V}}
\newcommand{\wt}{\widetilde}
\newcommand{\Qvi}{Q_{\nu,i}}
\newcommand{\sjv}{a_{j,\nu}}
\newcommand{\sj}{a_j}
\newcommand{\pvs}{P_\nu^s}
\newcommand{\pva}{P_1^s}
\newcommand{\cjk}{c_{j,k}^{m,s}}
\newcommand{\Bjsnu}{B_{j-s,\nu}}
\newcommand{\Bjs}{B_{j-s}}
\newcommand{\Ly}{L_i^y}
\newcommand{\dd}[1]{\f{\partial}{\partial #1}}
\newcommand{\czz}{Calder\'on-Zygmund}
\newcommand{\chh}{\mathcal H}

%BEGIN END
\newcommand{\lbl}{\label}
\newcommand{\beq}{\begin{equation}}
\newcommand{\eeq}{\end{equation}}
\newcommand{\beqna}{\begin{eqnarray*}}
\newcommand{\eeqna}{\end{eqnarray*}}
\newcommand{\bp}{\begin{proof}}
\newcommand{\ep}{\end{proof}}
\newcommand{\bprop}{\begin{proposition}}
\newcommand{\eprop}{\end{proposition}}
\newcommand{\bt}{\begin{theorem}}
\newcommand{\et}{\end{theorem}}
\newcommand{\bex}{\begin{Example}}
\newcommand{\eex}{\end{Example}}
\newcommand{\bc}{\begin{corollary}}
\newcommand{\ec}{\end{corollary}}
\newcommand{\bcl}{\begin{claim}}
\newcommand{\ecl}{\end{claim}}
\newcommand{\bl}{\begin{lemma}}
\newcommand{\el}{\end{lemma}}
\newcommand{\dea}{(-\De)^\be}
\newcommand{\naa}{|\nabla|^\be}
\newcommand{\cj}{{\mathcal J}}
\newcommand{\ci}{{\mathcal I}}
\newcommand{\ubb}{{\mathbf u}}

\title[Standing Waves of the Schr\"odinger equation with Concentrated 
nonlinearity]{On the standing waves of the Schr\"odinger equation with concentrated nonlinearity}

%----------Author 2
\author[Abba Ramadan and  Atanas G. Stefanov]{\sc Abba Ramadan and Atanas G. Stefanov}
\address{Department of Mathematics,
University of Kansas,
1460 Jayhawk Boulevard,  Lawrence KS 66045--7523, USA}
\email{aramadan@ku.edu}, \email{stefanov@ku.edu}

 \thanks{A. Ramadan is partially supported by   NSF-DMS,   \# 1908626.  
 A. Stefanov acknowledges  partial support  from NSF-DMS,   \# 1908626.}

\subjclass[2010]{Primary 35Q55, 35Q40}

%\keywords{ }

\date{\today}
 
\begin{abstract}
We study the  concentrated NLS on $\rn$, with power non-linearities, driven by the fractional Laplacian, $(-\De)^s, s>\f n2$.  We construct the solitary waves  explicitly, in an optimal range of the parameters, so that they belong to the natural energy space $H^s$. Next, we provide a complete classification of  their spectral stability. Finally, we show that the waves are non-degenerate and consequently orbitally stable, whenever they are spectrally stable.

 Incidentally, our construction shows that the soliton profiles for the concentrated NLS  are in fact exact minimizers of the Sobolev embedding $H^s(\rn)\hookrightarrow L^\infty(\rn)$, which provides an alternative calculation and justification of the sharp constants in these inequalities. 
\end{abstract}

%%% ----------------------------------------------------------------------
\maketitle
%%% ----------------------------------------------------------------------
%\tableofcontents
\section{ Introduction} 
The (focusing) nonlinear Schr\"odinger equation, with generalized power non-linearity 
\begin{equation}
\label{10} 
iu_t + \De u+  |u|^{2\si} u=0, (t,x)\in \rone\times \rn
\end{equation}
is a basic model in theoretical physics and applied mathematics (e.g.  quantum mechanics and water waves theory) and practical engineering applications. It has been studied extensively in the last fifty years, in particular with regards to the well-posedness of the Cauchy problem and the stability of its solitary waves. The well-posedness theory is classical by now, \cite{Caz} and states that local well-posedness holds for any $\si>0$, whenever the data $u_0\in H^s(\rn), s\geq 0$. The global well-posedness results rely upon the conservation law, which state 
that the following quantities, namely the mass $M(u)$ and the energy $E(u)$
\begin{eqnarray*}
M(u)&=& \int_{\rn} |u(t,x)|^2 dx=const. \\
E(u) &=& \f{1}{2}\int_{\rn} |\nabla u(t,x)|^2 dx - \f{1}{2\si +2} \int_{\rn} |u(t,x)|^{2\si+2} dx=const.
\end{eqnarray*}
 As such, solutions with initial data $u_0\in H^1(\rn)$ yield global solutions, whenever the problem is $L^2$ sub-critical, i.e.  $\si<\f{2}{n}$, while for $\si\geq \f{2}{n}$,  some initial data gives rise to finite time blow-ups. Interestingly, the ground states for \eqref{10} are stable exactly in the $L^2$ critical range $\si<\f{2}{n}$, while they are unstable in the supercritical regime $\si>\f{2}{n}$. In the $L^2$ critical case, $\si=\f{2}{n}$, the equation \eqref{10} exhibits an additional symmetry, the so-called quasi-conformal invariance, which allows one to exhibit special self-similar type solutions, which show that blows up also occurs in the critical case. 

In this work, we analyze  related model, the focusing non-linear Schr\"odinger equation with concentrated non-linearity. As our dispersive models will be driven by fractional Laplacians, let us introduce  the proper framework. We set the  Fourier transform and its inverse by the formulas 
$$
\hat{f}(\xi)=\int_{\rn} f(x) e^{-2\pi i x\cdot \xi} dx; \ \  f(x)=\int_{\rn} \hat{f}(\xi) e^{2\pi i x\cdot \xi} 
d\xi.
$$
In that case, the Laplacian is given as a Fourier multiplier (on the space of Schwartz functions $\cs$) via $\widehat{(-\De) f}=4\pi^2 |\xi|^2 \hat{f}(\xi)$. More generally, for all $s>0$ 
$$
\widehat{(-\De)^s f}=(2\pi  |\xi|)^{2s} \hat{f}(\xi). 
$$
Now, the focussing NLS with concentrated non-linearity is the following 
\begin{equation}
\label{20} 
\left\{\begin{array}{l}
iu_t =( (-\De)^s -  |u|^{2\si} \de_0) u, \ \ (t,x)\in \rone\times \rn \\ 
u(0,x)=u_0(x)
\end{array}
\right.
\end{equation}
 Our definition of a solution is as follows: a  continuous in $x$  function $u$ is a weak solution of \eqref{20}, if it  satisfies 
\begin{eqnarray*}
& & 
i \left(\dpr{u(t, \cdot)}{\psi(t, \cdot)} - \dpr{u_0}{\psi(0,\cdot)} - \int_0^t \dpr{u(s,\cdot)}{\psi_s(s,\cdot)}\right)=\\
& & = \int_0^t \dpr{(-\De)^{\f{s}{2}} u(s,\cdot)}{(-\De)^{\f{s}{2}} \psi(s,\cdot)}ds- \int_0^t |u(s,0)|^{2\si}u(s,0) \psi(s,0)ds
\end{eqnarray*}
for all test functions $\psi$. For the case of the standard Laplacian, i.e. $s=1$, the model \eqref{20} has been used to model resonant tunneling, \cite{JL}, the dynamics of mixed states, \cite{Nier}, quantum turbulence, \cite{Az}, the generation of weakly bounded states  close to the instability, \cite{Wood} among others.  

The fractional Laplacian perturbed by a delta potential, together with their self-adjoint extensions and various applications,  have been recently considered in \cite{CFT}. In the case of one spatial dimension, $n=1$ and $s>\f{1}{2}$ , the local well-posedness as well as the conservation of mass and energy 
\begin{eqnarray}
\label{conserve}
	M(u) &=& \int_{\rn} |u(t,x)|^2 dx=const. \\
	E(u) &=& \f{1}{2} \|(-\De)^{\f{s}{2}} u\|_{L^2}^2 - 
	\f{1}{2\si +2} |u(t,0)|^{2\si+2} =const.
\end{eqnarray}
was recently  established in \cite{CFT}. Even though the results in \cite{CFT} are stated for the one dimensional case only,  it seems plausible that they  can be extended in any dimension $n$ and $s>\f n2$  using similar techniques. It is important to note that since our  interests is  in continuous in $x$ functions, the natural spaces for well-posedness, in the scale of the Sobolev spaces, should be $H^s(\rn), s>\f n2$. Another reason why this is, in our opinion, a more natural class of problems to consider,  is that we would like  waves which belong to the energy space $H^s(\rn)$, as dictated by the conservation of $E(u)$. As we shall see below, the solitary waves belong to this space only for $s>\f n2$. 

It has to be noted however, that it is certainly possible (and it is in fact considerably more challenging, the furthest one is from the 
threshold $s=\f n2$) to consider \eqref{20} in cases where $s<\f n2$, and this has been addressed, at least in low dimensional situations, in the recent papers, \cite{ADFT, ADFT2, ANO, ANO2, AT}. 
Regarding analysis of blow up solutions for the concentrated NLS (although not necessarily in the case of interest $s>\f n2$), this was carried out recently  in \cite{ADFT2}. 

Our main  interest in the model \eqref{20} are its solitary waves and their stability. More specifically, we consider  solutions in the form $u=e^{i \om t} \phi$, $\phi$ real-valued,  which naturally satisfy the profile equation. This is again understood in the weak sense described above
\begin{equation}
\label{30} 
(-\De)^s \phi+\om \phi - |\phi(0)|^{2\si} \phi(0) \de_0=0.
\end{equation}
We take the opportunity to note that  in many cases considered herein, one cannot expect the positivity of  $\phi$, as in the classical case. This is why, we keep the absolute value in \eqref{30}. 

The question for the stability  of these  waves, when $s=1$,  has been considered in 
several contexts recently, see \cite{AN1}, \cite{ANO}, \cite{ANO2} for the three dimensional case $n=3$ and  \cite{ACCT}, for $n=2$. Again, some of these  works consider cases mostly outside of the range of consideration herein $s>\f n2$. 
 
Before we address the construction of the solitons (that is, solutions of \eqref{30}), and since our situation is a bit non-standard, we would like to outline the framework for the  stability of the waves. 
\subsection{Linearized problem for the concentrated NLS}
As is customary, the spectral/linearized stability of the standing waves, i.e. the solutions of \eqref{30}, guides us in the study of the actual non-linear dynamics, when one starts close   to these solutions\footnote{and indeed in the understanding of the ranges of $\si$ that give global existence viz. a viz blow up, as discussed above}.   
More specifically, if we linearize around the solitary waves and ignore quadratic and higher order contributions, we obtain a linear system, whose spectral information plays a part in the dynamics. To that end, we take $u=e^{i\om t}(\phi+v)$ and plug it in \eqref{20}, ignoring any $O(v^2)$ term, utilizing \eqref{30} and setting $ (v_1, v_2):=(\Re v, \Im v)=v $,  we obtain 
\begin{equation}
\label{412} 
\left(\begin{array}{c}
\Re v\\ \Im v
\end{array}\right)_t = \left(\begin{array}{cc}
0 & -1 \\ 1 & 0
\end{array}\right) \left(\begin{array}{cc}
\cl_- & 0 \\ 0 & \cl_+
\end{array}\right) \left(\begin{array}{c}
\Re v\\ \Im v
\end{array}\right), 
\end{equation}
where the following fractional Schr\"odinger operators are introduced
\begin{eqnarray*}
\cl_+ &=&(-\De)^s +\om - (2\sigma+1) |\phi(0)|^{2\sigma}\delta_0, \\
\cl_- &=&(-\De)^s +\om -  |\phi(0)|^{2\sigma}\delta_0.
\end{eqnarray*}
This formulas are heuristic in the sense that the operators $\cl_\pm$ are not yet properly defined, in terms of domains etc. This is generally not an easy task\footnote{Although, as it turns out, we shall need to restrict to the case $s>\f{n}{2}$, which will make such definitions in a sense canonical} will appropriately be define in later section, see Section \ref{sec:2.3}. 
With the introduction of  the operators
$$
\cj:=\left(\begin{array}{cc}
0 & -1 \\ 1 & 0
\end{array}\right) , \cl:=\left(\begin{array}{cc}
\cl_- & 0 \\ 0 & \cl_+
\end{array}\right), 
$$
and the assignment $\left(\begin{array}{c}
\Re v\\ \Im v
\end{array}\right)\to e^{\la t} \left(\begin{array}{c}
v_1\\ v_2
\end{array}\right)=:e^{\la t} \vec{v}$, we obtain the following time-independent linearized eigenvalue problem 
\begin{equation}
\label{414} 
\cj \cl \vec{v}=\la \vec{v}. 
\end{equation}
Since we are interested in stability of waves, it will be appropriate to give a standard definition of stability as follow.

\begin{definition}
	\label{defi:19} The wave $e^{i \om t} \phi$ is said to be {\bf spectrally unstable}, if the eigenvalue problem \eqref{414} has a  solution $(\la, \vec{v})$, with $\Re \la>0$ and $\vec{v}\neq 0, \vec{v} \in D(\cl)$. Otherwise, i.e. if \eqref{414} has no non-trivial solutions, with $\Re \la>0$, we say that the wave is {\bf spectrally stable}.  
	
	We say that $e^{i \om t} \phi$ is  {\bf orbitally stable} solution of \eqref{20}, if for every $\eps>0$, there exists $\de=\de(\eps)$, so that whenever 
	$\|u_0- \phi\|_{H^{s}(\rn)}<\de$, then the following statements hold.
	\begin{itemize}
		\item The solution $u$ of \eqref{20}, in appropriate sense, with initial data 
		$u_0\in H^s$ is globally in $H^{s}(\rn)$, i.e. $u(t, \cdot)\in H^{s}(\rn)$. 
	\item	
		$$\sup_{t>0} \inf_{\theta\in \rone} \|u(t, \cdot) - e^{ -i(\om t+ \theta)} \phi(\cdot)\|_{H^s(\rn)}<\eps. 
		$$
	\end{itemize}
	
\end{definition}
The connection between the two main notions of stability, namely spectral and orbital stability has been explored extensively in the literature - see for example the excellent book \cite{KP}. Generally speaking, spectral stability is a prerequisite for orbital stability, and in many cases of interest and under some natural, but not necessarily easy to check conditions, see Section 5.2.2 in \cite{KP}, spectral stability implies orbital stability. In the case under consideration, the conditions outlined in \cite{KP} do not apply, so we provide a direct proof of orbital stability via contradiction argument, in the cases of spectral stability, by following the original idea by T.E. Benjamin. 

We should also point out that the reverse connection, namely spectral instability implies orbital instability. Basic heuristics (or even some more formal arguments) imply that this must be indeed the case.  However, in terms of rigorous results, there are results,   
if there is a positive instability mode present,  via a direct ODE Lyapunov method - see for example \cite{KarS} for a sample statement. As in the stability case, there is no satisfactory general result that would cover our examples, so we leave our rigorous conclusions at the level of  spectral instability of the waves and we do not comment further on (the likely) orbital instability thereof. 

\subsection{Main results}
Before we present our  existence result for the singular elliptic problem \eqref{30},    let us introduce a function 
$\cg_s^\la$, which will be a basic building block in our analysis. Namely, for all $\la>0$ and $s>0$,  
$$
\widehat{\cg_s^\la} (\xi) = \f{1}{(2\pi|\xi|)^{2s}+\la}. 
$$
We first state a few results related to the existence of the waves $\phi_\om$, under some conditions on the parameters $s,\om,n$, which turn out to be necessary as well. Then, we discuss the fact that these waves are also minimizers of a Sobolev embedding inequality and we present its exact constant. 
\subsubsection{Existence of the waves $\phi_\om$}
\begin{theorem}(Existence standing waves of the concentrated NLS)
	\label{r:1} 
	Let $\om>0, s>\f n2$ and $\si>0$. Then,  the function $\phi$, with 
$$
	\hat{\phi}_\om(\xi)= \left(\int_{\rn} \f{1}{(2\pi|\xi|)^{2s}+\om}  d\xi\right)^{-(1+\f{1}{2\si})} \f{1}{(2\pi|\xi|)^{2s}+\om}.
	$$
	is a solution of \eqref{30}. Alternatively, 
	$$
	\phi_\om(x)= \f{\cg_s^\om(x)}{(\cg_s^\om(0))^{1+\f{1}{2\si}}}.
	$$
\end{theorem}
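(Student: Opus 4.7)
The plan is to take the Fourier transform of the profile equation \eqref{30} and solve algebraically, using that $\widehat{\delta_0}\equiv 1$ under the convention fixed in the introduction. Writing $c := |\phi(0)|^{2\sigma}\phi(0)$, the equation
$$
(-\Delta)^s \phi + \omega\phi = c\,\delta_0
$$
becomes, on the Fourier side,
$$
\bigl((2\pi|\xi|)^{2s}+\omega\bigr)\hat\phi(\xi) = c,
$$
so necessarily
$$
\hat\phi(\xi) = \frac{c}{(2\pi|\xi|)^{2s}+\omega} = c\,\widehat{\cg_s^\om}(\xi).
$$

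Next I would impose the self-consistency condition at $x=0$. Since $s>\tfrac{n}{2}$, the function $\widehat{\cg_s^\om}$ decays like $|\xi|^{-2s}$ at infinity and is therefore in $L^1(\rn)$; this also guarantees $\hat\phi\in L^1$ so that $\phi$ is bounded and continuous, and that $\phi\in H^s(\rn)$ (because $|\xi|^s\hat\phi\in L^2$ is equivalent to $2s-n>0$, i.e.\ $s>\tfrac{n}{2}$). In particular the pointwise value $\phi(0)$ is well-defined by Fourier inversion:
$$
\phi(0) = \int_{\rn}\hat\phi(\xi)\,d\xi = c\int_{\rn}\frac{d\xi}{(2\pi|\xi|)^{2s}+\omega} = c\,\cg_s^\om(0).
$$
Substituting $c=|\phi(0)|^{2\sigma}\phi(0)$ and dividing by $\phi(0)$ (the trivial solution $\phi\equiv 0$ being excluded), one obtains
$$
|\phi(0)|^{2\sigma} = \frac{1}{\cg_s^\om(0)}, \qquad \text{i.e.}\qquad |\phi(0)| = \cg_s^\om(0)^{-1/(2\sigma)}.
$$
Choosing the positive root $\phi(0) = \cg_s^\om(0)^{-1/(2\sigma)}$ yields $c = \cg_s^\om(0)^{-(1+1/(2\sigma))}$, which plugged back into the formula for $\hat\phi$ produces exactly the expression in Theorem \ref{r:1}. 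Inverting the Fourier transform gives the alternate form $\phi_\om(x) = \cg_s^\om(x)/\cg_s^\om(0)^{1+1/(2\sigma)}$.

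Finally, I would check that the candidate $\phi_\om$ actually satisfies \eqref{30} in the weak sense stipulated in the introduction. This is a direct Plancherel computation: for any test $\psi$,
$$
\dpr{(-\De)^{s/2}\phi_\om}{(-\De)^{s/2}\psi} + \om\dpr{\phi_\om}{\psi} = \int_{\rn}\bigl((2\pi|\xi|)^{2s}+\om\bigr)\hat\phi_\om(\xi)\,\overline{\hat\psi(\xi)}\,d\xi = c\int_{\rn}\overline{\hat\psi(\xi)}\,d\xi = c\,\overline{\psi(0)},
$$
and this matches the distributional pairing $|\phi_\om(0)|^{2\sigma}\phi_\om(0)\,\overline{\psi(0)}$ by the choice of $c$.

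There is no real obstacle here: the only subtle point is ensuring that $\phi(0)$ has pointwise meaning so that the $\delta_0$-coupling is well posed. That is precisely what the hypothesis $s>\tfrac{n}{2}$ guarantees, through $\hat\phi\in L^1(\rn)$ and $\phi\in H^s(\rn)\hookrightarrow C_0(\rn)$; the rest is algebra.
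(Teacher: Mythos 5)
Your proof is correct, and it takes a genuinely different route from the paper. You verify the explicit formula directly: Fourier transform the profile equation, observe that $\hat\phi$ must be a constant multiple of $\widehat{\cg_s^\om}$, fix the constant by the self-consistency condition $\phi(0)=c\,\cg_s^\om(0)$ with $c=|\phi(0)|^{2\si}\phi(0)$, and then confirm the weak formulation by Plancherel. All the analytic points you flag (integrability of $\widehat{\cg_s^\om}$, membership in $H^s$, continuity of $\phi$) do indeed reduce to $s>\f n2$, and your algebra recovering $|\phi(0)|^{2\si}=\cg_s^\om(0)^{-1}$ and hence $c=\cg_s^\om(0)^{-(1+\f{1}{2\si})}$ matches the stated formula. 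The paper instead reaches this formula through a variational construction: it minimizes the regularized functionals $I_{\om,N}$ with mollified nonlinearities $N^nV(Nx)$, extracts a limit as $N\to\infty$, and identifies the limit as $\cg_s^\om$ up to a constant before performing the same self-consistency computation at $x=0$ (Lemmas \ref{le:40} and \ref{le:60}). Your approach is shorter and entirely sufficient for the theorem as stated, which only asserts that the given function solves \eqref{30}; what the paper's longer route buys is the additional structural information exploited later, namely that $\cg_s^\om$ is an exact minimizer of the Sobolev embedding $H^s(\rn)\hookrightarrow L^\infty(\rn)$ with the sharp constant $c^2(\om)$ (Proposition \ref{r:4}), which in turn feeds directly into the spectral analysis of $\cl_\pm$ in Section \ref{sec:3.4}. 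Your observation that the Fourier-side identity forces $\hat\phi$ to be a multiple of $\widehat{\cg_s^\om}$ also essentially gives the classification of all continuous $H^s$ solutions, paralleling Lemma \ref{le:60}.
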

Interestingly, the conditions for $\om$ and $s$ in Theorem \ref{r:1} are necessary for the existence of solutions $\phi\in H^s(\rn)\cap C(\rn)$ of \eqref{30}. 
\begin{proposition}
	\label{r:2} Let $\phi\in H^s(\rn)\cap C(\rn)$ be a weak solution of \eqref{30}. Then, it must be that either $\om>0, s>\f n2$ or $\om<0, s<\f n2$. 
\end{proposition}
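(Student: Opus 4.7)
My plan is to Fourier-transform equation \eqref{30} to obtain an explicit formula for $\hat\phi$, and then determine for which $(\omega, s)$ that formula produces an element of $H^s(\rn)$.

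I would first Fourier-transform \eqref{30} (a genuine identity in $\mathcal{S}'(\rn)$, since $\phi\in L^2$ and $\delta_0$ is tempered) and use $\widehat{\delta_0}=1$ to reduce to the algebraic identity
\begin{equation*}
\bigl((2\pi|\xi|)^{2s}+\omega\bigr)\hat\phi(\xi) \;=\; c, \qquad c \;:=\; |\phi(0)|^{2\sigma}\phi(0).
\end{equation*}
Observe next that $c=0$ forces only the trivial solution: together with $\hat\phi\in L^2(\rn)$ and the fact that the zero set of $(2\pi|\xi|)^{2s}+\omega$ has Lebesgue measure zero (for any sign of $\omega$), the relation gives $\hat\phi=0$ a.e., hence $\phi\equiv 0$. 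So one may assume $c\neq 0$; the same measure-zero property then yields the pointwise identity
\begin{equation*}
\hat\phi(\xi) \;=\; \frac{c}{(2\pi|\xi|)^{2s}+\omega} \quad \text{for a.e.\ } \xi\in\rn.
\end{equation*}

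The proof then reduces to identifying the pairs $(\omega,s)$ for which this explicit $\hat\phi$ belongs to $L^2\bigl((1+|\xi|^{2s})\,d\xi\bigr)$. I would split into three cases. If $\omega>0$, the denominator is uniformly positive and grows like $|\xi|^{2s}$, so the integrand $\frac{1+|\xi|^{2s}}{((2\pi|\xi|)^{2s}+\omega)^2}$ is bounded near the origin and decays like $|\xi|^{-2s}$ at infinity, which is integrable precisely when $s>n/2$. If $\omega=0$, the integrand behaves like $|\xi|^{-4s}$ near zero and like $|\xi|^{-2s}$ at infinity, giving the incompatible requirements $4s<n$ and $2s>n$; this case is impossible. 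Finally, if $\omega<0$, a Taylor expansion in the radial variable shows that $(2\pi|\xi|)^{2s}+\omega$ vanishes to first order along the sphere of radius $r_0:=(-\omega)^{1/(2s)}/(2\pi)$, so the integrand has a nonintegrable $(|\xi|-r_0)^{-2}$ blow-up along a codimension-one submanifold, contradicting $\hat\phi\in L^2$. Combining the three cases yields the sign dichotomy claimed in the proposition.

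The main technical subtlety lies in the passage from the distributional identity $((2\pi|\xi|)^{2s}+\omega)\hat\phi=c$ to the pointwise a.e.\ formula for $\hat\phi$. This rests on the elementary observation that if $u\in L^2(\rn)$ satisfies $fu=0$ a.e.\ with $f$ continuous and $\{f=0\}$ of Lebesgue measure zero, then $u=0$ a.e.; once this step is in hand, the remaining integrability analysis is routine.
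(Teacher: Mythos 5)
Your argument is correct, and it takes a genuinely different route from the paper. The paper proves Proposition \ref{r:2} via Pohozaev-type identities (Proposition \ref{prop:10}): testing \eqref{30} with $\phi$ and with $x\cdot\nabla\Psi$, using the commutator $[(-\De)^s, x\cdot\nabla]=2s(-\De)^s$ and the observation $\dpr{\de_0}{x\cdot\nabla\Psi}=0$, it derives $\|\phi\|_{L^2}^2=\f{2s-n}{2s\om}|\phi(0)|^{2\si+2}$ and $\|(-\De)^{\f s2}\phi\|_{L^2}^2=\f{n}{2s}|\phi(0)|^{2\si+2}$, and the dichotomy then follows from positivity of the norms. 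You instead solve the equation outright on the Fourier side: the weak formulation gives $\int\bigl[((2\pi|\xi|)^{2s}+\om)\hat\phi(\xi)-c\bigr]\overline{\hat\psi(\xi)}\,d\xi=0$ for all $\psi\in\cs$, the bracketed function is locally integrable, hence vanishes a.e., and division off the measure-zero zero set of the symbol pins down $\hat\phi$ explicitly; the conclusion is then an integrability check. Your route actually yields a strictly stronger statement than the proposition: the cases $\om=0$ and $\om<0$ admit no nontrivial solutions at all (for $\om<0$ the candidate $\hat\phi$ has a nonintegrable $(|\xi|-r_0)^{-2}$ singularity on a sphere, so that branch of the stated dichotomy is in fact vacuous within $H^s(\rn)\cap C(\rn)$), and in the surviving case $\om>0$, $s>\f n2$ you recover the explicit profile that the paper only derives later (Theorem \ref{r:1}, Lemma \ref{le:60}). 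What the paper's softer Pohozaev argument buys in exchange is the pair of quantitative identities \eqref{40}--\eqref{42} relating the norms of $\phi$ to $|\phi(0)|$, and a method that does not rely on the equation being explicitly diagonalized by the Fourier transform. The one technical point you should make explicit when writing this up is the one you already flag: for noninteger $2s$ the symbol $(2\pi|\xi|)^{2s}$ is not smooth at the origin, so the product with a general tempered distribution is undefined; here it is legitimate because $\hat\phi\in L^2$ is a function and the pairing $\int(2\pi|\xi|)^{2s}\hat\phi\,\overline{\hat\psi}\,d\xi=\int\bigl((2\pi|\xi|)^{s}\hat\phi\bigr)\bigl((2\pi|\xi|)^{s}\overline{\hat\psi}\bigr)d\xi$ converges absolutely for $\phi\in H^s$ and $\psi\in\cs$.
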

\noindent The proof of Proposition \ref{r:2} proceeds via the Pohozaev's identities, see Section \ref{sec:2.2} below. 

In the process of the variational construction of the waves $\phi_\om$, we establish a non-surprising connection to the problem for the optimal constant in the Sobolev embedding $H^s(\rn)\hookrightarrow L^\infty(\rn)$. More specifically, we establish that $\cg_s=\cg_s^1$ (and consequently $\phi_1$) are  $H^s$ functions that saturate the Sobolev embedding, with the optimal Sobolev constant 
\begin{equation}
\label{s:12} 
2^{n-1} \pi^{\f{n}{2}-1} \Ga\left(\f{n}{2}\right)\sin\left(\f{n\pi}{2s}\right)  \|u\|_{L^\infty}^2\leq \|(-\De)^{\f s2} u\|_{L^2}^2+ \|u\|_{L^2}^2.
\end{equation}
We formulate the result in the following proposition. 
\begin{proposition}
	\label{r:4} 
	The function $\cg_s$ is a solution to the Sobolev embedding minimization problem 
	$$
	 \inf_{u\in \cs: u\neq 0} \f{\|(-\De)^{\f{s}{2}} u\|_{L^2}^2+ 
	 	\|u\|_{L^2}^2}{\|u\|_{L^\infty}^2}= 2^{n-1} \pi^{\f{n}{2}-1} \Ga\left(\f{n}{2}\right)\sin\left(\f{n\pi}{2s}\right)
	$$
\end{proposition}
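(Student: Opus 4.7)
My plan is to recognize the minimization as a sharp Cauchy-Schwarz inequality on the Fourier side, with $\cg_s$ playing the role of the extremizer. Writing $u\in\cs$, the Fourier inversion formula yields the trivial pointwise bound $\|u\|_{L^\infty}\geq |u(0)|=\bigl|\int_{\rn}\hat u(\xi)\,d\xi\bigr|$, while Plancherel gives
$$\|(-\De)^{s/2}u\|_{L^2}^2+\|u\|_{L^2}^2=\int_{\rn}\bigl((2\pi|\xi|)^{2s}+1\bigr)|\hat u(\xi)|^2\,d\xi.$$
Thus a lower bound for the ratio will follow once one relates $|u(0)|^2$ to this Fourier-side quadratic form.

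The required estimate is just Cauchy-Schwarz with weight $\sqrt{(2\pi|\xi|)^{2s}+1}$:
$$|u(0)|^2\leq \left(\int_{\rn}\bigl((2\pi|\xi|)^{2s}+1\bigr)|\hat u(\xi)|^2\,d\xi\right)\left(\int_{\rn}\f{d\xi}{(2\pi|\xi|)^{2s}+1}\right).$$
The hypothesis $s>\f n2$ is precisely what makes the second factor finite. The equality case forces $\hat u(\xi)$ to be a scalar multiple of $\bigl((2\pi|\xi|)^{2s}+1\bigr)^{-1}=\widehat{\cg_s}(\xi)$, so any extremizer must be a constant multiple of $\cg_s$.

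Two items remain. First, to upgrade the Fourier-side bound on $|u(0)|^2$ to a bound on $\|u\|_{L^\infty}^2$ for the extremal configuration, one must verify $\|\cg_s\|_{L^\infty}=\cg_s(0)$. This follows from $\widehat{\cg_s}\geq 0$: Fourier inversion gives $|\cg_s(x)|\leq \int\widehat{\cg_s}(\xi)\,d\xi=\cg_s(0)$. Second, one evaluates the weight integral via polar coordinates and the substitution $t=r^{2s}$, reducing it to the classical beta-function identity $\int_0^\infty t^{n/(2s)-1}/(t+1)\,dt=\pi/\sin(n\pi/(2s))$, which converges exactly when $s>\f n2$. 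Combining with $|\sn|=2\pi^{n/2}/\Ga(n/2)$ and taking the reciprocal yields the claimed sharp constant.

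The main conceptual obstacle is that Cauchy-Schwarz a priori only bounds $|u(0)|^2$ from above, not $\|u\|_{L^\infty}^2$, so the constant extracted this way is in principle only a lower bound for the stated infimum; what rescues us is the non-negativity of $\widehat{\cg_s}$, which forces the extremal $\cg_s$ to attain its supremum precisely at the origin and thereby closes the loop. Everything else reduces to a change-of-variables computation.
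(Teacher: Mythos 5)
Your strategy --- weighted Cauchy--Schwarz on the Fourier side --- is genuinely different from the paper's proof, which never uses Cauchy--Schwarz at all: the paper builds minimizers of the mollified functionals $I_{\om,N}$ (Proposition \ref{prop:22}), proves uniform bounds and compactness (Lemma \ref{le:2}), shows $m_N(\om)\to c^2(\om)$ (Lemma \ref{le:20}), and identifies the limit with $\cg_s^\om$ via the Euler--Lagrange equation (Lemma \ref{le:40}). Your route is shorter, but as written it does not close. The chain $\|u\|_{L^\infty}\ge|u(0)|$ together with $|u(0)|^2\le\bigl(\int((2\pi|\xi|)^{2s}+1)|\wh u|^2 d\xi\bigr)\bigl(\int((2\pi|\xi|)^{2s}+1)^{-1}d\xi\bigr)$ only bounds $\inf_u\, Q(u)/|u(0)|^2$ from below, where $Q(u)=\|(-\De)^{s/2}u\|_{L^2}^2+\|u\|_{L^2}^2$; since $\|u\|_{L^\infty}\ge|u(0)|$ points the wrong way, this gives no lower bound on $Q(u)/\|u\|_{L^\infty}^2$ for general $u$, and the non-negativity of $\wh{\cg_s}$ repairs matters only for the extremizer, i.e.\ it shows the infimum is $\le$ the Cauchy--Schwarz constant, not $\ge$ it. The missing step is $Q(u)\ge c^2\|u\|_{L^\infty}^2$ for \emph{every} $u$. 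Fortunately it is one line: your Cauchy--Schwarz estimate applies verbatim to $u(x)=\int\wh u(\xi)e^{2\pi i x\cdot\xi}d\xi$ for every $x$, because $|e^{2\pi i x\cdot\xi}|=1$; equivalently $\|u\|_{L^\infty}\le\|\wh u\|_{L^1}$ and then Cauchy--Schwarz on $\|\wh u\|_{L^1}$. (The paper achieves the same reduction by translation invariance of the numerator, in the remark just before Lemma \ref{le:20}.) With that line inserted, sharpness at the origin via $\wh{\cg_s}\ge 0$ does finish the identification of the extremizer.

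On the constant: you assert that the substitution $t=r^{2s}$ ``yields the claimed sharp constant,'' but carrying it out produces a Jacobian factor $\f{1}{2s}$, giving
$$
\int_{\rn}\f{d\xi}{(2\pi|\xi|)^{2s}+1}=\f{|\sn|}{(2\pi)^n}\cdot\f{1}{2s}\cdot\f{\pi}{\sin\left(\f{n\pi}{2s}\right)},
$$
whose reciprocal is $2^{n}s\,\pi^{\f n2-1}\Ga\left(\f n2\right)\sin\left(\f{n\pi}{2s}\right)$, i.e.\ $2s$ times the displayed constant. A sanity check at $n=s=1$: the extremizer is $\cg_1(x)=\f12 e^{-|x|}$ and $\inf\,(\|u'\|_{L^2}^2+\|u\|_{L^2}^2)/\|u\|_{L^\infty}^2=2$, whereas the displayed formula evaluates to $1$. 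So either you silently dropped the Jacobian or you did not actually perform the computation; note that the same factor $\f{1}{2s}$ is missing from the first line of the paper's Appendix computation (Proposition \ref{int}, formula \eqref{b:12}), which is where the displayed constant originates, so the discrepancy should be flagged rather than reproduced.
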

Next, we turn our attention  towards the stability results. We first state spectral stability/instability  result, followed by orbital stability statements. 
\subsubsection{Stability characterization of the waves $\phi_\om$}
\begin{theorem}
	\label{theo:12} 
	Let $n\geq 1$, $s>\f n2$ and $\om>0$. Then, the waves $e^{i \om t} \phi_\om$ are spectrally stable if and only if 
	$$
	0<\si<\f{2s}{n}-1.
	$$
	That is, the waves are stable for all   $0<\si<\f{2s}{n}-1$ and unstable, when $\si>\f{2s}{n}-1$. Moreover, the instability is due to a presence of a single and simple 
	real mode in the eigenvalue problem \eqref{414}. 
\end{theorem}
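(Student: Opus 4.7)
The plan is to run a Vakhitov--Kolokolov-type reduction: I show that the spectral problem $\cj\cl\vec v=\la\vec v$ has a nontrivial solution with $\Re\la>0$ if and only if the slope $\f{d}{d\om}\|\phi_\om\|_{\lt}^2$ is negative, and that in the unstable regime the instability comes from a single simple real eigenvalue. The two inputs I need are the Morse index and kernel of $\cl_\pm$, together with an explicit computation of the slope, both of which are tractable thanks to the rank-one nature of the delta perturbation and the scaling invariance of $\phi_\om$.

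For the first input, I would compute the discrete spectrum of $\cl_\pm=(-\De)^s+\om-\ka_\pm\de_0$ below the essential spectrum $[\om,\infty)$, where $\ka_-=|\phi(0)|^{2\si}$ and $\ka_+=(2\si+1)|\phi(0)|^{2\si}$. Any $H^s$-eigenvector $u$ with eigenvalue $\mu<\om$ satisfies $((-\De)^s+\om-\mu)u=\ka_\pm u(0)\de_0$ weakly, so on the Fourier side $u=\ka_\pm u(0)\cg_s^{\om-\mu}$; since $s>\f n2$ makes $u\mapsto u(0)$ continuous on $H^s(\rn)$, the consistency condition collapses to the scalar equation $\ka_\pm\cg_s^{\om-\mu}(0)=1$. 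The function $\la\mapsto\cg_s^\la(0)=\int_{\rn}\f{d\xi}{(2\pi|\xi|)^{2s}+\la}$ is strictly decreasing, and the profile equation of Theorem \ref{r:1} is exactly $\ka_-\cg_s^\om(0)=1$; therefore $\cl_-$ has a single simple eigenvalue $\mu=0$ below $\om$ with $\ker\cl_-=\text{span}(\phi)$ and $\cl_-\geq 0$, while the larger coupling $\ka_+=(2\si+1)\ka_-$ forces a unique $\mu_0<0$ for $\cl_+$, so $n(\cl_+)=1$ and $\ker\cl_+=\{0\}$.

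Next, the eigenvalue problem decouples as $\cl_+\cl_- v_1=-\la^2 v_1$ with $v_1\perp\phi$ (whenever $\la\neq 0$), and after the symmetric substitution $w=\cl_-^{1/2}v_1$ it becomes $Kw=-\la^2 w$ on $\{\phi\}^\perp$ with $K:=\cl_-^{1/2}\cl_+\cl_-^{1/2}$; the real nonzero $\la$ then correspond to negative eigenvalues of $K$, and a change of variables $v=\cl_-^{1/2}w$ gives $n(K|_{\{\phi\}^\perp})=n(\cl_+|_{\{\phi\}^\perp})$. Differentiating the profile equation in $\om$ yields $\cl_+\partial_\om\phi=-\phi$, hence
$$
\langle\cl_+^{-1}\phi,\phi\rangle=-\langle\partial_\om\phi,\phi\rangle=-\tfrac12\tfrac{d}{d\om}\|\phi_\om\|_{\lt}^2,
$$
and the Haynsworth inertia formula gives $n(\cl_+|_{\{\phi\}^\perp})=0$ iff the slope is positive, and $=1$ (single simple positive real eigenvalue) otherwise. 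The slope itself is explicit: by the scaling $\cg_s^\om(x)=\om^{\f n{2s}-1}\cg_s(\om^{1/(2s)}x)$ together with $\phi_\om=\cg_s^\om/(\cg_s^\om(0))^{1+1/(2\si)}$, one finds $\|\phi_\om\|_{\lt}^2=C\,\om^\al$ with $\al=\f1\si(1-\f n{2s})-\f n{2s}$, and $\al>0\iff\si<\f{2s}{n}-1$, matching the theorem.

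The main obstacle is the first step: to make the rank-one Birman--Schwinger calculation rigorous, I must work within the form-domain framework of $\cl_\pm$ from Section \ref{sec:2.3} and verify that the discrete spectrum below $[\om,\infty)$ is exhausted by the solutions of $\ka_\pm\cg_s^{\om-\mu}(0)=1$. Once this spectral picture is secured, the symmetric reduction and the inertia index count are standard, and the dichotomy is forced by elementary scaling; note that the classical Grillakis--Shatah--Strauss / \cite{KP} machinery does not apply directly here because of the singular delta nonlinearity, which is why (as the paper already signals) orbital stability will need to be argued separately by a Benjamin-type contradiction argument in the stable regime.
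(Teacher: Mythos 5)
Your proposal is correct and reproduces the theorem, but the middle and final steps follow a genuinely different route from the paper. The spectral input is the same: your rank-one/Birman--Schwinger reduction to the scalar condition $\ka_\pm\,\cg_s^{\om-\mu}(0)=1$ is exactly the paper's Proposition \ref{prop:23} and Corollary \ref{cor:2}, giving $n(\cl_-)=0$, $Ker(\cl_-)=span\{\phi_\om\}$, $n(\cl_+)=1$, $Ker(\cl_+)=\{0\}$. From there the paper simply invokes the Hamiltonian index formula \eqref{se:10} of \cite{KKS, KKS2, KP} to land on the Vakhitov--Kolokolov condition $\dpr{\cl_+^{-1}\phi_\om}{\phi_\om}<0$, whereas you rederive it by hand via the quadratic pencil $\cl_+\cl_- v_1=-\la^2 v_1$, the symmetrization $K=\cl_-^{1/2}\cl_+\cl_-^{1/2}$ on $\{\phi_\om\}^\perp$, and the inertia (Haynsworth) count $n(\cl_+|_{\{\phi_\om\}^\perp})=n(\cl_+)-\mathbf{1}_{\{\dpr{\cl_+^{-1}\phi_\om}{\phi_\om}<0\}}$; this buys you self-containedness (and the self-adjointness of $K$ automatically rules out complex unstable modes, since $\la^2$ must be real), at the cost of having to justify the congruence argument on the unbounded setting and, in the converse direction, the construction of a genuine eigenvector from a negative direction of $K$ --- note that $\cl_+\cl_-^{1/2}u=-\la^2\cl_-^{-1/2}u+c\,\phi_\om$ only modulo $Ker(\cl_-)$, so you must correct $v_1$ by a multiple of $\phi_\om$ to produce the actual unstable mode; this is standard but should be spelled out. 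The second genuine difference is the evaluation of the VK quantity: the paper explicitly solves $\cl_+\psi=\cg_s^\om$ through the domain representation \eqref{70}--\eqref{80} and evaluates three integrals by residues (Proposition \ref{int}), while you use $\cl_+\partial_\om\phi_\om=-\phi_\om$ together with the scaling law $\|\phi_\om\|_{L^2}^2=C\om^{\al}$, $\al=\f{1}{\si}\left(1-\f{n}{2s}\right)-\f{n}{2s}$, which is shorter and bypasses the appendix entirely (your exponent agrees with what the Pohozaev identity \eqref{40} gives). The only points to nail down there are that $\partial_\om\phi_\om\in D(\cl_+)$ and that differentiating the singular profile equation in $\om$ is legitimate; both are immediate from the explicit Fourier formula \eqref{250}, so I see no genuine gap.
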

Finally, before we state our orbital stability results, we need to make some natural assumptions regarding the well-posedness of the Cauchy problem \eqref{20}. 

Clearly, the orbital stability is only expected to hold for the case $\si<\f{2s}{n}-1$, so we assume that henceforth. We make the following  \textbf{key assumptions}:
\begin{enumerate}
	\item  The solution map $g\to u_g$ has {\bf continuous dependence on initial data property in a neighborhood of $\phi$}. That is,  there exists $T_0>0$, so that for all $\eps>0$, there exists $\de>0$, so that whenever  $g: \|g- \phi\|_{H^s}<\de$, then 
	$\sup_{0<t<T_0} \|u_g(t, \cdot)- e^{- i \om t} \phi_\om\|_{H^s}<\eps$.  
	\item  {\bf All initial data, sufficiently close to $\phi_\om$ in $H^s$ norm, generates a global in time solution} $u_g$ of \eqref{20}. 
In addition,  the $L^2$ norm and the Hamiltonian for these solutions are conserved. That is 
	$$
	M[u_g(t)]=M[g], E[u_g(t)]=E[g].
	$$
\end{enumerate}
First, let us mention that this exact  result is already available in the one dimensional case $n=1$, \cite{CFT}.   For dimensions higher than one, $n\geq 2$, we conjecture  that this is also the case. That is,  in parallel with the results for the standard semi-linear Schr\"odinger equation, we make the following conjecture. 
\begin{Conjecture}
For $s>\frac n2,$ $u_0\in D(\cl_c)$ there exists $T>0$ such that \eqref{20} is locally well-posed and \eqref{conserve} are conserved up to an eventual blow-up time. In addition,  if  $0<\sigma<\frac{2s}{n} -1$,  the solutions are  global, whereas for $\sigma\geq \f{2s}{n}-1$, finite time blow-up is possible, for some initial data. 
\end{Conjecture}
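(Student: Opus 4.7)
The plan is to treat the four assertions in sequence: local well-posedness in an appropriate domain, conservation of mass and energy, global existence in the subcritical regime $\si<\f{2s}{n}-1$, and finite time blow-up in the critical/supercritical regime $\si\geq\f{2s}{n}-1$. The first three should be accessible by adapting existing machinery; the last is the main obstacle and is the reason the statement appears as a conjecture.

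For \emph{local well-posedness}, the natural working space is $H^s(\rn)$ with $s>\f n2$, where the Sobolev embedding $H^s\hookrightarrow C\cap\li$ (quantified sharply in Proposition \ref{r:4}) renders the point evaluation $u(t,0)$ well defined and continuous. I would run a contraction on the Duhamel form
\[
u(t)=e^{-it(-\De)^s}u_0+i\int_0^t e^{-i(t-\tau)(-\De)^s}\bigl(|u(\tau,0)|^{2\si}u(\tau,0)\de_0\bigr)\,d\tau,
\]
the key technical input being mapping properties of $e^{-it(-\De)^s}\de_0$ in spaces tuned to the fractional dispersion. The one-dimensional case is carried out in \cite{CFT}; in higher dimensions one would couple standard Strichartz-type estimates for $(-\De)^s$ with the observation that the $\de_0$ source acts only through the trace at the origin, which is bounded on $H^s$ for $s>\f n2$. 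The domain $\cl_c$ would then be identified with the natural self-adjoint realization of $(-\De)^s$ with point interaction by standard form methods, in line with \cite{CFT}.

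The \emph{conservation laws} follow by differentiating $M$ and $E$ along smooth solutions using the weak formulation of \eqref{20} to handle the $\de_0$ interaction, and then extending by density together with continuous dependence in $H^s$. For \emph{global existence} when $0<\si<\f{2s}{n}-1$, the key ingredient is the sharp Gagliardo-Nirenberg-type interpolation
\[
|u(0)|^{2\si+2}\leq C\,\|(-\De)^{\f s2}u\|_{L^2}^{(2\si+2)\f{n}{2s}}\,\|u\|_{L^2}^{(2\si+2)(1-\f{n}{2s})},
\]
obtained by optimizing the dilation $u(x)\mapsto u(\la x)$ in the sharp Sobolev embedding of Proposition \ref{r:4}. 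The exponent $(2\si+2)\f{n}{2s}$ is strictly less than $2$ precisely in the subcritical range, so Young's inequality applied to $E(u)$ together with $L^2$ mass conservation yields a uniform bound on $\|(-\De)^{\f s2}u\|_{L^2}$; the blow-up alternative from the local theory then delivers globality.

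The truly hard point is \emph{finite-time blow-up} when $\si\geq\f{2s}{n}-1$. The classical template is a virial identity, computing $\f{d^2}{dt^2}\int|x|^2|u|^2\,dx$ and forcing the variance to vanish from negative-energy data, but here two obstructions appear. First, the commutator $[|x|^2,(-\De)^s]$ is non-local for $s\neq 1$ and cannot be evaluated as cleanly as in the local Laplacian case, requiring careful fractional calculus. Second, and more seriously, the concentrated nonlinearity contributes nothing to the virial since $|x|^2\de_0=0$, removing the standard source of concavity. A possible workaround is a localized or weighted virial that couples the spectral instability of Theorem \ref{theo:12} to a Merle-Rapha\"el-type concentration analysis at the threshold $\si=\f{2s}{n}-1$, together with an appropriate modulation equation around the profile $\phi_\om$. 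Making this rigorous in the non-local, concentrated setting is the essential obstacle and is precisely what leaves the statement at the level of a conjecture.
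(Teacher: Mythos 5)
The statement you are addressing is presented in the paper as a \emph{Conjecture}: the authors do not prove it, they cite \cite{CFT} for the one--dimensional case only, and for $n\geq 2$ they simply \emph{assume} its content (continuous dependence, globality near $\phi_\om$, conservation of $M$ and $E$) as hypotheses for the orbital stability theorem. So there is no proof in the paper to compare against, and your text is likewise a programme rather than a proof --- which, to your credit, you acknowledge. Within that programme, the global-existence step is the one part that is essentially complete modulo the local theory: your interpolation inequality does follow from Proposition \ref{r:4} by optimizing over $\om$ (since $c^2(\om)\sim\om^{1-\f{n}{2s}}$), and the exponent $(2\si+2)\f{n}{2s}<2$ is exactly the condition $\si<\f{2s}{n}-1$, so Young's inequality plus mass conservation bounds $\|(-\De)^{\f s2}u\|_{L^2}$ uniformly. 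The conservation-law step by density is also standard once a local theory with continuous dependence exists.

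The genuine gaps are in the two remaining steps. For local well-posedness, a direct contraction on the Duhamel formula in $C([0,T];H^s)$ cannot work as written: the integrand $e^{-i(t-\tau)(-\De)^s}\bigl(|u(\tau,0)|^{2\si}u(\tau,0)\de_0\bigr)$ is, pointwise in $\tau$, a distribution of regularity $H^{-\f n2-\eps}$ only, and the free propagator does not improve Sobolev regularity; the gain must come from time integration. This is why \cite{CFT} does not run a Strichartz fixed point but instead reduces the problem to a scalar Volterra (``charge'') equation for $q(t)=u(t,0)$ driven by the kernel $t\mapsto\bigl(e^{-it(-\De)^s}\de_0\bigr)(0)\sim t^{-\f{n}{2s}}$; for $n\geq2$ this kernel is more singular and the requisite Volterra/Abel theory, together with the recovery of $u\in C([0,T];H^s)$ from $q$, is precisely what is open. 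Saying one would ``couple standard Strichartz estimates with the trace bound'' does not close this. For blow-up, your diagnosis is right that the virial route degenerates (the point interaction sits at $x=0$ where the weight $|x|^2$ vanishes, and $[|x|^2,(-\De)^s]$ is nonlocal), but note that \cite{ADFT2} does obtain blow-up for the concentrated NLS with $s=1$, $n=3$ (outside the regime $s>\f n2$), so the mechanism is not hopeless --- it is simply not supplied here. In sum, the proposal is a reasonable roadmap consistent with the literature the paper cites, but it does not constitute a proof, and the statement remains, as the paper says, a conjecture.
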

We are now ready to state our orbital stability results. 
\begin{theorem}
	\label{theo:20} 
	Let  $n\geq 1$, $\om>0$, $s>\f{n}{2}$, $0<\si<\f{2s}{n}-1$. In addition, assume     continuous dependence on initial data and globality of the solutions close to $\phi_\om$, as outlined above.  Then, the solitons $e^{i \om t} \phi_\om$ is orbitally stable. 
\end{theorem}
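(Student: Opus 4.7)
The plan is to follow the classical Weinstein/Benjamin Lyapunov functional argument by contradiction, exploiting the spectral information in Theorem~\ref{theo:12} and the non-degeneracy of $\cl_\pm$ established earlier in the paper. Suppose $e^{i\om t}\phi_\om$ is not orbitally stable: then there exist $\eps_0>0$ and initial data $g_n\to\phi_\om$ in $H^s(\rn)$ with $\sup_{t>0}\inf_\theta\|u_{g_n}(t,\cdot)-e^{-i(\om t+\theta)}\phi_\om\|_{H^s}\geq 2\eps_0$. Using assumption~(1) and continuity in $t$, I would pick the first $t_n>0$ at which this distance equals $\eps_0$, and set $v_n(x):=e^{i\om t_n}u_{g_n}(t_n,x)$, so that $\inf_\theta\|v_n-e^{i\theta}\phi_\om\|_{H^s}=\eps_0$. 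Assumption~(2) together with $g_n\to\phi_\om$ in $H^s$ yields $M(v_n)\to M(\phi_\om)$ and $E(v_n)\to E(\phi_\om)$, hence $\La(v_n)\to\La(\phi_\om)$, where $\La(u):=E(u)+\om M(u)$ is the natural Lyapunov functional.

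Next, implement a one-parameter modulation: choose $\theta_n$ minimizing $\|v_n-e^{i\theta}\phi_\om\|_{L^2}$ and write $v_n=e^{i\theta_n}(\phi_\om+w_n)$, $w_n=w_n^1+iw_n^2$, with $\|w_n\|_{H^s}=\eps_0$. The Euler--Lagrange condition for $\theta_n$ provides the orthogonality $\dpr{w_n^2}{\phi_\om}=0$. From $M(v_n)-M(\phi_\om)=2\dpr{w_n^1}{\phi_\om}+\|w_n\|_{L^2}^2\to 0$ one obtains $\dpr{w_n^1}{\phi_\om}=O(\eps_0^2)+o_n(1)$, so splitting $w_n^1=\alpha_n\phi_\om+\tilde w_n^1$ with $\dpr{\tilde w_n^1}{\phi_\om}=0$ gives $\alpha_n=O(\eps_0^2)+o_n(1)$.

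Using the profile equation \eqref{30}, the linear term in the Taylor expansion of $\La$ at $\phi_\om$ vanishes, producing
\begin{equation*}
\La(v_n)-\La(\phi_\om)=\tfrac12\dpr{\cl_+w_n^1}{w_n^1}+\tfrac12\dpr{\cl_-w_n^2}{w_n^2}+R_n,
\end{equation*}
where $R_n$ is a polynomial in $w_n(0)$ of degree at least three. Since $s>\tfrac n2$, point evaluation $u\mapsto u(0)$ is a bounded linear functional on $H^s(\rn)$, so $|R_n|\lesssim\|w_n\|_{H^s}^3=O(\eps_0^3)$, while the left-hand side is $o_n(1)$ by the preceding step.

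The main remaining task, and the heart of the proof, is coercivity of the quadratic form. The non-degeneracy result of the paper gives $\ker\cl_-=\operatorname{span}\{\phi_\om\}$ and $\ker\cl_+=\{0\}$; since $\cl_-\geq 0$, the orthogonality $\dpr{w_n^2}{\phi_\om}=0$ yields $\dpr{\cl_-w_n^2}{w_n^2}\geq c\|w_n^2\|_{H^s}^2$. For $\cl_+$, Theorem~\ref{theo:12} tells us that in the stable range $0<\si<\tfrac{2s}{n}-1$ the operator has Morse index exactly one, and the explicit formula of Theorem~\ref{r:1} allows direct verification of the Vakhitov--Kolokolov slope condition $\tfrac{d}{d\om}\|\phi_\om\|_{L^2}^2>0$; this is equivalent to $\dpr{\cl_+^{-1}\phi_\om}{\phi_\om}<0$, and the standard Weinstein/Lagrange-multiplier argument then supplies $\dpr{\cl_+\tilde w_n^1}{\tilde w_n^1}\geq c\|\tilde w_n^1\|_{H^s}^2$ on $\{\tilde w_n^1\perp\phi_\om\}$. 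The cross contributions $2\alpha_n\dpr{\cl_+\phi_\om}{\tilde w_n^1}$ and $\alpha_n^2\dpr{\cl_+\phi_\om}{\phi_\om}$ are $O(\eps_0^3)+o_n(1)$, hence absorbable on the left. Combining, $c\eps_0^2\leq O(\eps_0^3)+o_n(1)$, a contradiction for $\eps_0$ small and $n$ large. The hardest step is precisely this coercivity bound: it bundles together the non-degeneracy of $\cl_+$, the Morse-index-one count, and the sign of $\partial_\om\|\phi_\om\|_{L^2}^2$, each of which must be drawn from independent inputs established earlier in the paper.
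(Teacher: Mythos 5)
Your proposal follows essentially the same route as the paper's proof (Proposition \ref{prop:11}): contradiction, a one-parameter phase modulation giving $\dpr{\Im w}{\phi_\om}=0$, control of the $\phi_\om$-component of the real part via mass conservation, Taylor expansion of the conserved Lyapunov functional, and coercivity of $\cl_+$ on $\{\phi_\om\}^\perp$ (the paper's Proposition \ref{vk}) together with $\cl_-\geq 0$ with kernel spanned by $\phi_\om$. Two small corrections are needed: with the paper's normalization $M(u)=\int|u|^2$ the Lyapunov functional must be $E+\tfrac{\om}{2}M$ (not $E+\om M$), or the linear term in the expansion does not cancel; and the remainder is not cubic for $\si<\tfrac12$ --- it is only $O(|w(0)|^{\min(2\si+2,3)})$, so you should bound it by $o(\eps_0^2)$ as the paper does via the exponent $\min(2\si,1)$, which still closes the argument. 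Your derivation of the Vakhitov--Kolokolov sign from $\partial_\om\|\phi_\om\|_{L^2}^2>0$ (using $\cl_+\partial_\om\phi_\om=-\phi_\om$) is a legitimate alternative to the paper's direct residue computation of $\dpr{\cl_+^{-1}\phi_\om}{\phi_\om}$ and, given the explicit profile, yields the same threshold $\si<\tfrac{2s}{n}-1$.
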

We plan our paper as follows. In Section \ref{sec:2}, we prove the Pohozaev's identities, which in turn imply the necessary conditions for existence of the waves. Then, we discuss a  self-adjoint realization of the operators $(-\De)^s+\la- c \de_0$ for $\la>0, c>0$.  

In Section \ref{sec:3}, we first provide a variational construction of the waves $\phi_\om$. 
The special relation to the Sobolev embedding $H^s(\rn)\hookrightarrow L^\infty(\rn), s>\f n2$ is highlighted. The precise results are stated in the explicit formulas in Proposition \ref{r:4}. Finally, in Section \ref{sec:3.4}, we discuss the lower part of the spectrum for operators in the form $(-\De)^s+\la - \mu\de_0$. In the particular case of the linearized operator  $\cl_+$, this yields the non-degeneracy of the waves, which in this case takes the form $Ker(\cl_+)=\{0\}$, due to the broken translational symmetry. 

In Section \ref{sec:4}, we start with  a short introduction to the instability index count theory in general, and then we apply it to the spectral stability of the waves $\phi_\om$. We explicitly calculate the relevant Vakhitov-Kolokolov quantity $\dpr{\cl_+^{-1} \phi_\om}{\phi_\om}$, which provides the stability characterization of the waves described in Theorem \ref{theo:12}. Finally, under the necessary and sufficient condition for spectral stability,  $\dpr{\cl_+^{-1} \phi_\om}{\phi_\om}<0$, we   derive the  coercivity of $\cl_+$ on $\{\phi_\om\}^\perp$, which is of course crucial in the proof of the orbital stability.

\section{Preliminaries} 
\label{sec:2}
We use the standard notations for the $L^p, 1\leq p\leq \infty$ spaces. 
The Sobolev norms  $\|\cdot\|_{W^{s,p}}$ are given by
$$
\|f\|_{\dot{W}^{s,p}}=\|(-\De)^{\f{s}{2}} f\|_{L^p}; \ \ \|f\|_{W^{s,p}}=\|(-\De)^{\f{s}{2}} f\|_{L^p}+\|f\|_{L^p}, 1<p<\infty,
$$
while the corresponding spaces are the completions of Schwartz functions 
$\cs$ in these norms. 

Of particular importance will be the Sobolev embedding, $\dot{W}^{s,p}(\rn)\hookrightarrow  L^q(\rn)$, for $1<p<q<\infty: s\geq n\left(\f{1}{p}-\f{1}{q}\right)$. Also,  recall that for $s>\f{n}{p}$, there is the embedding\footnote{Here $\{x\}=x-[x]$, where  $[x]=\max\{n: n\leq x\}$}  $W^{s,p}\hookrightarrow 
C^{[s-\f np],\ga}(\rn): 0<\ga<s-\f{n}{p}$. As is well-known, the embedding  $H^{\f n2}(\rn)\hookrightarrow L^\infty(\rn)$ fails, but sometimes and   useful replacement estimate   is the following for all $\de\in (0,\f n2)$, 
\begin{equation}
\label{017}
\|f\|_{L^\infty} \leq C_\de (\|f\|_{\dot{H}^{\f n2 - \de}}+ \|f\|_{\dot{H}^{\f n2 +\de}}). 
\end{equation}
\subsection{Pohozaev's identities and consequences}
\label{sec:2.2} 
We would like to address the question for existence of solutions for the profile equation \eqref{30}. Eventually, we will write them down explicitly, but first, we need to identify some necessary conditions on the parameters, which turn out to be essentially sufficient as well. The approach here is classical (even though our problem is certainly not) - we build some Pohozaev's identities, which proceeds  by establishing relations between various norms of the eventual solution $\phi$, which are {\it a priori} assumed finite.  As a consequence, we find that the parameters must meet certain constraints. 
\begin{proposition}
	\label{prop:10} 
	Let $\phi\in H^s(\rn)\cap C(\rn)$  be a weak solution of \eqref{30}. Then, 
	\begin{eqnarray}
	\label{40} 
& & 	\|\phi\|_{L^2}^2=\f{2s-n}{2s\om} |\phi(0)|^{2\si+2} \\
\label{42} 
& & \|(-\De)^{\f{s}{2}} \phi\|_{L^2}^2= \f{n}{2s} 	|\phi(0)|^{2\si+2}.
	\end{eqnarray}
\end{proposition}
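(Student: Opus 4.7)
The strategy is to extract two independent linear relations between the three quantities $\|\phi\|_{L^2}^2$, $\|(-\De)^{s/2}\phi\|_{L^2}^2$, and $|\phi(0)|^{2\si+2}$ and then solve the resulting $2\times 2$ system. The first relation is the usual energy identity: inserting the admissible test function $\psi=\phi\in H^s\cap C$ into the weak formulation of \eqref{30} yields
\begin{equation*}
\|(-\De)^{s/2}\phi\|_{L^2}^2+\om\|\phi\|_{L^2}^2=|\phi(0)|^{2\si+2}.\qquad(\star)
\end{equation*}

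For the second relation, rather than testing with the badly-behaved $x\cdot\nabla\phi$, I would work on the Fourier side. Taking the Fourier transform of \eqref{30} gives the pointwise identity $((2\pi|\xi|)^{2s}+\om)\hat\phi(\xi)=C$ with $C:=|\phi(0)|^{2\si}\phi(0)$, so that $\hat\phi(\xi)=C/((2\pi|\xi|)^{2s}+\om)$. Setting
$$
I_k(\om):=\int_{\rn}\f{d\xi}{((2\pi|\xi|)^{2s}+\om)^k},
$$
Fourier inversion at $x=0$ and Plancherel give $\phi(0)=C\,I_1(\om)$ and $\|\phi\|_{L^2}^2=C^2 I_2(\om)$. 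The dilation $2\pi\xi=\om^{1/(2s)}\mu$ produces the homogeneity $I_1(\om)=\om^{n/(2s)-1}I_1(1)$; differentiating this in $\om$ and using $dI_1/d\om=-I_2$ yields the scaling identity
$$
I_2(\om)=\f{2s-n}{2s\,\om}\,I_1(\om).
$$
Combining these last two displays gives
$$
\|\phi\|_{L^2}^2=C^2 I_2(\om)=\f{2s-n}{2s\om}\,C\cdot(CI_1(\om))=\f{2s-n}{2s\om}|\phi(0)|^{2\si+2},
$$
which is \eqref{40}. Plugging \eqref{40} back into $(\star)$ immediately gives \eqref{42}.

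The main technical issue I expect to have to address is the convergence of $I_1(1)=\int d\xi/((2\pi|\xi|)^{2s}+1)$, which by a spherical-coordinates reduction requires $s>n/2$; this confines the clean version of the argument to the regime $\om>0$, $s>n/2$. In the complementary admissible regime $\om<0$, $s<n/2$ (as will appear in Proposition \ref{r:2}), the symbol $(2\pi|\xi|)^{2s}+\om$ vanishes on a sphere and the resolvent has to be interpreted more carefully, but the underlying algebraic mechanism — the energy identity $(\star)$ together with the scaling identity — is unchanged.
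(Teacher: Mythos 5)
Your argument is correct in the regime it covers, and it is a genuinely different route from the paper's. The paper proves these identities as true Pohozaev/virial identities: it tests \eqref{30} against $x\cdot\nabla\Psi$, uses the commutator $[(-\De)^s, x\cdot\nabla]=2s(-\De)^s$ to produce $(s-\f n2)\|(-\De)^{\f s2}\phi\|^2$, and shows by an approximate-identity computation that $\dpr{\de_0}{x\cdot\nabla\Psi}=0$, yielding the second linear relation $(s-\f n2)\|(-\De)^{\f s2}\phi\|^2=\f{\om n}{2}\|\phi\|^2$ without ever solving the equation. You instead solve \eqref{30} explicitly on the Fourier side and reduce the second relation to the scaling identity $I_2(\om)=\f{2s-n}{2s\om}I_1(\om)$; your computation of that identity is right (it matches the paper's Appendix formulas \eqref{b:12}--\eqref{b:14}), and your route has the advantage of simultaneously producing the explicit profile $\hat\phi=C/((2\pi|\xi|)^{2s}+\om)$, which the paper only derives later (Lemma \ref{le:60}). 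The trade-off is that the virial argument is structurally robust — it needs no sign condition on $\om$ and no convergence of $I_1$ — whereas your argument is tied to the explicit formula.

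That trade-off matters here, because the proposition is stated for an arbitrary weak solution $\phi\in H^s(\rn)\cap C(\rn)$ with no hypothesis on $\om$ or on $s$ versus $\f n2$, and its entire purpose downstream is Proposition \ref{r:2}: to \emph{exclude} the regimes $\om>0,\ s\le\f n2$ and $\om<0,\ s\ge\f n2$. As written, your proof establishes \eqref{40}--\eqref{42} only when $\om>0$ and $s>\f n2$, i.e.\ exactly where they carry no new information, so it cannot replace the paper's proof for its intended use. The gap is closable within your own framework, but you must actually close it rather than defer it: for $\om>0$ and $s\le\f n2$, the identity $\hat\phi=C/((2\pi|\xi|)^{2s}+\om)$ together with $\phi\in H^s$ forces $\int (2\pi|\xi|)^{2s}|\hat\phi(\xi)|^2\,d\xi<\infty$, which fails for $C\neq 0$, so $\phi\equiv 0$; for $\om<0$ the function $1/((2\pi|\xi|)^{2s}+\om)$ is not locally square-integrable near the zero sphere of the symbol, so again $\hat\phi\in L^2$ forces $C=0$ and then $\hat\phi$ is an $L^2$ solution of the homogeneous symbol equation, hence zero. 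Spelling this out turns your remark about ``interpreting the resolvent more carefully'' into a complete proof (indeed a slightly stronger conclusion than the paper's in the $\om<0$ case); without it, the proposal does not prove the proposition in the generality in which it is stated and used.
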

\begin{proof}
	Testing \eqref{30} with $\phi$ itself results in 
	\begin{equation}
	\label{45} 
		\|(-\De)^{\f{s}{2}} \phi\|_{L^2}^2 + \om \|\phi\|_{L^2}^2 - |\phi(0)|^{2\si+2}=0.
	\end{equation}
Next, we test \eqref{30} against $x\cdot \nabla\Psi$, for a test function $\Psi$. We obtain, by taking into account the commutation relation $[(-\De)^s, x\cdot \nabla]=2s(-\De)^s$, 
	\begin{eqnarray*}
	\dpr{(-\De)^{\f{s}{2}} \phi}{(-\De)^{\f{s}{2}} [x\cdot \nabla\Psi]} &=& \dpr{\phi}{x\cdot \nabla (-\De)^s \Psi}+2s \dpr{(-\De)^{\f{s}{2}} \phi}{(-\De)^{\f{s}{2}} \Psi}=\\
	&=& -\dpr{x\cdot \nabla \phi}{(-\De)^s \Psi}+(2s-n) \dpr{(-\De)^{\f{s}{2}} \phi}{(-\De)^{\f{s}{2}} \Psi}=\\
	&=& -\dpr{(-\De)^{\f{s}{2}}[x\cdot \nabla \phi]}{(-\De)^{\f{s}{2}} \Psi}+(2s-n) \dpr{(-\De)^{\f{s}{2}} \phi}{(-\De)^{\f{s}{2}} \Psi}.
	\end{eqnarray*}
	This implies 
	$$
	\dpr{(-\De)^{\f{s}{2}} \phi}{(-\De)^{\f{s}{2}} [x\cdot \nabla\Psi]}+\dpr{(-\De)^{\f{s}{2}}[x\cdot \nabla \phi]}{(-\De)^{\f{s}{2}} \Psi}=(2s-n) \dpr{(-\De)^{\f{s}{2}} \phi}{(-\De)^{\f{s}{2}} \Psi}.
	$$
	Note that the right-hand side of this expression makes sense for\footnote{one can formally take limits of $\Psi_n: \|\Psi_n-\phi\|_{H^s}\to 0$},  $\Psi=\phi$ whence 
	\begin{equation}
	\label{55} 
	\dpr{(-\De)^{\f{s}{2}} \phi}{(-\De)^{\f{s}{2}} [x\cdot \nabla\Psi]}=(s-\f{n}{2}) 
	\|(-\De)^{\f{s}{2}} \phi\|^2.
	\end{equation}
Also\footnote{note that $\phi\in H^1(\rn)$ makes this well-defined}
$$
 \dpr{\phi}{x\cdot \nabla\Psi}=-n\dpr{\phi}{\Psi} - \dpr{x\cdot \nabla \phi}{\Psi},
$$
which also makes sense for $\Psi=\phi$, whence 
\begin{equation}
\label{57} 
\dpr{\phi}{x\cdot \nabla\Psi}=-\f{n}{2} \|\phi\|^2.
\end{equation}
Finally, we claim that $\dpr{\de_0}{x\cdot \nabla \Psi}=0$ for each test function $\Psi$. Indeed, Introduce a radial function $V:\rn\to \rone$, which is smooth and non-negative function, supported on $\bb:=\{x\in \rn: \|x\|<1\}$ and normalized so that $\int_{\rn} V(x) dx=1$. It is well-known, that in a distribution sense, one can approximate $N^n V(N x)\to \de_0$.  That is $\lim_{N\to \infty} 
\dpr{ N^n V(N \cdot)}{f}=f(0)$. So, 
	\begin{eqnarray*}
\dpr{\de_0}{x\cdot \nabla \Psi} &=& \lim_{N\to \infty} N^n \sum_{j=1}^n \int_{\rn} V(N x) x_j\p_j \Psi(x) dx=\\
&=& 
\lim_{N\to \infty}\left[-n N^n \int_{\rn} V(N x)  \Psi(x) dx  - N^{n+1} \int_{\rn} |x| V'(Nx) \Psi(x)dx \right]=0,
\end{eqnarray*}
	since 
	$$
	 N^{n+1} \int_{\rn} |x| V'(Nx)dx=\int_{\rn} |y| V'(y) dy= |{\mathbb S^{n-1}}|
	\int_0^\infty V'(\rho) \rho^n d\rho=-n\int_0^\infty V(\rho) \rho^{n-1} d\rho=-n.
	$$
	Putting  $\dpr{\de_0}{x\cdot \nabla \Psi}=0$ together with \eqref{55}, \eqref{57}, implies 
	\begin{equation}
	\label{50}
	(s-\f{n}{2}) 
	\|(-\De)^{\f{s}{2}} \phi\|^2 - \f{\om n}{2}  \|\phi\|^2=0.
	\end{equation}
	Solving the system of equations \eqref{45} and \eqref{50} results in the relations \eqref{40} and \eqref{42}.
\end{proof}
An immediate corollary of these results is, using the positivity of the various norms in \eqref{40}, \eqref{42} is given by Proposition \ref{r:2}. Namely, either $\om>0, s>\f n2$ or $\om<0, s< \f n2$. 
Clearly, the case $\om>0, s>\f{n}{2}$ is a more physical situation to consider - after all, one has the embedding $H^s(\rn) \hookrightarrow C(\rn)$ and hence functions in the class $H^s(\rn)$ are automatically continuous. 
\subsection{The self-adjoint operators $(-\De)^s+\la - c\de_0$}
\label{sec:2.3}
 In this section, we introduce the necessary self-adjoint extensions of the operators formally introduced as $(-\De)^s+\la - c\de_0$. There has been quite a bit of recent work on the subject, see \cite{ACCT, ADFT, AN1, ANO2, CFT} among others. In these papers, the authors  have introduced various (and sometimes all) self-adjoint extensions of such objects, under different assumptions on the parameters. As dictated by the results of Proposition  \ref{r:2},  we work under  the assumption $s>\f{n}{2}$, which simplifies matters quite a bit, in the sense that the self-adjoint extension, which generates the standard quadratic form,  is canonical. 
 
 More specifically, for given constants $\la>0, c>0$,  we introduce the skew-symmetric quadratic form 
 $$
 \cq_c(f,g)= \dpr{\sqrt{((-\De)^s+\la) f}}{\sqrt{((-\De)^s+\la) g}}  - c f(0) \bar{g}(0), f,g\in D(\cq)
 $$
 with domain $D(\cq)=H^s(\rn)$. Note that as $D(\cq)\subset C(\rn)$, the values $f(0), g(0)$ make sense. In addition, the form $\cq$ is bounded from below. This is  a consequence of the Sobolev embedding $H^\al\hookrightarrow L^\infty(\rn), \al>\f{n}{2}$. 
 Indeed, choose  $\al: \f{n}{2}<\al<s$ and estimate via the Sobolev and the Gagliardo-Nirenberg's inequalities 
 \begin{eqnarray*}
 \cq_c(f,f)\geq c_\la \|f\|_{H^s}^2 - k_\al \|f\|_{H^\al}^2 \geq c_\la \|f\|_{H^s}^2 - 
 k_{\al}( \f{c_\la}{2 k_{\al}} \|f\|_{H^s}^2 + d_{\al,\la} \|f\|_{L^2}^2)\geq D_{\al,\la} \|f\|_{\dot{H}^s}^2 - M_{\al, \la} \|f\|_{L^2}^2.
 \end{eqnarray*}
 In addition, $\cq$ is closed form, as $\|f\|_{H^s}^2\sim \cq(f,f)+M \|f\|^2$, for large enough $M$. 
 According to the standard theory for quadratic forms, see Theorem VIII.15 in \cite{RS},  there is an unique self-adjoint operator $\cl_c$, so that 
$$
 D(\cl_c)\subset D(\cq), \ \ \cd_c(f,g)=\dpr{\cl_c f}{g}, \ \ \forall f\in D(\cl_a), g\in D(\cq).
 $$
 Identifying the exact form of $\cl_c$ may not be an easy task, in general.  In our case, this is not so hard, as the operator  has been essentially constructed in previous works, see \cite{CFT} for the one dimensional case. We follow their notations and approach. 
 To this end, introduce  the Green's function of the operator $(-\De)^s+\la$, namely the  function $\cg_s^\la$, so that 
 $$
 ((-\De)^s+\la) \cg_s^\la=\de_0.
 $$
 By taking the Fourier transform, we can write the following formula for $\cg_s^\la$
 $$
 \widehat{\cg_s^\la}(\xi)=\f{1}{(2\pi |\xi|)^{2s}+\la}.
 $$
 Clearly, since $s>\f{n}{2}$, $\cg_s^\la\in H^s(\rn)\subset C(\rn)$. Introduce the domain of the operator $\cl_c$ as
 \begin{equation}
 \label{65}
 D(\cl_c)=\{\psi\in H^s(\rn): \psi=g+c \psi(0) \cg_s^\la, g\in H^{2s}(\rn)\}\subset H^s(\rn).
 \end{equation}
 With this domain,  its action is  defined as 
 \begin{equation}
 \label{70}
  \cl_c \psi :=((-\De)^s+\la) g.
 \end{equation}
Note that for $\psi\in D(\cl_c)$ and $h\in H^s(\rn)=D(\cq)$, we have 
\begin{eqnarray*}
\dpr{\cl_c \psi}{h} &=& \dpr{((-\De)^s+\la) g}{h}=\dpr{\sqrt{(-\De)^s+\la}\psi}{\sqrt{(-\De)^s+\la} h}-c\psi(0) \dpr{((-\De)^s+\la)\cg_s^\la}{h}\\
&=& \dpr{\sqrt{(-\De)^s+\la}\psi}{\sqrt{(-\De)^s+\la} h} - c\psi(0)\bar{h}(0)=\cq_c(\psi,h).
\end{eqnarray*}
Thus, $\cl_c$ is a closed symmetric operator, with a quadratic form precisely $\cq$. Note that the role of the constant $\la$ in the definition is to ensure that the function $\widehat{\cg_s^\la}$ has no singularity  at $\xi=0$. Also, for every $\tilde{\la}>0$, we have $\cl_c^{\tilde{\la}}=\cl_c^\la+\tilde{\la}-\la$. 

We now need to show that $\cl_c$ is precisely the unique self-adjoint operator with this property. 
\begin{lemma}
	\label{le:12} 
	The closed symmetric operator $\cl_c$, with domain given in \eqref{65} and whose action is defined in \eqref{70},  is self-adjoint. 
\end{lemma}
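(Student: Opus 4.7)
The plan is to invoke the form representation theorem once more, combined with a surjectivity argument. By the preceding discussion, $\cl_c$ is a closed symmetric operator whose quadratic form is precisely $\cq_c$. Since $\cq_c$ is closed and bounded below, the representation theorem of Reed--Simon (already cited, Theorem VIII.15) produces a \emph{unique} self-adjoint operator $\tilde{\cl}_c$ associated with $\cq_c$. The computation immediately preceding the statement shows $\dpr{\cl_c \psi}{h} = \cq_c(\psi,h)$ for $\psi\in D(\cl_c)$ and $h \in D(\cq_c) = H^s(\rn)$, which already gives the inclusion $\cl_c \subseteq \tilde{\cl}_c$. The task therefore reduces to proving the reverse inclusion.

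A standard way to obtain the reverse inclusion while directly establishing self-adjointness is to show that $\cl_c + M$ is surjective from $D(\cl_c)$ onto $L^2(\rn)$ for $M$ sufficiently large; combined with closedness, symmetry, and the uniqueness in the representation theorem, this forces $\cl_c = \tilde{\cl}_c$. Accordingly, I would fix $f\in L^2(\rn)$ and search for $\psi \in D(\cl_c)$ satisfying $(\cl_c + M)\psi = f$. Substituting the ansatz $\psi = g + c\psi(0)\cg_s^\la$ with $g \in H^{2s}(\rn)$, the equation $(\cl_c+M)\psi=f$ becomes
$$
((-\De)^s + \la + M)g = f - Mc\,\psi(0)\,\cg_s^\la.
$$
Setting $R_M := ((-\De)^s + \la + M)^{-1}$, this determines $g = R_M f - Mc\,\psi(0)\,R_M \cg_s^\la \in H^{2s}(\rn)$. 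Using $s > \f n2$ (so that pointwise evaluation at $0$ is legal on the relevant Sobolev spaces), the consistency condition $\psi(0) = g(0) + c\psi(0)\cg_s^\la(0)$ becomes the scalar linear equation
$$
\psi(0)\Bigl(1 - c\,\cg_s^\la(0) + Mc\,(R_M \cg_s^\la)(0)\Bigr) = (R_M f)(0).
$$
Hence existence of $\psi$ is reduced to showing that the coefficient in parentheses is nonzero.

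The main (and essentially only) technical point is thus to verify that this coefficient does not vanish for $M$ large. A Fourier computation gives
$$
M(R_M \cg_s^\la)(0) = \int_{\rn} \f{M}{(2\pi|\xi|)^{2s} + \la + M}\,\widehat{\cg_s^\la}(\xi)\,d\xi,
$$
and since $\widehat{\cg_s^\la}$ is integrable (thanks to $s > \f n2$), dominated convergence gives $M(R_M\cg_s^\la)(0) \to \cg_s^\la(0)$ as $M\to\infty$. Consequently the coefficient tends to $1 - c\,\cg_s^\la(0) + c\,\cg_s^\la(0) = 1$, which is nonzero for all sufficiently large $M$. This produces the required $\psi\in D(\cl_c)$, establishing surjectivity of $\cl_c + M$ and hence self-adjointness. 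The only mild obstacle is this asymptotic nonvanishing of the coefficient, which is handled cleanly by the integrability of $\widehat{\cg_s^\la}$; no delicate boundary analysis is needed because the case $s > \f n2$ makes evaluation at $0$ a bounded linear functional on the relevant domains.
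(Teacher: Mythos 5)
Your proof is correct, and at its core it takes the same route as the paper: establish self-adjointness of the closed symmetric operator $\cl_c$ by showing that $\cl_c+M$ is surjective onto $L^2(\rn)$ for $M\gg 1$ (equivalently, that a real number lies in the resolvent set), using the explicit parametrization $\psi=g+c\psi(0)\cg_s^\la$ of the domain and reducing everything to a scalar equation for $\psi(0)$. Two differences in execution are worth recording. First, the paper must split into cases according to whether $c\,\cg_s^\la(0)=1$ or not, disposing of the exceptional case by shifting $\la$ and using $\cl_c^{\tilde\la}=\cl_c^\la+(\tilde\la-\la)Id$; your argument avoids this entirely, because the coefficient in your consistency equation is $1-c\,\cg_s^\la(0)+Mc\,(R_M\cg_s^\la)(0)$, which tends to $1$ as $M\to\infty$ by dominated convergence and is therefore nonzero for large $M$ no matter what $c\,\cg_s^\la(0)$ is. Second, your reduction of $(\cl_c+M)\psi=f$ correctly retains the coupling term $Mc\,\psi(0)\cg_s^\la$ arising from $M\psi=Mg+Mc\,\psi(0)\cg_s^\la$, whereas the paper asserts that the resolvent equation decouples into $((-\De)^s+M)g=f$ — a statement that is only accurate if one tacitly re-expresses the domain with $\cg_s^M$ in place of $\cg_s^\la$ — so your version is the more careful one at that step. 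The only items to make fully explicit in a final write-up are (i) the routine check that the number $\psi(0)$ produced by the scalar equation is indeed the value at the origin of the function $\psi=g+c\psi(0)\cg_s^\la$ you build from it, and (ii) the standard deduction that symmetry together with surjectivity of $\cl_c+M$ for a real $M$ gives $\ker(\cl_c^*+M)=\mathrm{Ran}(\cl_c+M)^\perp=\{0\}$ and hence $\cl_c=\cl_c^*$; this is exactly the criterion from Reed–Simon that the paper also invokes, so your appeal to the uniqueness clause of the form representation theorem, while valid, is not even needed.
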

 \begin{proof}
 	For technical reasons, let us first assume the condition 
 	\begin{equation}
 	\label{cond} 
 	c\cg_s^\la(0)\neq 1.
 	\end{equation}
 	With that,  we work on a different representation on $D(\cl_c)$. More precisely, we would like to write $\psi$ purely  in terms of $g$. To this end, we evaluate the identity relating $\psi$ and $g$ at $x=0$. We obtain the equation for $\psi(0)$ 
 	$$
 	\psi(0)=g(0)+c \psi(0) \cg_s^\la(0).
 	$$
 	This equation has a solution, under the condition \eqref{cond}, 
 	\begin{equation}
 	\label{80}
 		\psi(0)=\f{g(0)}{1-c \cg_s^\la(0)}.
 	\end{equation}
 	One can now write, for $c\neq \f{1}{\cg_s^\la(0)}$, 
 	$$
 	D(\cl_c)=\{\psi\in L^2(\rn): \psi=g +c  \cg_s^\la \f{g(0)}{1-c \cg_s^\la(0)}, g \in H^{2s}(\rn)\},
 	$$
 	which describes $D(\cl_c)$ purely in terms of an arbitrary function $g\in H^{2s}(\rn)$. 
 	
 	In order to show that $\cl_c=\cl_c^*$, it suffices to show that it has a real number in its resolvent set, see Corollary on p. 137, \cite{RS2}. To this end, let $M>>1$, and we will show that $-M-\la\in \rho(\cl_c)$. Let $f\in L^2(\rn)$ is arbitrary and consider 
 	\begin{equation}
 	\label{75}
 		(\cl_c+M-\la) \psi=f.
 	\end{equation}
 	This is of course equivalent to the equation $((-\De)^s+M)g=f$, where 
 	$$
 	\psi=g +c  \cg_s^\la \f{g(0)}{1-c \cg_s^\la(0)}. 
 	$$
 	which has the unique solution 
 	$
 	g=((-\De)^s+M)^{-1} f\in H^{2s}(\rn).
 	$
 	Thus, we can uniquely  solve \eqref{75} as follows 
 	$$
 	\psi=g +c  \cg_s^\la \f{g(0)}{1-c \cg_s^\la(0)}, \ \ g=((-\De)^s+M)^{-1} f\in H^{2s}(\rn).
 	$$
 	In terms of estimates $\|g\|_{H^{2s}}\leq C_M \|f\|_{L^2}$ and consequently 
 	$$
 	\|\psi\|_{L^2}\leq \|g\|_{L^2}+ C|g(0)|\leq  \|g\|_{H^s}\leq C_M \|f\|_{L^2}.
 	$$
 	This shows that all $\cl_c$, with $c$ satisfying \eqref{cond} are self-adjoint. What about $c$, which fails \eqref{cond}? In this case 
 	$$
 1=	c \cg_s^\la(0)= c \int_{\rn} \f{1}{(2\pi |\xi|)^{2s}+\la} d\xi
 	$$
 	It follows that for every $\tilde{\la}\neq \la$, say $\tilde{\la}>\la$, we have that $c \cg_s^{\tilde{\la}}(0)\neq 1$. Thus, following the scheme described in the previous arguments, the operator $\cl_c^{\tilde{\la}}$, formally defined through $(-\De)^s+\tilde{\la}- c \de_0$ is self-adjoint. This means that 
 	$$
 	\cl_c=\cl_c^\la=\cl_c^{\tilde{\la}}+(\la-\tilde{\la})Id,
 	$$
 	is self-adjoint as well. 
 \end{proof}
 \begin{rmk}
 	In particular, we have the following important formula for the action of $\cq_c$ on functions $\psi\in H^s$, with $\psi(0)=0$, 
 	\begin{equation}
 	\label{112} 
 	\cq_c(\psi,\psi)=\|(-\De)^{\f{s}{2}}\psi\|_{L^2}^2+\la \|\psi\|_{L^2}^2.
 	\end{equation}
 \end{rmk}

 \section{Variational construction of the waves $\phi_\om$ and spectral consequences} 
 \label{sec:3}
 We first construct,  in a variational manner,  some approximate solutions to the elliptic profile problem \eqref{30}. This will turn out to be important in our subsequent considerations. 
 \subsection{Variational constructions} 
Let $\om, \si>0$. For a radial function $V:\rn\to \rone$ as before\footnote{i.e. $V$ is non-negative, radial, smooth and supported on the unit ball $\bb\subset \rn$, with $\int_{\bb} V(x) dx=1$} and $N>>1$, consider the functional 
$$
I_{\omega,N}[u]=\frac{\int_{\mathbb{R}^n}|(-\Delta)^{s/2}u|^2dx+\omega\int_{\mathbb{R}^n}u^2 dx}{\left(\int_{\mathbb{R}^n} N^nV(Nx)|u|^{2\sigma+2}dx\right)^{\frac{1}{\sigma+1}}}.
$$
and the corresponding  unconstrained variational problem
$
I_{\om,N}[u] \to \min.
$
Clearly, $I_{\om,N}[u]>0$, so  its optimal value is well-defined 
$$
m_N(\omega):=\inf_{u\in\mathcal{S}, u\neq 0}I_{\omega,N}[u].
$$
 \begin{proposition}
 	\label{prop:22}
 	Let $s>\frac{n}{2}$.  
 	Then the unconstrained minimization problem 
 	\begin{equation}
 	\label{35}
 	I_{\om,N}[u]\to \min
 	\end{equation}
 	has a real-valued solution $\phi_N\in H^s(\rn)\cap L^{\infty}$, in particular $m_N(\om)>0$. Moreover, $\phi_N$ may be chosen to satisfy 
 	$$
 	N^n \int_{\rn} V(Nx)|\phi_N(x)|^{2\sigma+2}dx=1. 
 	$$
 	Finally, $\phi_N$ satisfies the Euler-Lagrange equation
 	\begin{equation}
 	\label{400}
 	(-\Delta)^s \phi_N +\om  \phi_N- m_N(\om)  N^nV(Nx)|\phi_N|^{2\sigma}\phi_N=0
 	\end{equation}
 	in distributional sense.
% 	Finally, for the linearized operator, 
% 	$$
% 	\cl_+^N = (-\De)^s+\om - (2\sigma+1) m_N(\om) N^nV(Nx)|\phi_N|^{2\sigma}, 
% 	$$
% 	we have that   for each real-valued 
% 	$h\in \mathcal{S}: \int N^nV(Nx)|\phi|^{2\sigma+1} h(x) dx=0$, 
% 	$\dpr{\cl_+^N h}{h}\geq 0$. 
 \end{proposition}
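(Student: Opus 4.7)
The plan is to apply the direct method of the calculus of variations. The crucial feature is that the weight $N^nV(Nx)$ is supported in the compact ball $\{|x|\leq 1/N\}$, which provides the compactness needed to pass to the limit in the denominator; the hypothesis $s>\f n2$ enters decisively both in this step and for the positivity of $m_N(\om)$.

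First, to establish $m_N(\om)>0$: since $s>\f n2$, the Sobolev embedding $H^s(\rn)\hookrightarrow L^\infty(\rn)$ gives $\|u\|_{L^\infty}\leq C_s\|u\|_{H^s}$, whence
$$
\int_{\rn} N^nV(Nx)|u|^{2\si+2}dx\leq \|u\|_{L^\infty}^{2\si+2}\leq C_s^{2\si+2}\|u\|_{H^s}^{2\si+2},
$$
and combined with $\|(-\De)^{s/2}u\|_{L^2}^2+\om\|u\|_{L^2}^2\geq \min(1,\om)\|u\|_{H^s}^2$ this yields a uniform positive lower bound for $I_{\om,N}$. Next, using the scale invariance $I_{\om,N}[\la u]=I_{\om,N}[u]$ for $\la>0$, I take a real-valued minimizing sequence $\{u_k\}\subset H^s(\rn)$ with the normalization $\int N^nV(Nx)u_k^{2\si+2}dx=1$. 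Restriction to real-valued $u$ costs nothing: for any complex $\phi=u_1+iu_2$, Minkowski's inequality in $L^{\si+1}(V(Nx)dx)$ applied to $u_1^2,u_2^2$ together with the elementary $(a_1+a_2)/(b_1+b_2)\geq \min(a_1/b_1,a_2/b_2)$ for positive reals gives $I_{\om,N}[\phi]\geq\min(I_{\om,N}[u_1],I_{\om,N}[u_2])$. Then $\|u_k\|_{H^s_\om}^2=I_{\om,N}[u_k]\to m_N(\om)$, so $\{u_k\}$ is bounded in $H^s$ and I extract a subsequence with $u_k\rightharpoonup \phi_N$ weakly in $H^s(\rn)$, with $\|\phi_N\|_{H^s_\om}^2\leq m_N(\om)$ by weak lower semicontinuity.

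The main step, and the principal obstacle, is to pass to the limit in the denominator, since weak $H^s$ convergence does not yield strong convergence in any Lebesgue norm globally. Here the compact support of $V$ rescues us: choosing $s'\in(\f n2,s)$, the Rellich--Kondrachov compactness $H^s\to H^{s'}(\{|x|<1\})$ followed by the Morrey embedding $H^{s'}(\{|x|<1\})\hookrightarrow L^\infty(\{|x|<1\})$ yields uniform convergence $u_k\to\phi_N$ on $\{|x|\leq 1/N\}$, the support of $N^nV(Nx)$. Hence
$$
\int N^nV(Nx)|u_k|^{2\si+2}dx\longrightarrow \int N^nV(Nx)|\phi_N|^{2\si+2}dx=1,
$$
which shows $\phi_N\not\equiv 0$ and $I_{\om,N}[\phi_N]\leq m_N(\om)$, forcing equality and making $\phi_N$ a minimizer. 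The Euler--Lagrange equation \eqref{400} follows from the standard variation $\f{d}{d\eps}\big|_{\eps=0}I_{\om,N}[\phi_N+\eps\psi]=0$ together with the normalization, and $\phi_N\in L^\infty$ is automatic from $H^s\hookrightarrow L^\infty$.
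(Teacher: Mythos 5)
Your proposal is correct and follows essentially the same route as the paper: positivity of $m_N(\om)$ from $H^s\hookrightarrow L^\infty$, normalization by scale invariance, weak lower semicontinuity for the numerator, and an upgrade of weak $H^s$ convergence to uniform convergence on the support of $N^nV(N\cdot)$ to pass to the limit in the denominator. The only cosmetic differences are that you obtain the uniform convergence via Rellich--Kondrachov plus Morrey rather than the paper's H\"older bound plus Arzel\`a--Ascoli (the same compactness in substance), and you supply the reduction to real-valued competitors, which the paper leaves implicit.
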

 
 \begin{proof}
 	Since $\|V\|_{L^1}=1$, we have  for $u\in H^s(\rn)\subset L^{\infty}$,   
 	\begin{equation}
 	\label{160} 
 	\left(N^n \int_{\rn} V(Nx)|u(x)|^{2\sigma+2}dx\right)^{\frac{1}{\sigma+1}}\leq \|u\|_{L^\infty(\rn)}^2\leq C \|u\|_{H^s(\rn)}^2,
 	\end{equation}
 	whence \eqref{35}  is a well-posed variational problem and $m_N(\om)>0$. Next, 
 	due to dilation properties of the functional $I_{\om,N}$,  we can assume that  the infimum is taken only over functions with the normalization property 
 	$$
 N^n	\int_{\mathbb{R}^n}   V(Nx)|u(x)|^{2\sigma+2}dx=1.
 	$$
 	 Let $u_k$ be a minimizing sequence such that $\int_{\mathbb{R}^n}N^nV(Nx)|u_k|^{2\sigma+2}dx=1$ and hence 
 	 $$
 	 \lim_{k}(\|(-\Delta)^{\f{s}{2}}u_k\|_{L^2}^2+\omega \|u_k\|_{L^2}^2)=m_N(\omega).
 	 $$ 
 	By weak compactness, we can select a weakly convergent subsequence (which we assume is just $\{u_k\}$), $u_k\rightharpoonup u$. By the lower semi-continuity of the norms, with respect to weak convergence 
 	\begin{equation}
 	\label{115} 
 	\|(-\Delta)^{\f{s}{2}}u\|_{L^2}^2+\omega \|u\|_{L^2}^2\leq \liminf_k (\|(-\Delta)^{\f{s}{2}}u_k\|_{L^2}^2+\omega \|u_k\|_{L^2}^2)=m_N(\omega).
 	\end{equation}
 	 We now show that $\{u_k\}$ is pre-compact in $C(\bb)$. Indeed, since $s>\frac{n}{2}$, we have  by the Sobolev embedding that  
 		\begin{equation}
 		\label{117} 
 	\|u_k\|_{C^{\gamma}(\rn)}\leq C \|u_k\|_{H^s},
 		\end{equation}
 	 for $0<\gamma<\{s-\frac{n}{2}\}$. Consequently, $u_k$ are uniformly H\"older-continuous, hence equicontinuous as elements of $C(\bb)$.  Also,  $\{u_k\}$ is a totally bounded by \eqref{117}. By Arzela-Ascolli, we have that $\{u_k\}_{k=1}^{\infty}$ is pre-compact in $C[0,1]$ , i.e for a subsequence, which we again assume it is just $u_k$, we have that $u_{k}\rightrightarrows_{\bb} u$. It is now clear that 
 	 \begin{equation}
 	 \label{180} 
 	 1=\lim_k N^n \int_{\rn}V(Nx)|u_k(x)|^{2\sigma+2}dx=
 	 N^n \int_{\rn} V(Nx)|u(x)|^{2\sigma+2}dx.
 	 \end{equation}
 	 Thus, by \eqref{115}and \eqref{180}, we conclude that 
 	 $
 	 I_{\om,N}[u]\leq m_N(\om). 
 	 $
 	This,  by the definition of $m_N(\om)$ means that $I_{\om,N}[u]=m_N(\om)$. In particular, 
 	$$
 	\|(-\Delta)^{\f{s}{2}}u\|_{L^2}^2+\omega \|u\|_{L^2}^2=m_N(\omega),
 	$$
 	so $u$ actually solves the minimization problem \eqref{35}. This is the solution $\phi_N$ that we were interested in. 
 	
 	Next we show that the minimizer satisfies the Euler Lagrange equation. To that end take an arbitrary test function $h$ and let $\epsilon>0$ consider $u=\phi_N+\epsilon h,$ and recall that $\int N^nV(Nx)|\phi_N|^{2\sigma+2}dx=1.$ Since $\phi_N$ is a minimizer we have that 
 	$
 	I_{\om,N}[u]\geq m_N(\om). 
 	$
 	Expanding in powers of $\eps$, we obtain 
 	$$
 	\int |(-\Delta)^{s/2}(\phi_N+\epsilon h)|^2dx+\om\int (\phi_N+\epsilon h)dx
 	=m_N(\om)+2 \epsilon\langle((-\Delta)^s+\om)\phi_N,h\rangle+O(\epsilon^2).
 	$$
 	Similarly, 
 	\begin{align*}
 	\int V(Nx)|\phi_N+\epsilon h|^{2\sigma+2}dx
 	&= \int V(Nx)|\phi_N|^{2\sigma+2}dx +(2\si+2) \epsilon \int V(Nx)|\phi_N|^{2\sigma}\phi_N h  +O(\epsilon^2)\\
 	&= 1+(2\sigma+2) \epsilon \int V(Nx)|\phi_N|^{2\sigma}\phi_N h+O(\epsilon^2)
 	\end{align*}
 	Thus, 
 	\begin{align*}
 	I_{\om,N}&=\frac{m_N(\om)+2\epsilon\langle((-\Delta)^s+\om)\phi_N,h\rangle+O(\epsilon^2)}{
 		1+2 \epsilon \int N^nV(Nx)|\phi_N|^{2\sigma}\phi_N h dx +O(\epsilon^2)}\\
 	&=m_N(\om)+ 2 \epsilon\langle((-\Delta)^s+\om)\phi_N-m_N(\om)N^nV(Nx)|\phi_N|^{2\sigma}\phi_N,h\rangle+O(\epsilon^2)
 	\end{align*}
 	Since this hold for any arbitrary test function $h$ and any 
 	$\epsilon>0$ we have that $\phi_N$ solves \eqref{400}. 
 \end{proof}
 Next, we have the following technical result. 
 \begin{lemma} 
 	\label{le:2}
 	There exists constants $C_1(\om), C_2(\om)$, but independent on $N$, so that 
 	$$
 C_1(\om)\leq m_N(\om)\leq C_2(\om).
 	$$
 	 Furthermore, the sequence $\{\phi_N\}_{N=1}^{\infty}$, is a  pre-compact in every set of the form $C(K)$, where $K$ is a compact subset of $\rn$. 
 	
 \end{lemma}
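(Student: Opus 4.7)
My plan is to establish the two-sided bound on $m_N(\om)$ first, and then use it to control $\{\phi_N\}$ uniformly in $H^s$, from which the $C(K)$-precompactness follows by Sobolev embedding and Arzela-Ascoli.

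For the \emph{lower bound} $m_N(\om)\geq C_1(\om)$, I would apply the estimate \eqref{160} already obtained at the start of Proposition \ref{prop:22}: for any nonzero $u\in H^s(\rn)$,
$$
\left(N^n\int_{\rn} V(Nx)|u(x)|^{2\si+2}dx\right)^{\f{1}{\si+1}}\leq \|u\|_{L^\infty}^2\leq C_\om \bigl(\|(-\De)^{\f{s}{2}}u\|_{L^2}^2+\om\|u\|_{L^2}^2\bigr),
$$
where the last inequality is the Sobolev embedding $H^s\hookrightarrow L^\infty$ (valid since $s>\f n2$), with the constant absorbing $\om>0$. Dividing by the denominator gives $I_{\om,N}[u]\geq C_\om^{-1}$ uniformly in $N$, hence $C_1(\om):=C_\om^{-1}$ works.

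For the \emph{upper bound} $m_N(\om)\leq C_2(\om)$, the natural test function is the Green's function $\cg_s^\om$ itself. Since $((-\De)^s+\om)\cg_s^\om=\de_0$ and $\cg_s^\om\in H^s(\rn)$, one has
$$
\|(-\De)^{\f{s}{2}}\cg_s^\om\|_{L^2}^2+\om\|\cg_s^\om\|_{L^2}^2=\dpr{((-\De)^s+\om)\cg_s^\om}{\cg_s^\om}=\cg_s^\om(0),
$$
which is a finite positive number independent of $N$. For the denominator, the change of variables $y=Nx$ gives $N^n\int V(Nx)|\cg_s^\om(x)|^{2\si+2}dx=\int V(y)|\cg_s^\om(y/N)|^{2\si+2}dy$. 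Since $\cg_s^\om$ is continuous at the origin with $\cg_s^\om(0)>0$, there exists a neighborhood $U$ of $0$ on which $\cg_s^\om\geq \f{1}{2}\cg_s^\om(0)$; for $N\geq N_0$ the support of $V(N\cdot)$ lies in $U$, yielding a uniform positive lower bound on the denominator. For the finitely many remaining $N<N_0$ one may handle the values directly (they are finite), or alternatively replace the test function by a smooth cutoff of $\cg_s^\om$ supported in $U$. Either way, $I_{\om,N}[\cg_s^\om]\leq C_2(\om)$ uniformly in $N\geq 1$.

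For the \emph{precompactness} of $\{\phi_N\}$ in $C(K)$: using the normalization $N^n\int V(Nx)|\phi_N|^{2\si+2}dx=1$ in the numerator of $I_{\om,N}$, we directly read off
$$
\|(-\De)^{\f{s}{2}}\phi_N\|_{L^2}^2+\om\|\phi_N\|_{L^2}^2=m_N(\om)\leq C_2(\om),
$$
so $\{\phi_N\}$ is uniformly bounded in $H^s(\rn)$. Since $s>\f n2$, the embedding $H^s(\rn)\hookrightarrow C^{0,\ga}(\rn)$ for some $0<\ga<\min(1,s-\f n2)$ gives both a uniform sup-norm bound and uniform H\"older continuity of $\{\phi_N\}$. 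On any compact $K\subset\rn$ this yields equicontinuity and pointwise boundedness, so the Arzela-Ascoli theorem produces a subsequence converging in $C(K)$, exactly as was done at the level of the minimizing sequences in Proposition \ref{prop:22}. The main (mild) obstacle is the uniform lower bound of the denominator in the upper-bound step; this is where the continuity of $\cg_s^\om$ at $0$, guaranteed by $s>\f n2$, is essential.
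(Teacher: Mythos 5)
Your proof is correct and follows essentially the same route as the paper: the lower bound comes from \eqref{160}, the upper bound from a fixed test function, and the precompactness from the identity $\|(-\De)^{\f{s}{2}}\phi_N\|_{L^2}^2+\om\|\phi_N\|_{L^2}^2=m_N(\om)$ combined with the embedding $H^s\hookrightarrow C^\ga$ and Arzela--Ascoli. The only real difference is the choice of test function for the upper bound: the paper uses the Gaussian $u_0(x)=e^{-|x|^2}$, which satisfies $u_0\geq \f13$ on the unit ball and hence on the support of $V(N\cdot)$ for every $N\geq 1$, so the denominator of $I_{\om,N}[u_0]$ is bounded below uniformly with no case distinction; your choice of $\cg_s^\om$ forces you to treat $1\leq N<N_0$ separately, and there the phrase ``handle the values directly (they are finite)'' misses the point slightly --- finiteness of the denominator is not what is needed, strict positivity is, and since $V$ is only assumed nonnegative (it could vanish near the origin), this still requires a short argument for each such $N$. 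This is a minor, easily repaired blemish that does not affect the validity of the overall proof.
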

 \begin{proof}
 	The lower bound, with a constant independent on $N$ follows from \eqref{160}. The upper bound follows by testing against a concrete  function like $u_0(x)=e^{-|x|^2}$. Since $\f{1}{3}<u_0(x)\leq 1$, on the support of $V(N x), N\geq 1$, we have that 
 	$$
 m_N(\om)\leq 	I_{\om,N}[u_0]\leq 9 \left(\|(-\De)^{\f{s}{2}} u_0\|_{L^2}^2+ \om\| u_0\|_{L^2}^2\right)=:C_2(\om).
 	$$
 	Next, since $\phi_N$ satisfy $N^n \int_{\rn} V(N x) |\phi_N(x)|^{2\si+2} dx=1$, we have that $I_{\om,N}[\phi_N]=\|(-\De)^{\f{s}{2}} \phi_N\|_{L^2}^2+ \|\phi_N\|_{L^2}^2=m_N(\om)$. Thus, by Sobolev embedding 
 	$$
 	\|\phi_N\|_{C^\ga(\rn)}\leq C \|\phi_N\|_{H^s}\leq C(\om) m_N(\om)\leq C_3(\om).
 	$$
 	for $0<\ga<\min\{1, s-\f{n}{2}\}$. It follows that for  each compact $K\subset \rn$, $\{\phi_N\}$ is pre-compact in $C(K)$ by Arzela-Ascolli's theorem. 
 \end{proof}
 Clearly, Lemma \ref{le:2} allows us to take convergent (sub) sequence as $N\to \infty$. We wish to learn what the limit is expected to be. It turns out that it is nothing but the minimizer for the Sobolev inequality $H^s(\rn)\hookrightarrow L^\infty(\rn)$. We justify  that in the next section. 
 \subsection{Relation to the minimizers for the Sobolev embedding  $H^s(\rn)\hookrightarrow L^\infty(\rn)$}
 For $s>\f{n}{2}, \om>0$, we study  up the  functional
 $$
 J_\om[u]=\f{\|(-\De)^{\f{s}{2}} u\|_{L^2}^2+ \om \|u\|_{L^2}^2}{\|u\|_{L^\infty}^2}
 $$
 and the corresponding minimization problem $J_\om[u]\to \min$. Finally, denote 
 $$
 c^2(\om):=\inf_{u\in \cs: u\neq 0} J_\om[u].
 $$
 The described optimization problem has a clear analytical interpretation, namely that $c$ is the exact constant in the Sobolev embedding estimate 
 $$
 c(\om) \|u\|_{L^\infty} \leq \vertiii{u}_{H^s}:=\sqrt{\|(-\De)^{\f{s}{2}} u\|_{L^2}^2+ \om \|u\|_{L^2}^2}.
 $$
 We now from the Sobolev embedding $H^s(\rn)\hookrightarrow L^\infty(\rn)$ that $c$ is well-defined and we can alternatively introduce it as follows $c(\om) = \sup\{C>0: C \|u\|_{L^\infty}\leq \vertiii{u}_{H^s}, \forall u\in \cs\}$. 
 
 Another useful observation is that one can assume, without loss of generality, that in the infimum procedure described above, $\|u\|_{L^\infty}$ is replaced by $|u(0)|$. That is 
 $$
 c^2(\om)=\inf_{u\in H^s: u(0)\neq 0} \f{\|(-\De)^{\f{s}{2}} u\|_{L^2}^2+ \om \|u\|_{L^2}^2}{|u(0)|^2}.
 $$
 \begin{lemma}
 	\label{le:20} 
 	Let $s>\f{n}{2}, \om>0$ and $\ga<\min(1, s-\f{n}{2})$. Then, there exists $C=C(s,\om, \ga)$, so that 
 	\begin{equation}
 	\label{200} 
 	c^2(\om)\leq m_N(\om)\leq c^2(\om)+ C N^{-\ga}
 	\end{equation}
 \end{lemma}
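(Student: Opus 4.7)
The plan is to handle the two inequalities separately. For the lower bound $c^2(\omega) \leq m_N(\omega)$, I would exploit the fact that $N^n V(Nx)\,dx$ is a probability measure: for any nonzero $u \in \mathcal{S}$,
\begin{equation*}
N^n\int_{\rn} V(Nx)|u(x)|^{2\sigma+2}\,dx \;\leq\; \|u\|_{L^\infty}^{2\sigma+2}\cdot N^n\int_{\rn} V(Nx)\,dx \;=\; \|u\|_{L^\infty}^{2\sigma+2},
\end{equation*}
so that $I_{\omega,N}[u] \geq J_\omega[u] \geq c^2(\omega)$ for every test function $u$. Taking the infimum over $u$ gives $m_N(\omega) \geq c^2(\omega)$.

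For the upper bound, the strategy is to evaluate $I_{\omega,N}$ on a near-minimizer of $J_\omega$. Given $\epsilon > 0$, fix $u \in H^s(\rn)$ with $|u(0)| = \|u\|_{L^\infty} = 1$ and $J_\omega[u] \leq c^2(\omega) + \epsilon$; such $u$ exists by the alternative characterization $c^2(\omega)=\inf\{J_\om[v]\ :\ v\in H^s, v(0)\neq 0\}$. Note that $\|u\|_{H^s}$ is controlled by $\epsilon$ and $\omega$ only. By the Sobolev embedding $H^s(\rn) \hookrightarrow C^\gamma(\rn)$, valid for $\gamma < \min(1, s - \tfrac{n}{2})$, we have the pointwise bound $|u(x) - u(0)| \leq C\,\|u\|_{H^s}\, |x|^\gamma$. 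Since $|u|\leq 1$ and the map $t \mapsto |t|^{2\sigma+2}$ is Lipschitz on bounded sets, this upgrades to
\begin{equation*}
\big| |u(x)|^{2\sigma+2} - |u(0)|^{2\sigma+2}\big| \leq C_{\sigma,\omega,\epsilon}\, |x|^\gamma.
\end{equation*}
Integrating against $N^n V(Nx)$, which is supported in $|x|\leq 1/N$, produces
\begin{equation*}
N^n \int_{\rn} V(Nx)|u(x)|^{2\sigma+2}\,dx \;\geq\; |u(0)|^{2\sigma+2} - C\, N^{-\gamma} \;=\; 1 - C\, N^{-\gamma}.
\end{equation*}

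Raising to the power $1/(\sigma+1)$ and using $(1-\eta)^{1/(\sigma+1)} \geq 1 - C_\sigma\, \eta$ for small $\eta$, the denominator of $I_{\omega,N}[u]$ is at least $1 - C'N^{-\gamma}$, while its numerator equals $J_\omega[u]$. Hence for $N$ sufficiently large,
\begin{equation*}
m_N(\omega) \;\leq\; I_{\omega,N}[u] \;\leq\; \frac{J_\omega[u]}{1 - C'N^{-\gamma}} \;\leq\; (c^2(\omega)+\epsilon)\bigl(1 + C''N^{-\gamma}\bigr).
\end{equation*}
Letting $\epsilon \to 0$ yields $m_N(\omega) \leq c^2(\omega) + C\,N^{-\gamma}$, completing the proof. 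The only subtlety I anticipate is ensuring that the constants in the Hölder bound for $|u|^{2\sigma+2}$ remain uniform as $\epsilon\to 0$; this is handled by observing that once $\|u\|_{L^\infty}$ is normalized to $1$, the constraint $J_\omega[u] \leq c^2(\omega)+\epsilon$ gives a uniform $H^s$ bound, and therefore a uniform $C^\gamma$ bound.
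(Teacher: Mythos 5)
Your proof is correct and follows essentially the same route as the paper's: the lower bound via the fact that $N^nV(N\cdot)\,dx$ is a probability measure, and the upper bound via the $C^\gamma$ H\"older estimate from the Sobolev embedding integrated against $N^nV(Nx)$, with the uniform $H^s$ bound supplying uniform constants. The only (harmless) differences are that you test $I_{\omega,N}$ on a near-minimizer of $J_\omega$ whereas the paper runs the comparison in the opposite direction on the constrained minimizing set for $m_N(\omega)$, and that your normalization $\|u\|_{L^\infty}=|u(0)|=1$ strictly requires a translation putting the maximum at the origin (the $J_\omega$ functional being translation invariant), though in fact only $|u(0)|=1$ together with the uniform $H^s$ bound is used in your estimates.
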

 \begin{proof}
 	By \eqref{160}, we see that for every $N\geq 1$, $I_{\om, N}\geq J_\om$, whence $m_N(\om)\geq c^2(\om)$. 
 	
 	For the opposite inequality, observe first that since $m_N(\om)\leq C_2(\om)$, we can take 
 	$$
 	m_N(\om)=\inf_{u\in \cs: u\neq 0} I_{\om, N}[u]=\inf\limits_{ N^n \int_{\rn} V(N x) |\phi_N(x)|^{2\si+2} dx=1; \ \ \vertiii{u}_{H^s} \leq 10 C_2} I_{\om, N}[u].
 	$$
 	So, let $u\in H^s:  N^n \int_{\rn} V(N x) |u(x)|^{2\si+2} dx=1; \ \ \vertiii{u}_{H^s} \leq 10 C_2$. Recall that for every $q>1$, there is $C_q$, so that for $a>0, b>0$
 	$
 	|a^q-b^q|\leq C_q|a-b| (a^{q-1}+b^{q-1}).
 	$
 	As a consequence, and by Sobolev embedding
 	$$
 	\left||u(x)|^{2\si+2}-|u(0)|^{2\si+2}\right|\leq C_\si |u(x)-u(0)| \|u\|_{L^\infty}^{q-1}\leq 
 	C_{\ga, \si} |x|^\ga \|u\|_{C^\ga(\rn)}^q\leq 	C_{\ga, \si} |x|^\ga \|u\|_{H^s}^q
 $$
 and since $\vertiii{u}_{H^s} \leq 10 C_2$, we conclude 
 	\begin{equation}
 	\label{210} 
 	\left||u(x)|^{2\si+2}-|u(0)|^{2\si+2}\right|\leq C_{\ga, \si, \om} |x|^\ga. 
 	\end{equation}
 	It follows that 
 	\begin{eqnarray*}
 \left||u(0)|^{2\si+2}-1\right|	&=&  \left||u(0)^{2\si+2}-N^n \int_{\rn} V(N x) |u(x)|^{2\si+2} dx\right| = \\
 &=& 
 N^n\left|\int_{\rn} V(N x) [|u(x)|^{2\si+2}-|u(0)|^{2\si+2} dx\right|\leq C_{\ga, \si, \om} N^n \int_{\rn} V(N x) |x|^\ga dx\\
 &\leq& C_{\ga, \si, \om}  N^{-\ga} \int_{\rn} V(y) |y|^\ga dy\leq C_{\ga, \si, \om}  N^{-\ga},
 	\end{eqnarray*}
 	so $|u(0)|\leq 1 + C_{\ga, \si, \om}  N^{-\ga}$. 
 	It follows that 
  	\begin{eqnarray*}
  	m_N(\om) &=& \inf\limits_{ N^n \int_{\rn} V(N x) |\phi_N(x)|^{2\si+2} dx=1; \ \ \vertiii{u}_{H^s} \leq 10 C_2} 
  	\|(-\De)^{\f{s}{2}} u\|_{L^2}^2+ \om \|u\|_{L^2}^2 \leq \\
  	&\leq & 	(1+C_{\ga, \si, \om}  N^{-\ga}) \inf\limits_{ \vertiii{u}_{H^s} 
  		\leq   10 C_2, u(0)\neq 0} \f{\|(-\De)^{\f{s}{2}} u\|_{L^2}^2+ \om \|u\|_{L^2}^2}{|u(0)|^2} \leq 
  	c^2+ C_{\ga, \si, \om}  N^{-\ga}.
  	\end{eqnarray*}

 \end{proof}
 We now take need to take limit as $N\to \infty$. In view of our discussion so far, it is not surprising that  this yields  the minimizers for the Sobolev embedding $H^s(\rn)\hookrightarrow L^\infty(\rn)$. In turn, this allows us to present an explicit formula for the solutions of \eqref{30} and to interpret them as minimizers of the Sobolev embedding problem. 
 \subsection{Description of  the solutions for the profile equation \eqref{30}}
 \begin{lemma}
 	\label{le:40}
 	Let $s>\f{n}{2}, \om>0$. Then, for every constant $C\neq 0$, the function 
 	\begin{equation}
 	\label{220} 
 		\hat{\phi}(\xi)=\f{C}{(2\pi|\xi|)^{2s}+\om},
 	\end{equation}
 	is a minimizer of the problem $\min_{u\in H^s} J_\om[u]$. In particular, the optimal Sobolev constant is given by the formula 
 	$$
 	c^2(\om)=\left(\int_{\rn} \f{1}{(2\pi|\xi|)^{2s}+\om} d\xi\right)^{-1}. 
 	$$
 \end{lemma}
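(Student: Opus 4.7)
The natural approach is to recognize that this is a sharp multiplier inequality that can be handled entirely by Fourier inversion and Cauchy--Schwarz. Throughout, write
$$
\vertiii{u}_{H^s}^2 = \int_{\rn}\bigl((2\pi|\xi|)^{2s}+\om\bigr)|\hat u(\xi)|^2\,d\xi,
\qquad
I(\om):=\int_{\rn}\frac{d\xi}{(2\pi|\xi|)^{2s}+\om},
$$
noting that $I(\om)<\infty$ precisely because $2s>n$.

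The plan is first to establish the lower bound $c^2(\om)\ge 1/I(\om)$. For $u\in\cs$, Fourier inversion combined with a ``weight and de-weight'' trick gives, for every $x\in\rn$,
$$
|u(x)|\;\le\;\int_{\rn}|\hat u(\xi)|\,d\xi
\;=\;\int_{\rn}\frac{1}{\sqrt{(2\pi|\xi|)^{2s}+\om}}\cdot\sqrt{(2\pi|\xi|)^{2s}+\om}\,|\hat u(\xi)|\,d\xi,
$$
and Cauchy--Schwarz yields $|u(x)|^2\le I(\om)\,\vertiii{u}_{H^s}^2$. Taking the supremum over $x$ shows $J_\om[u]\ge 1/I(\om)$ for every nonzero $u\in\cs$, hence $c^2(\om)\ge 1/I(\om)$.

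Next I would verify that the candidate $\hat\phi(\xi)=C/((2\pi|\xi|)^{2s}+\om)$ saturates both inequalities. By Plancherel,
$$
\vertiii{\phi}_{H^s}^2=\int_{\rn}\bigl((2\pi|\xi|)^{2s}+\om\bigr)\cdot\frac{C^2}{\bigl((2\pi|\xi|)^{2s}+\om\bigr)^2}\,d\xi=C^2\,I(\om),
$$
and, since $\hat\phi$ is real-valued and of constant sign, $\|\phi\|_{L^\infty}=|\phi(0)|=|C|\,I(\om)$; this is the location where both the triangle inequality $|u(x)|\le\int|\hat u|$ and the Cauchy--Schwarz step above become equalities (the latter because $\sqrt{(2\pi|\xi|)^{2s}+\om}|\hat\phi(\xi)|$ is proportional to $1/\sqrt{(2\pi|\xi|)^{2s}+\om}$). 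Therefore $J_\om[\phi]=C^2 I(\om)/(C^2 I(\om)^2)=1/I(\om)$, matching the lower bound. This simultaneously identifies $c^2(\om)=1/I(\om)$ and shows that $\phi$ is a minimizer for every $C\neq 0$.

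There is no real obstacle here beyond being careful that $\phi\in H^s(\rn)$ (which follows from $\vertiii{\phi}_{H^s}^2=C^2 I(\om)<\infty$) and that one may legitimately evaluate $\phi$ pointwise at $0$, which is ensured by the embedding $H^s(\rn)\hookrightarrow C(\rn)$ for $s>n/2$. The one subtle point worth flagging is that the infimum defining $c^2(\om)$ is taken over Schwartz functions, but since $\cs$ is dense in $H^s$ and both $\vertiii{\cdot}_{H^s}$ and evaluation at $0$ are continuous on $H^s$ for $s>n/2$, the extremal $\phi$ (which lies in $H^s$) can be approximated to identify it as an $H^s$-minimizer, consistent with the formulation of the lemma.
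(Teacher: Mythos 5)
Your proof is correct, and it takes a genuinely different and more elementary route than the paper. You obtain the sharp constant directly: writing $u(x)=\int\hat u(\xi)e^{2\pi i x\cdot\xi}\,d\xi$, splitting the integrand as $\bigl((2\pi|\xi|)^{2s}+\om\bigr)^{-1/2}\cdot\bigl((2\pi|\xi|)^{2s}+\om\bigr)^{1/2}|\hat u(\xi)|$, and applying Cauchy--Schwarz gives $\|u\|_{L^\infty}^2\le I(\om)\,\vertiii{u}_{H^s}^2$ with $I(\om)=\int_{\rn}\bigl((2\pi|\xi|)^{2s}+\om\bigr)^{-1}d\xi$, and the candidate $\hat\phi=C/((2\pi|\xi|)^{2s}+\om)$ visibly saturates both the triangle inequality (at $x=0$, since $\hat\phi$ has constant sign) and the Cauchy--Schwarz step (since the two factors are proportional). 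The paper instead reaches the same conclusion by passing to the limit $N\to\infty$ in the regularized variational problems $I_{\om,N}[u]\to\min$ of Proposition \ref{prop:22}: it uses the uniform bounds of Lemma \ref{le:2} and \ref{le:20}, Arzel\`a--Ascoli compactness to extract a locally uniform limit $\phi$ of the minimizers $\phi_N$, passes to the limit in the Euler--Lagrange equation \eqref{400} to obtain the distributional equation $((-\De)^s+\om)\phi=c^2\de_0$, and then reads off the Fourier representation and the value $c^2(\om)$ by testing with $\phi$ itself. Your argument is shorter, self-contained, and makes the extremality transparent through the equality cases of the two inequalities; the paper's route is heavier but serves the surrounding narrative, since the approximating problems are precisely what tie the concentrated-nonlinearity profile equation \eqref{30} to the Sobolev minimization and produce the limiting Euler--Lagrange identity \eqref{240} used afterwards. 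Your closing remark about the domain of the infimum is the right point to flag: the extremizer lies in $H^s$ but not in $\cs$, and the density of $\cs$ in $H^s$ together with the continuity of $\vertiii{\cdot}_{H^s}$ and of $u\mapsto\|u\|_{L^\infty}$ (via $H^s\hookrightarrow C(\rn)$ for $s>n/2$) makes the two infima agree, which the paper glosses over.
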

 \begin{proof}
 	From Lemma \ref{le:20}, it follows that $\lim_N m_N(\om)=c^2(\om)$. In addition, as we have pointed out, maximizers can be taken, with the property $\|\phi_N\|_{H^s}\leq C(\om)$. As $H^s(\rn)$ embeds in $C^\ga(\rn), 0<\ga<s-\f{n}{2}$ and this is compact embedding on bounded domains, we can select 
 $$
 \phi_N: N^n \int_{\rn} V(N x) |\phi_N(x)|^{2\si+2} dx=1, 
 $$
  so that 
 $\phi_N$ is  uniformly convergent, on the compact subsets of $\rn$ 
 to $\phi\in H^s(\rn)$. 
 
 We will show that $\phi(0)=1$ and $\phi$ is in the form \eqref{220}. We have, for each $N\geq 1$, 
 	\begin{eqnarray*}
 		\left|1-|\phi(0)|^{2\si+2}\right| &\leq & N^n\int_{\rn} V(N x)\left||\phi_N(x)|^{2\si+2}-|\phi(0)|^{2\si+2}\right| dx\\
 		&\leq & C_\si (\|\phi_N\|_{L^\infty}^{2\si+1}+|\phi(0)|^{2\si+1}) 
 		N^n\int_{\rn} V(N x) |\phi_N(x)-\phi(0)|dx. 
 	\end{eqnarray*}
 	But $\|\phi_N\|_{L^\infty}\leq \|\phi_N\|_{H^s}<C(\om)$, while 
 	$$
 	|\phi_N(x)-\phi(0)|\leq |\phi_N(x)-\phi_N(0)|+|\phi_N(0)-\phi(0)|\leq C_\ga |x|^\ga + |\phi_N(0)-\phi(0)|. 
 	$$
 	Plugging this back in our estimate for $|1-|\phi(0)|^{2\si+2}|$, we obtain, for each $0<\ga<s-\f{n}{2}$, 
 	$$
 	|1-|\phi(0)|^{2\si+2}|\leq  C |\phi_N(0)-\phi(0)|+ C N^n\int_{\rn} V(N x)  |x|^\ga dx\leq C |\phi_N(0)-\phi(0)| + C N^{-\ga}.
 	$$
 	Clearly, the expression on the right goes to zero as $N\to \infty$, as 
 	$\phi_N\rightrightarrows_{\bb} \phi$. By adjusting the sign of $\phi_N$, if necessary, this implies that we can take  $\phi(0)=\lim_N \phi_N(0)=1$.
 	
 	Next, $\phi_N$ satisfies the Euler-Lagrange equation \eqref{400}. Test this equation with $\psi$. We obtain 
 	\begin{equation}
 	\label{230} 
 	\dpr{\phi_N}{((-\De)^s+\om)\psi}=m_N(\om) N^n \int_{\rn} V(N x) |\phi_N|^{2\si} \phi_N(x) \psi(x) dx.
 	\end{equation}
 	Taking limits in $N$ then yields, after taking into account $\phi(0)=1$, 
 	\begin{equation}
 	\label{235} 
 	\dpr{\phi}{((-\De)^s+\om)\psi}=c^2(\om) \psi(0).
 	\end{equation}
 In other words, $\phi$ satisfies the equation 
 \begin{equation}
 \label{240}
 ((-\De)^s+\om)\phi-c^2 \de_0=0.
 \end{equation}
 	in a distributional sense. 
 	
 	By taking   $\psi$ in \eqref{230}, to be an appropriate approximation of the function $\cg^\om_s(\cdot+x)$, we conclude that 
 	$$
 	\phi(x) = const. \cg^\om_s(x)
 	$$
 	which is of course the same as \eqref{220}. Additionally, by testing \eqref{240} by $\phi$ itself, we obtain 
 	$$
 	\|(-\De)^{\f{s}{2}} \phi\|_{L^2}^2 + \om \|\phi\|_{L^2}^2 = c^2 \phi(0)^2=c^2.
 	$$
 	This shows that $\phi$ is a minimizer for $\min_{u\in H^s} J_\om[u]$ and so any function in the form \eqref{220} is one as well. Also,  
 	\begin{equation}
 	\label{62}
 	c^2(\om)= \f{\|(-\De)^{\f{s}{2}} \cg_s^\om\|_{L^2}^2 + \om \|\cg_s^\om\|_{L^2}^2 }{(\cg_s^\om(0))^2}=
 	\left(\int_{\rn} \f{1}{(2\pi|\xi|)^{2s}+\om} d\xi\right)^{-1}.  
 	\end{equation}
 \end{proof}
We now state a result that describes the solutions of \eqref{30}. 
\begin{lemma}
	\label{le:60} 
	The non-trivial solutions to \eqref{30}, with $\phi(0)>0$  are given by 
	\begin{equation}
	\label{250} 
	\hat{\phi}(\xi)= \left(\int_{\rn} \f{1}{(2\pi|\xi|)^{2s}+\om}  d\xi\right)^{-(1+\f{1}{2\si})} \f{1}{(2\pi|\xi|)^{2s}+\om}. 
	\end{equation}
\end{lemma}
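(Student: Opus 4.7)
The plan is to apply the Fourier transform to the profile equation \eqref{30} and solve for $\hat\phi$ directly, then impose a self-consistency condition at $x=0$. Since $\phi \in H^s(\rn) \cap C(\rn)$ with $s > \f n2$, all the quantities in the argument make sense; in particular $\phi(0)$ is well-defined and $\widehat{|\phi(0)|^{2\sigma}\phi(0)\,\de_0} = |\phi(0)|^{2\sigma}\phi(0)$ as a constant function of $\xi$.

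First, I would take the Fourier transform of \eqref{30}, yielding
\[
\bigl((2\pi|\xi|)^{2s} + \om\bigr)\,\hat\phi(\xi) \;=\; |\phi(0)|^{2\sigma}\phi(0).
\]
Dividing through (this is legitimate since $\om > 0$ and $s > \f n2$, so the symbol is nonvanishing and integrable) gives
\[
\hat\phi(\xi) \;=\; |\phi(0)|^{2\sigma}\phi(0)\cdot \widehat{\cg_s^\om}(\xi),
\]
equivalently $\phi(x) = |\phi(0)|^{2\sigma}\phi(0)\,\cg_s^\om(x)$. This already identifies $\phi$ as a scalar multiple of the Green's function $\cg_s^\om$, and it remains only to pin down the scalar.

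Next I would evaluate the identity $\phi(x) = |\phi(0)|^{2\sigma}\phi(0)\,\cg_s^\om(x)$ at $x=0$. Using the assumption $\phi(0) > 0$ to divide both sides by $\phi(0)$, one obtains the consistency relation
\[
1 \;=\; \phi(0)^{2\sigma}\,\cg_s^\om(0),
\]
which uniquely determines $\phi(0) = \cg_s^\om(0)^{-1/(2\sigma)}$. Substituting this value back into $\phi(x) = \phi(0)^{2\sigma+1}\cg_s^\om(x)$ gives
\[
\phi(x) \;=\; \cg_s^\om(0)^{-(1+\frac{1}{2\sigma})}\,\cg_s^\om(x),
\]
and passing to the Fourier side, together with the Fourier-inversion formula
\[
\cg_s^\om(0) \;=\; \int_{\rn} \widehat{\cg_s^\om}(\xi)\,d\xi \;=\; \int_{\rn}\f{1}{(2\pi|\xi|)^{2s}+\om}\,d\xi,
\]
produces exactly formula \eqref{250}.

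There is essentially no hard step here; the only subtleties are (i) justifying that the integral $\cg_s^\om(0)$ is finite, which follows because $s > \f n2$ ensures $\widehat{\cg_s^\om} \in L^1(\rn)$, and (ii) noting that if $\phi(0) = 0$, then the Fourier identity forces $\hat\phi \equiv 0$, so $\phi \equiv 0$; thus every nontrivial solution has $\phi(0) \neq 0$ and the sign convention $\phi(0) > 0$ singles out precisely the one formula above (the sign-flipped counterpart being its obvious companion when $\sigma$ is such that $|\phi(0)|^{2\sigma}\phi(0)$ is odd in $\phi(0)$).
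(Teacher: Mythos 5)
Your proposal is correct and follows essentially the same route as the paper: take the Fourier transform of \eqref{30} to identify $\phi$ as the scalar multiple $|\phi(0)|^{2\si}\phi(0)\,\cg_s^\om$ of the Green's function, then use Fourier inversion at $x=0$ to obtain the self-consistency relation $|\phi(0)|^{2\si}=\left(\int_{\rn}\f{1}{(2\pi|\xi|)^{2s}+\om}\,d\xi\right)^{-1}$, which yields \eqref{250}. The only (minor) difference is that the paper justifies the first step by appealing to the distributional argument of its Lemma \ref{le:40} rather than dividing by the symbol directly, but the content is the same.
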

 \begin{proof}
 	We can proceed as in the proof of Lemma \ref{le:40} to see that 
 	$$
 	\hat{\phi}(\xi)=|\phi(0)|^{2\si} \phi(0) \f{1}{(2\pi|\xi|)^{2s}+\om} . 
 	$$
 	In order to determine   $\phi(0)$, we apply the inverse Fourier transform to obtain an equation for it as follows 
 	$$
 	\phi(0)=\int_{\rn} \hat{\phi}(\xi) d\xi = |\phi(0)|^{2\si} \phi(0) \int_{\rn} \f{1}{(2\pi|\xi|)^{2s}+\om}  d\xi
 	$$
 	It follows that 
 	$$
 	|\phi(0)|^{2\si}=\left(\int_{\rn} \f{1}{(2\pi|\xi|)^{2s}+\om}  d\xi\right)^{-1},
 	$$
 	which proves the claim. 
 \end{proof}
 \begin{rmk}
 	\label{rmk:2}
 	Note that the operator $\cl_\pm$ have the form 
 	\begin{eqnarray*}
 		\cl_- &=&  (-\De)^s +\om -  |\phi(0)|^{2\sigma}\delta_0=(-\De)^s +\om -  c^2(\om) \delta_0\\
 			\cl_+ &=&  (-\De)^s +\om -  (2\si+1) c^2(\om) \delta_0. 
 	\end{eqnarray*}
 \end{rmk}
 \subsection{The spectrum of $(-\De)^s +\om -  \mu \delta_0$}
 \label{sec:3.4} 
 In this section, we develop some tools to study the bottom of the spectrum of the operators 
 $(-\De)^s +\om -  \mu \delta_0$, depending on the value of $\mu$. More specifically, we have the following result. 
 \begin{proposition}
 	\label{prop:23} 
 	Let $s>\f{n}{2}, \om>0$  and $L_\mu=(-\De)^s +\om -  \mu \delta_0$ be the self-adjoint operator introduced in Lemma \ref{le:12}. Then, 
 	\begin{itemize}
 		\item If $\mu<c^2(\om)$, the operator $L_\mu$ has one simple negative eigenvalue, 
 		$-\la_{\om, \mu}<0$, with eigenfunction $\Psi_0: \widehat{\Psi}_0(\xi)=\f{1}{(2\pi |\xi|)^{2s}+\om+\la_\om}$. For the rest of the spectrum 
 		$$
 		\si(L_\mu)\setminus \{-\la_{\om, \mu}\}\subset [\om,\infty).
 		$$
 		In particular, $L_\mu|_{\{\Psi_0\}^\perp} \geq \om$. 
 		\item If $\mu=c^2(\om)$, $L_\mu\geq 0$, $0$ is a simple eigenvalue and the rest of the spectrum, there is 
 		$
 		\si(L_\mu)\setminus \{0\}\subset [\om,\infty).
 		$
 		In particular, $L_\mu|_{\{\Psi_0\}^\perp} \geq \om$. 
 		\item If $\mu>c^2(\om)$, there is a simple eigenvalue $\la_\mu\in (0, \om)$, with eigenfunction \\ 
 		$\Psi_0: \widehat{\Psi}_0(\xi)=\f{1}{(2\pi |\xi|)^{2s}+\om-\la_\om}$ and 
 		$\si(L_\mu)\setminus \{\la_\mu\}\subset [\om,\infty)$. 
 		In particular,  $L_\mu|_{\{\Psi_0\}^\perp} \geq \la_\mu>0$. 
 	\end{itemize}
 \end{proposition}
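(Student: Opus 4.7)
The plan is to reduce the eigenvalue problem for $L_\mu$ below the essential spectrum to a scalar secular equation, then extract the position and sign of the unique discrete eigenvalue by a monotonicity analysis. The first step is to show that $\sigma_{\mathrm{ess}}(L_\mu) = [\om, \infty)$. For the inclusion $[\om, \infty) \subset \sigma_{\mathrm{ess}}(L_\mu)$, I would build, for each $E \geq \om$, a Weyl sequence for $(-\De)^s + \om$ of the form $\psi_k(x) = k^{-n/2} \phi((x - y_k)/k) e^{2\pi i \xi_0 \cdot x}$ with $|2\pi \xi_0|^{2s} = E - \om$ and $|y_k| \to \infty$; such a sequence has support disjoint from the origin, so $\psi_k(0) = 0$, and by \eqref{112} it witnesses the same energy for $L_\mu$. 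For the reverse inclusion I would use a Krein-type resolvent formula obtained directly from the domain description \eqref{65}: for each $E < \om$ with $\mu\, \cg_s^{\om - E}(0) \neq 1$, solving $(L_\mu - E)\Psi = f$ by the ansatz $\Psi = R_0(E) f + \mu \Psi(0) \cg_s^{\om - E}$, where $R_0(E) := ((-\De)^s + \om - E)^{-1}$, yields an explicit rank-one modification of $R_0(E)$, bounded off the resonant set; hence the spectrum of $L_\mu$ below $\om$ is purely discrete.

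For the discrete eigenvalues, solving $L_\mu \Psi = E \Psi$ with $\Psi$ in the domain \eqref{65} is equivalent to $((-\De)^s + \om - E)\Psi = \mu \Psi(0) \de_0$, i.e., $\Psi = \mu \Psi(0) \cg_s^{\om - E}$. If $\Psi(0) = 0$ then $\Psi \in H^{2s}(\rn)$ and $((-\De)^s + \om - E)\Psi = 0$, forcing $\Psi \equiv 0$ for $E < \om$; hence $\Psi(0) \neq 0$ and the equation reduces to the scalar secular relation
$$
\mu\, \cg_s^{\om - E}(0) = 1, \qquad \cg_s^{\om - E}(0) = \int_{\rn} \f{d\xi}{(2\pi |\xi|)^{2s} + \om - E}.
$$
Since $s > \f n2$, the integrand is non-integrable at the origin, so $\cg_s^{\om - E}(0) \to \infty$ as $E \uparrow \om$, while $\cg_s^{\om - E}(0) \to 0$ as $E \to -\infty$ by dominated convergence, and the function is smooth and strictly increasing in $E$. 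Thus exactly one $E_\mu \in (-\infty, \om)$ solves the equation, and its sign is determined by comparing $1/\mu$ with $\cg_s^\om(0) = 1/c^2(\om)$ via \eqref{62}, yielding the claimed trichotomy. Simplicity of $E_\mu$ is automatic since every eigenfunction is forced to be a scalar multiple of $\cg_s^{\om - E_\mu}$.

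The coercivity statements on $\{\Psi_0\}^\perp$ then follow from the min-max principle: $E_\mu$ is the unique eigenvalue below $\om$ and the rest of the spectrum lies in $[\om, \infty)$ by the first step, so the second min-max value of $L_\mu$ is at least $\om$. This yields $L_\mu|_{\{\Psi_0\}^\perp} \geq \om$ in the first two cases and $L_\mu|_{\{\Psi_0\}^\perp} \geq \om > \la_\mu$ in the third case. I expect the main obstacle to be the rigorous identification of $\sigma_{\mathrm{ess}}(L_\mu)$, specifically verifying the Krein formula against the domain description \eqref{65} and ruling out any anomalous behavior such as accumulation of eigenvalues near $\om$. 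Once that spectral picture is in place, the rest of the argument is merely the analysis of a single monotone scalar equation.
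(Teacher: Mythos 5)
Your proposal is correct, and the eigenvalue half of it is essentially the paper's own argument: both reduce $L_\mu\Psi=E\Psi$ for $E<\om$ to the secular equation $\mu\,\cg_s^{\om-E}(0)=1$ and exploit the strict monotonicity of $E\mapsto\cg_s^{\om-E}(0)$, its divergence as $E\uparrow\om$ (valid precisely because $s>\f n2$), and the normalization $\cg_s^{\om}(0)=1/c^2(\om)$ from \eqref{62}. Where you genuinely diverge is in locating the rest of the spectrum. The paper never computes the essential spectrum: it assumes for contradiction that $\si(L_\mu)$ meets $(E_\mu+\de,\om-\de)$, takes a normalized $\Psi$ in the range of the corresponding spectral projection, corrects it to $\tilde\Psi=\Psi-\f{\Psi(0)}{\Psi_0(0)}\Psi_0$ so that $\tilde\Psi(0)=0$, and then plays the form identity \eqref{112} (which gives $\dpr{L_\mu\tilde\Psi}{\tilde\Psi}\geq\om\|\tilde\Psi\|^2\geq\om$) against the spectral upper bound $\dpr{L_\mu\tilde\Psi}{\tilde\Psi}\leq\om-\de$. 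Your route is heavier but workable; note, however, that the Weyl-sequence half is not needed for this proposition, and that your Krein-type ansatz $\Psi=R_0(E)f+\mu\Psi(0)\cg_s^{\om-E}$ already does the whole job below $\om$: writing $g:=R_0(E)f+\mu\Psi(0)(\cg_s^{\om-E}-\cg_s^{\om})$, one checks on the Fourier side that $g\in H^{2s}$, so $\Psi\in D(\cl_c)$ per \eqref{65} and $(L_\mu-E)\Psi=f$, with $\Psi(0)$ uniquely determined and controlled by $\|f\|_{L^2}$ whenever $\mu\,\cg_s^{\om-E}(0)\neq 1$. This places every non-secular $E<\om$ in the resolvent set with no mention of essential spectrum, so the ``main obstacle'' you anticipate is entirely avoidable. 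One caution on the trichotomy: your monotonicity yields $E_\mu<0$ exactly when $\mu>c^2(\om)$ and $E_\mu\in(0,\om)$ exactly when $\mu<c^2(\om)$. This agrees with the paper's proof and with Corollary \ref{cor:2} (where $\cl_+$, corresponding to $\mu=(2\si+1)c^2>c^2$, carries the negative eigenvalue), but it is opposite to the literal wording of the first and third bullets of the statement, whose hypotheses appear to be transposed; spell the conclusion out explicitly rather than deferring to ``the claimed trichotomy,'' so that the sign convention is unambiguous.
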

 \begin{proof}
 	Assume first $\mu>c^2$. We would like to formally analyze the eigenvalue problem associated with the lowest eigenvalue of $L_\mu$. So, we are looking for $f\neq 0, f\in D(L_\mu)$, 
 	so that $L_\mu f=-\la f$ for some $\la>0$. This is the equation 
 	\begin{equation}
 	\label{260} 
 	((-\De)^s +\om+\la) f= \mu f(0) \de_0.
 	\end{equation}
 	Arguing as in the proof of Lemma \ref{le:40}, by taking Fourier transform etc., we find that all possible solutions are in the form
 	$$
 	\hat{f}(\xi)= \f{\mu f(0)}{(2\pi |\xi|)^{2s}+\om+\la}. 
 	$$
 	Clearly, $f\in D(L_\mu)$ and we need to see that there exists $\la>0$, so that it solves \eqref{260}. To this end, we have 
 	$$
 	f(0)=\int_{\rn} \hat{f}(\xi) d\xi =\mu f(0) \int_{\rn} \f{1}{(2\pi |\xi|)^{2s}+\om+\la}d\xi. 
 	$$
 	As we seek non-trivial solutions $f$ (and hence $f(0)\neq 0$), this amounts to finding $\la$,  so that for the given $\om$, we have 
 	\begin{equation}
 	\label{270} 
 	\mu \int_{\rn} \f{1}{(2\pi |\xi|)^{2s}+\om+\la}d\xi=1.
 	\end{equation}
 	We claim that under the condition $\mu>c^2$, there is exactly one solution $\la=\la_{\om, \mu}\in (0, \infty)$. Indeed,  consider the continuous and decreasing function 
 	$$
 	h(\la):=\mu \int_{\rn} \f{1}{(2\pi |\xi|)^{2s}+\om+\la}d\xi-1.
 	$$
 	Computing its limits at the ends of the interval
 	$$
 	\lim_{\la\to 0+} h(\la) = \mu \int_{\rn} \f{1}{(2\pi |\xi|)^{2s}+\om}d\xi-1=\f{\mu}{c^2} -1>0, 
 	\lim_{\la\to +\infty} h(\la)=-1, 
 	$$
 	implies that there is an unique eigenvalue $\la_{\om,\mu}>0$. Moreover, the corresponding eigenfunction is, up to a multiplicative constant 
 $$
 \widehat{\Psi}_0(\xi)=\f{1}{(2\pi |\xi|)^{2s}+\om+\la_{\om,\mu}}
 $$
 We now prove the statement about  the rest of the spectrum. Consider the spectral decomposition of the self-adjoint operator $L_\mu$. Assume for a contradiction that for any $\de>0$, we have that  $\si(L_\mu)\cap (-\la_{\om,\mu}+\de, \om-\de)\neq \emptyset$. Let 
 $\Psi\in Image ({\mathbb P}_{(-\la_{\om,\mu}+\de, \om-\de)})$ (i.e. $\Psi={\mathbb P}_{(-\la_{\om,\mu}+\de, \om-\de)}\Psi$) and then normalize it, that is  $\|\Psi\|_{L^2}=1$. As 
 $\Psi_0(0)=\int_{\rn} \f{1}{(2\pi |\xi|)^{2s}+\om+\la_{\om,\mu}} d\xi>0$, consider the well-defined element of $D(L_\mu)$, 
 $$
 \tilde{\Psi}:=\Psi-\f{\Psi(0)}{\Psi_0(0)} \Psi_0.
 $$
 Note that $\tilde{\Psi}(0)=0$, so according to \eqref{112}, we have,
 $$
 \dpr{L_\mu \tilde{\Psi}}{\tilde{\Psi}}=\|(-\De)^{\f{s}{2}} \tilde{\Psi}\|_{L^2}^2+ 
 \om \| \tilde{\Psi}\|_{L^2}^2\geq \om \| \tilde{\Psi}\|_{L^2}^2\geq \om \|\Psi\|_{L^2}^2 = \om.
 $$
 where  we have used  that $\Psi\perp \Psi_0$, and hence $\|\tilde{\Psi}\|_{L^2}^2=\|\Psi\|_{L^2}^2+ \f{\Psi^2(0)}{\Psi_0^2(0)} \|\Psi_0\|_{L^2}^2\geq \|\Psi\|_{L^2}^2=1$. 
 
 On the other hand, again by $\Psi\perp \Psi_0, L_\mu \Psi\perp \Psi_0$, and the properties of the spectral projections, 
 \begin{eqnarray*}
 \dpr{L_\mu \tilde{\Psi}}{\tilde{\Psi}}=\dpr{L_\mu \Psi}{\Psi}+\f{\Psi^2(0)}{\Psi_0^2(0)}\dpr{L_\mu \Psi_0}{\Psi_0}\leq (\om-\de) -\la_{\om,\mu} \f{\Psi^2(0)}{\Psi_0^2(0)}\leq \om-\de.
 \end{eqnarray*} 
 Clearly, the two estimates that we have obtained for $\dpr{L_\mu \tilde{\Psi}}{\tilde{\Psi}}$ are contradictory, which is due to the assumption $\si(L_\mu)\cap (-\la_{\om,\mu}, \om-\de)\neq \emptyset$. Thus, $\si(L_\mu)\cap (-\la_{\om,\mu}, \om)= \emptyset$ or $\si(L_\mu)\setminus \{-\la_{\om,\mu}\}\subset [\om,\infty)$, which was the claim. 
 
 The proof for $\mu=c^2$ is along similar lines. Indeed, for any test function $\Psi\in H^s$, we have 
 $$
 \dpr{L_\mu \Psi}{\Psi}=\|(-\De)^{\f{s}{2}} \Psi\|_{L^2}^2+ 
 \om \| \Psi\|_{L^2}^2-c_s^2 |\Psi(0)|^2\geq 0,
 $$
 by the definition of $c^2=\inf J_\om[\Psi]$. Hence, $L_\mu\geq 0$. 
 Furthermore, by direct inspection $L_\mu[\cg_\om^s]=0$, whence $0$ is an eigenvalue (and it would have to be at the bottom of the spectrum). Finally, $\si(L_\mu)\setminus \{0\} \subset [\om,\infty)$ is shown in the exact same way as in the case $\mu>c^2$. 
 
 For the case $\mu<c^2$, we can similarly identify an unique $\la_{\om,\mu}\in (0, \om)$, so that 
 $$
 \mu \int_{\rn} \f{1}{(2\pi |\xi|)^{2s}+\om-\la}d\xi=1.
 $$
 This $\la_{\om,\mu}>0$ is an eigenvalue for $L_\mu$, with eigenfunction, 
 $\Psi_0: \widehat{\Psi}_0(\xi)=\f{1}{(2\pi |\xi|)^{2s}+\om-\la}$. Moreover, $\si(L_\mu)\setminus\{\la_{\om,\mu}\} \subset [\om,\infty)$ is proved in the same fashion as above. 
 \end{proof}
 As a direct consequence of the results of Proposition \ref{prop:23} and Remark \ref{rmk:2}, we have the following corollary. 
 \begin{corollary}
 	\label{cor:2}
 	Let $s>\f{n}{2}$, $\om>0$, $\si>0$. Then, 
 	\begin{itemize}
 		\item $\cl_-\geq 0$, $0$ is a simple eigenvalue, with eigenfunction $\cg_s^\om$ and 
 		$$
 		\si(\cl_-)\setminus \{0\}\subset [\om,\infty)
 		$$
 		Also, $\cl_-|_{\{\cg_s^\om\}^\perp}\geq \om$. 
 		\item $\cl_+$ has a simple negative eigenvalue, with an eigenfunction $\Psi_0$. 
 		Also, 
 		$$
 		\cl_+|_{\{\Psi_0\}^\perp}\geq \om>0.
 		$$
 		
 	\end{itemize}
 	
 \end{corollary}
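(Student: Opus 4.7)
The plan is to recognize that both $\cl_-$ and $\cl_+$ fall under the framework of Proposition \ref{prop:23}. By Remark \ref{rmk:2}, they are instances of the self-adjoint operator $L_\mu = (-\De)^s + \om - \mu\de_0$, with coupling constants $\mu = c^2(\om)$ for $\cl_-$ and $\mu = (2\si+1)c^2(\om)$ for $\cl_+$. The two bullets of the corollary will then follow by reading off the corresponding cases of Proposition \ref{prop:23}.

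For $\cl_-$, the coupling $\mu = c^2(\om)$ places us in the critical case (second bullet of Proposition \ref{prop:23}). This immediately yields $\cl_- \geq 0$, a simple zero eigenvalue, the containment $\si(\cl_-)\setminus\{0\} \subset [\om,\infty)$, and the coercivity $\cl_-|_{\{\Psi_0\}^\perp}\geq \om$. To identify the null-vector explicitly as $\cg_s^\om$, I would do a short direct computation: since $((-\De)^s+\om)\cg_s^\om = \de_0$ holds in $\cs'$ by construction, and since Lemma \ref{le:40} (formula \eqref{62}) delivers the normalization $c^2(\om)\cg_s^\om(0) = 1$, one has
\[
\cl_- \cg_s^\om = ((-\De)^s+\om)\cg_s^\om - c^2(\om)\cg_s^\om(0)\,\de_0 = \de_0 - \de_0 = 0,
\]
so $\cg_s^\om \in \ker(\cl_-)$. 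Simplicity then forces $\ker(\cl_-) = \mathrm{span}\{\cg_s^\om\}$, and the orthogonal coercivity is simply that of Proposition \ref{prop:23} rephrased with this explicit eigenvector.

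For $\cl_+$, the coupling satisfies $\mu = (2\si+1)c^2(\om) > c^2(\om)$ because $\si>0$, so we invoke the super-critical case of Proposition \ref{prop:23}. This directly produces a simple negative eigenvalue with eigenfunction $\Psi_0$, together with the spectral containment of the remainder in $[\om,\infty)$; the latter gives $\cl_+|_{\{\Psi_0\}^\perp} \geq \om > 0$, finishing the second bullet. There is no genuine obstacle here: the entire statement is a bookkeeping consequence of Proposition \ref{prop:23} and Lemma \ref{le:40}, the only mildly non-automatic step being the verification that $\cg_s^\om$ is the specific zero-mode of $\cl_-$, which the normalization $c^2(\om) = (\cg_s^\om(0))^{-1}$ renders immediate.
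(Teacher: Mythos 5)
Your proposal is correct and is essentially the paper's own argument: the authors present Corollary \ref{cor:2} as a direct consequence of Proposition \ref{prop:23} and Remark \ref{rmk:2}, which is exactly the bookkeeping you perform, and the identification of $\cg_s^\om$ as the zero mode of $\cl_-$ via the normalization $c^2(\om)\,\cg_s^\om(0)=1$ appears verbatim in the proof of the critical case of Proposition \ref{prop:23}. One point in your favor: the first and third bullets of Proposition \ref{prop:23} as printed appear to have their conclusions transposed (the proof shows that $\mu>c^2(\om)$ is the case yielding the negative eigenvalue), and you correctly read the supercritical case $\mu=(2\si+1)c^2(\om)>c^2(\om)$ as the one producing the simple negative eigenvalue of $\cl_+$.
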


\section{Stability of the waves} 
\label{sec:4} 
 In this section, we identify the regions of stability for the waves. We start with a short introduction in the theory of the Hamiltonian instability index, as developed in \cite{KKS, KKS2, KP}.  
 \subsection{The Hamiltonian instability index theory} 
 We are concerned with a Hamiltonian eigenvalue problem of the form 
 \begin{equation}
 \label{s:12} 
  \ci \ck f=\la f, 
 \end{equation}
where $\ci^*=-\ci, \ck^*=\ck$, $\ci$ is bounded and invertible, so that $\ci^{-1}: Ker(\ck)\to Ker(\ck)^\perp$. 

We would analyze   the number of unstable eigenvalues of the eigenvalue  problem \eqref{s:12}. To this end, we assume that the Morse index of $\ck$ is finite, that is $n(\ck)=\# \{\mu\in \si_{p.p.}(\ck), \mu<0\}<\infty$ and $dim(Ker(\ck))<\infty$, say $Ker(\ck)=span\{\psi_j, j=1, \ldots, N\}$. Introduce a scalar matrix $\cd$, with entries\footnote{Note that since  $\ci^{-1}: Ker(\ck)\to Ker(\ck)^\perp$, the operator $\ck^{-1}$ is well-defined on $\ci^{-1} \psi$}
$$
\cd_{i j}=\dpr{\ck^{-1} \ci^{-1} \psi_i}{\ci^{-1} \psi_j}
$$
 Then, according to \cite{KKS, KKS2, KP}, we have the following formula 
 \begin{equation}
 \label{se:10} 
 k_r+ k_c+ k_0^{\leq 0}= n(\cl)-n(\cd), 
 \end{equation}
 where $k_r$ is the number of real and positive solutions $\la$ in \eqref{s:12},   accounting for the real unstable modes, $k_c$ is the number of solutions $\la$ in \eqref{s:12} with positive real part, while  finally $k_0^{\leq 0}$ is the number of the dimension of the marginally  stable directions, corresponding to purely imaginary eigenvalue with negative Krein index. Note that by Hamiltonian symmetry considerations, both $k_c,   k_0^{\leq 0}$ are even non-negative integers. 
 
 A very immediate corollary of the considerations above is the following statement, which is often referred to as the Vakhitov-Kolokolov stability condition. 
 \begin{corollary}
 	\label{s:10} Let  $\ck$ be self-adjoint, with $n(\ck)=1, dim(Ker(\ck))=1$, say  $Ker(\ck)=span\{\Psi\}$. Assume that $\ci$ also satisfy the assumptions listed above. Then, the Hamiltonian eigenvalue problem \eqref{s:12} is stable if and only if 
 	\begin{equation}
 	\label{s:22} 
 	\dpr{\ck^{-1} \ci^{-1} \Psi}{\ci^{-1} \Psi}<0.
 	\end{equation}
 \end{corollary}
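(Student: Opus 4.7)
The plan is to apply the general index identity \eqref{se:10} directly in the special setting $n(\ck)=1$, $\dim Ker(\ck)=1$, and read off both directions of the biconditional from the sign of the single scalar entry of the matrix $\cd$.

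First I would observe that under the standing hypotheses, the matrix $\cd$ is a $1\times 1$ matrix whose single entry is exactly
\[
\cd = \dpr{\ck^{-1}\ci^{-1}\Psi}{\ci^{-1}\Psi}.
\]
Here the inverse $\ck^{-1}$ makes sense on $\ci^{-1}\Psi$, because $\ci^{-1}:Ker(\ck)\to Ker(\ck)^\perp$ maps $\Psi$ into the range of $\ck$. Consequently $n(\cd)=1$ when $\dpr{\ck^{-1}\ci^{-1}\Psi}{\ci^{-1}\Psi}<0$ and $n(\cd)=0$ when this quantity is strictly positive. With $n(\ck)=1$, the index identity then reads
\[
k_r+k_c+k_0^{\leq 0} \;=\; 1-n(\cd).
\]

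Next I would split into the two cases. If $\dpr{\ck^{-1}\ci^{-1}\Psi}{\ci^{-1}\Psi}<0$, then the right-hand side is $0$, forcing $k_r=k_c=k_0^{\leq 0}=0$. Since these three quantities account for all possible obstructions to spectral stability of the Hamiltonian eigenvalue problem \eqref{s:12} (real positive eigenvalues, quadruples of complex eigenvalues with nonzero real part, and purely imaginary eigenvalues of negative Krein signature), we conclude spectral stability. Conversely, if $\dpr{\ck^{-1}\ci^{-1}\Psi}{\ci^{-1}\Psi}>0$, the sum equals $1$; since $k_c$ and $k_0^{\leq 0}$ are both even by Hamiltonian symmetry, the only possibility is $k_r=1$, $k_c=k_0^{\leq 0}=0$, so there is a real positive eigenvalue and the problem is spectrally unstable.

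The only delicate point is the boundary case $\dpr{\ck^{-1}\ci^{-1}\Psi}{\ci^{-1}\Psi}=0$, where $\cd$ is singular and the index formula in its stated form does not immediately apply. I would handle this by a perturbation argument: the Vakhitov--Kolokolov quantity is continuous under small perturbations of the parameters, and a degeneracy of $\cd$ signals the crossing of an eigenvalue of $\ci\ck$ through the origin, so such a parameter value cannot lie in the interior of a spectrally stable region. Since the statement of the corollary is about a strict inequality, we may safely exclude this transitional configuration. This degenerate case is the only obstacle I expect, and it is cleanly resolved once one notes that the threshold $\dpr{\ck^{-1}\ci^{-1}\Psi}{\ci^{-1}\Psi}=0$ is itself the borderline between stability and instability regimes.
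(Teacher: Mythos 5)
Your proposal is correct and follows essentially the same route as the paper: apply the index formula \eqref{se:10} with $n(\ck)=1$ and a $1\times 1$ matrix $\cd$, use the evenness of $k_c$ and $k_0^{\leq 0}$ to conclude $k_r=1-n(\cd)$, and read off stability from the sign of $\dpr{\ck^{-1}\ci^{-1}\Psi}{\ci^{-1}\Psi}$. Your explicit remark on the degenerate case $\cd=0$ is a reasonable addition that the paper passes over silently, but it does not change the argument.
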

 Indeed, in such a setup, the matrix $\cd$ is one dimensional matrix. Also,  the right-hand side of \eqref{se:10} is either $0$ or $1$, whence $k_r=n(\cl)-n(\cd)=1-n(\cd)$ and stability is equivalent to $n(\cd)=1$, which is exactly the condition \eqref{s:22}. 
 
 \subsection{Instability index count for \eqref{412}}
In our specific case, we need to apply the instability index counting theory to the eigenvalue problem \eqref{412}. Recall that  $\cj^*=-\cj=\cj^{-1}$, while $\cl=\left(\begin{array}{cc}
	\cl_- & 0 \\ 0 & \cl_+
\end{array}\right)$, whence 
$$
n(\cl)=n(\cl_+)+n(\cl_-)=1+0=1,
$$
due to the results of Corollary \ref{cor:2}. Also, again by the description in Corollary \ref{cor:2}, 
$$
Ker(\cl)=\left(\begin{array}{c}
Ker(\cl_-) \\ 0
\end{array}\right)+\left(\begin{array}{c}
0 \\ Ker(\cl_+)
\end{array}\right)=span\left(\begin{array}{c}
\phi_\om \\ 0
\end{array}\right).
$$
 It follows that Corollary \ref{s:10} is applicable to the eigenvalue problem \eqref{412}, and in fact the spectral stability of it is equivalent to  the condition 
 \begin{equation}
 \label{s:50} 
 \dpr{\cl_+^{-1} \phi_\om}{\phi_\om}<0.
 \end{equation}
 Since, $\phi_\om=c \cg_s^\om$, it suffice to compute 
 $\dpr{\cl_+^{-1} \cg^s_\om}{\cg^s_\om}$. We accomplish this in the following proposition. 
 \begin{proposition}
 	\label{prop:s:26} 
 	Let $n\geq 1$, $\om>0$, $\si>0$ and $s>\f{n}{2}$. Then, 
 	$$
 	sgn\dpr{\cl_+^{-1} \phi_\om}{\phi_\om}=sgn \dpr{\cl_+^{-1} \cg^s_\om}{\cg^s_\om}=sgn \left(\si-\f{2s-n}{n} \right). 
 	$$
 	In particular, the waves $\phi_\om$ are spectrally stable if and only if 
 	$$
 0<	\si<\f{2 s}{n} - 1. 
 	$$
 \end{proposition}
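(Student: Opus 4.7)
\bigskip

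\noindent\textbf{Proof proposal.} Since $\phi_\om(x)=\cg_s^\om(x)/(\cg_s^\om(0))^{1+1/(2\sigma)}$ is a positive scalar multiple of $\cg_s^\om$, the first reduction is that $\operatorname{sgn}\dpr{\cl_+^{-1}\phi_\om}{\phi_\om}=\operatorname{sgn}\dpr{\cl_+^{-1}\cg_s^\om}{\cg_s^\om}$, so I only need to compute the latter. Note $\cl_+$ corresponds to $\mu=(2\sigma+1)c^2(\om)>c^2(\om)$ in Proposition \ref{prop:23}, hence by Corollary \ref{cor:2} the lowest eigenvalue is strictly negative while the rest of the spectrum lies in $[\om,\infty)$; in particular $\cl_+$ is invertible.

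To solve $\cl_+\psi=\cg_s^\om$, I would rewrite it as $((-\Delta)^s+\om)\psi=\cg_s^\om+(2\sigma+1)c^2(\om)\psi(0)\de_0$ and invert $(-\Delta)^s+\om$ on both sides, yielding
\[
\psi=R+(2\sigma+1)c^2(\om)\psi(0)\cg_s^\om, \qquad \widehat R(\xi)=\frac{1}{((2\pi|\xi|)^{2s}+\om)^2}.
\]
By Parseval, $R(0)=\int \widehat R(\xi)\,d\xi=\|\cg_s^\om\|_{L^2}^2$, and $\cg_s^\om(0)=1/c^2(\om)$ by \eqref{62}. Evaluating the displayed identity at $x=0$ gives $\psi(0)=R(0)+(2\sigma+1)\psi(0)$, whence $\psi(0)=-\|\cg_s^\om\|_{L^2}^2/(2\sigma)$. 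Introducing the moments
\[
A_k:=\int_{\rn}\frac{d\xi}{((2\pi|\xi|)^{2s}+\om)^k}, \qquad k=1,2,3,
\]
I can then read off $c^2(\om)=1/A_1$, $\|\cg_s^\om\|_{L^2}^2=A_2$ and $\int R\,\cg_s^\om\,dx=A_3$, so pairing $\psi$ with $\cg_s^\om$ produces
\[
\dpr{\cl_+^{-1}\cg_s^\om}{\cg_s^\om}=A_3-\frac{(2\sigma+1)A_2^{\,2}}{2\sigma A_1}
=\frac{1}{2\sigma A_1}\bigl[2\sigma A_1A_3-(2\sigma+1)A_2^{\,2}\bigr].
\]

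The final step is an explicit Beta-function calculation. Passing to radial coordinates and the substitution $u=(2\pi|\xi|)^{2s}/\om$, with $\alpha:=n/(2s)\in(0,1)$ (this is where $s>n/2$ enters), one obtains $A_k=K\om^{\alpha-k}\Gamma(k-\alpha)/\Gamma(k)$ for a constant $K>0$ independent of $k$. A direct computation then gives
\[
\frac{A_2^{\,2}}{A_1A_3}=\frac{2(1-\alpha)}{2-\alpha},
\]
so that
\[
\operatorname{sgn}\bigl[2\sigma A_1A_3-(2\sigma+1)A_2^{\,2}\bigr]
=\operatorname{sgn}\bigl[2\sigma(2-\alpha)-2(2\sigma+1)(1-\alpha)\bigr]
=\operatorname{sgn}\bigl[(\sigma+1)\alpha-1\bigr]
=\operatorname{sgn}\Bigl(\sigma-\frac{2s-n}{n}\Bigr).
\]
This yields the first assertion. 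For the second, Corollary \ref{s:10} applies to \eqref{412}: we have $n(\cl)=n(\cl_+)+n(\cl_-)=1$ by Corollary \ref{cor:2}, $\dim\ker(\cl)=1$ spanned by $(\phi_\om,0)^T$ (again by Corollary \ref{cor:2}, using $\ker\cl_+=\{0\}$), and $\cj^{-1}(\phi_\om,0)^T=(0,-\phi_\om)^T\perp\ker\cl$, so the scalar $\cd$ reduces to $\dpr{\cl_+^{-1}\phi_\om}{\phi_\om}$. Spectral stability is therefore equivalent to this pairing being negative, i.e.\ to $\sigma<\tfrac{2s}{n}-1$.

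The only delicate point I anticipate is the Beta-function manipulation in the last step — making sure the ratio $A_2^{\,2}/(A_1A_3)$ cleanly telescopes and that the $\om$-powers cancel — but once the gamma identities $\Gamma(2-\alpha)=(1-\alpha)\Gamma(1-\alpha)$ and $\Gamma(3-\alpha)=(2-\alpha)(1-\alpha)\Gamma(1-\alpha)$ are in hand, the arithmetic collapses to the single inequality $(\sigma+1)\alpha<1$, which is exactly the $L^2$-subcritical threshold $\sigma<2s/n-1$.
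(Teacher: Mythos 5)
Your argument is correct and follows essentially the same route as the paper: you solve $\cl_+\psi=\cg_s^\om$ explicitly via the domain description \eqref{65}--\eqref{80} (using $c^2(\om)\cg_s^\om(0)=1$ to get $\psi(0)=-A_2/(2\si)$), reduce the pairing to $A_3-\tfrac{2\si+1}{2\si}\tfrac{A_2^2}{A_1}$, and close with the index count of Corollary \ref{s:10}. The only difference is that you evaluate the ratio $A_2^2/(A_1A_3)$ through the Beta-function identity $A_k=K\om^{\al-k}\Ga(k-\al)/\Ga(k)$, whereas the paper computes each $A_j$ separately by contour integration in the Appendix; both yield the same threshold $\si=\tfrac{2s}{n}-1$.
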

 \begin{proof}
 	We first need to find $\cl_+^{-1} \cg^s_\om$. That is, we need to solve $\cl_+\psi=\cg^s_\om$. Based on the formula \eqref{70} however, we need to solve  
 	$$
 	\cg^s_\om=\cl_+\psi=((-\De)^s+\om) g
 	$$
 	whence,  we can actually find $g$ pretty easily by taking Fourier transform. Namely, 
 	$$
 	((2\pi|\xi|)^{2s}+\om) \hat{g}(\xi)= \widehat{\cg^s_\om}(\xi)=\f{1}{(2\pi|\xi|)^{2s}+\om}. 
 	$$
 It follows that 
 $$
 \hat{g}(\xi)= \f{1}{((2\pi|\xi|)^{2s}+\om)^2}, 
 $$	
 or equivalently $g=\cg^s_\om*\cg^s_\om$. We can now proceed to find $\psi$ from \eqref{80}. Namely, taking into account that $\cl_+=(-\De)^s+\om-(2\si+1)c^2$, we compute 
 $$
 \psi=g+ (2\si+1) c^2 \f{g(0)}{1-(2\si+1) c^2 \cg^s_\om(0)} \cg^s_\om
 $$
 	Note however that $g(0)=\cg^s_\om*\cg^s_\om(0)=\|\cg^s_\om\|_{L^2}^2$. Also, according to \eqref{62}, $c_s^2 \cg^s_\om(0)=1$, so 
 	$$
 	\psi=\cg^s_\om*\cg^s_\om -\f{2\si+1}{2\si} 
 	\f{\int_{\rn} \f{1}{((2\pi|\xi|)^{2s}+\om)^2} d\xi}{\int_{\rn} \f{1}{(2\pi|\xi|)^{2s}+\om} d\xi} \cg^s_\om.
 	$$
 	So 
 	\begin{eqnarray*}
 	\dpr{\cl_+^{-1} \cg^s_\om}{\cg^s_\om} &=& \dpr{\psi}{\cg^s_\om}=\dpr{\cg^s_\om*\cg^s_\om}{\cg^s_\om}-\f{2\si+1}{2\si} 
 	\f{\int_{\rn} \f{1}{((2\pi|\xi|)^{2s}+\om)^2} d\xi}{\int_{\rn} \f{1}{(2\pi|\xi|)^{2s}+\om} d\xi} \dpr{\cg^s_\om}{\cg^s_\om}=\\
 	&=& \int_{\rn} \f{1}{((2\pi|\xi|)^{2s}+\om)^3} d\xi-\f{2\si+1}{2\si} 
 	\f{\left(\int_{\rn} \f{1}{((2\pi|\xi|)^{2s}+\om)^2} d\xi\right)^2}{\int_{\rn} \f{1}{(2\pi|\xi|)^{2s}+\om} d\xi}. 
 	\end{eqnarray*}
 	So, it remains to compute 
 	$$
 	\int_{\rn} \f{1}{((2\pi|\xi|)^{2s}+\om)^j }d\xi, j=1,2,3.
 	$$
 	which we have done in the Appendix, see Proposition \ref{int}. More specifically, substituting the formulas \eqref{b:12}, \eqref{b:14}, \eqref{b:16} in the expression for 
 	$\dpr{\cl_+^{-1} \cg^s_\om}{\cg^s_\om}$, we obtain 
 	\begin{eqnarray*}
 	\dpr{\cl_+^{-1} \cg^s_\om}{\cg^s_\om} &=& \f{\pi |\sn| \om^{\f{n}{2s}-3} }{ 2 (2\pi)^n \sin(\f{n\pi}{2s})} \left(\left(1-\f{n}{2s}\right) \left(2-\f{n}{2s}\right) -  \f{2\si+1}{\si} \left(1-\f{n}{2s}\right)^2   \right)=\\
 	&=& \f{n \pi |\sn| \om^{\f{n}{2s}-3} }{ 4 s \si (2\pi)^n  \sin(\f{n\pi}{2s})} \left(1-\f{n}{2s}\right) \left(\si-\f{2s-n}{n} \right). 
 	\end{eqnarray*}
 	Note that, as $s>\f{n}{2}$, only the last term in the expression changes sign over the parameter space. We have this established  Proposition \ref{prop:s:26} in full. 
 \end{proof}
 The above spectral properties of the operator $\cl_{\pm}$ we have one last stop before arriving at the orbital stability of the wave, that is we need to argue the coerciveness of $\cl_{\pm}$ on the space $H^s(\rn)$.  To that end we have the following proposition
 \begin{proposition}
    \label{vk}
  		Let $s>\frac n2,\om>0,$ $\dpr{\cl_+^{-1} \phi_\om }{\phi_\om}<0$. Then, the operator $\cl_+$ is coercive on $\{\phi_\om\}^\perp$. That is, there exists $\de>0$, so that for all 
  		\begin{equation}
  		\label{1072}
  		\dpr{\cl_+ \Psi}{\Psi} \geq \de \|\Psi\|_{H^{s}}^2,\ \  \forall \Psi\perp \phi_\om.
  		\end{equation}
\end{proposition}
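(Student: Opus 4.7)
The plan is to prove \eqref{1072} in two stages. First, I will establish the weaker $L^2$-coercivity on $\{\phi_\om\}^\perp$ out of the Vakhitov--Kolokolov hypothesis $\dpr{\cl_+^{-1}\phi_\om}{\phi_\om}<0$ and the spectral picture for $\cl_+$ recorded in Corollary \ref{cor:2}. Then I will upgrade to full $H^s$-coercivity by combining that $L^2$ bound with the G\r{a}rding-type inequality for the quadratic form $\cq_c$ already derived in Section \ref{sec:2.3}.

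For the first stage, Corollary \ref{cor:2} tells us that $\cl_+$ is self-adjoint with exactly one simple negative eigenvalue $-|E_0|$ (with eigenfunction $\Psi_0$) and with $\cl_+|_{\{\Psi_0\}^\perp}\geq \om>0$; in particular $0\notin\si(\cl_+)$, so $\cl_+^{-1}$ is bounded on $L^2$. I would then invoke the scalar constrained Morse-index criterion (this is essentially Corollary \ref{s:10} applied to the restricted quadratic form $\dpr{\cl_+\cdot}{\cdot}\big|_{\{\phi_\om\}^\perp}$): since $n(\cl_+)=1$ and the scalar matrix $\cd=\dpr{\cl_+^{-1}\phi_\om}{\phi_\om}$ satisfies $n(\cd)=1$ by hypothesis, the Morse index of $\cl_+$ on $\{\phi_\om\}^\perp$ equals $n(\cl_+)-n(\cd)=0$. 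Triviality of the kernel of $\cl_+|_{\{\phi_\om\}^\perp}$ then follows from the invertibility of $\cl_+$ on all of $L^2$. Since the essential spectrum is stable under the codimension-one restriction and $\si_{ess}(\cl_+)\subset[\om,\infty)$, any spectrum of $\cl_+|_{\{\phi_\om\}^\perp}$ inside $(0,\om)$ is purely discrete and finite, and hence bounded away from zero. Combining, one obtains a spectral gap $\de_0>0$ with
\begin{equation*}
\dpr{\cl_+\Psi}{\Psi}\geq \de_0\|\Psi\|_{L^2}^2,\qquad \forall \Psi\perp \phi_\om.
\end{equation*}

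For the $H^s$-upgrade, the quadratic form construction in Section \ref{sec:2.3} supplies constants $D, M>0$ so that $\dpr{\cl_+\Psi}{\Psi}=\cq_c(\Psi,\Psi)\geq D\|\Psi\|_{\dot H^s}^2-M\|\Psi\|_{L^2}^2$ for every $\Psi\in H^s$. Taking a convex combination of weight $\theta\in(0,1)$ of this G\r{a}rding estimate with weight $(1-\theta)$ of the $L^2$-coercivity above, and choosing $\theta$ small enough that $(1-\theta)\de_0>\theta M$, yields
\begin{equation*}
\dpr{\cl_+\Psi}{\Psi}\geq \theta D\|\Psi\|_{\dot H^s}^2+\bigl((1-\theta)\de_0-\theta M\bigr)\|\Psi\|_{L^2}^2\geq \de\|\Psi\|_{H^s}^2,
\end{equation*}
for some $\de>0$, as claimed. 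The main technical obstacle is the first stage, specifically upgrading the non-negativity of $\cl_+$ on the codimension-one subspace into a strict positive lower bound; this is precisely where the trivial kernel of $\cl_+$ together with the finiteness of discrete spectrum below $\om$ (inherited from stability of $\si_{ess}$ under finite-rank restriction) is essential. Once the $L^2$ bound is secured, the bootstrap to $H^s$ is a routine convex combination.
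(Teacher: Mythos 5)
Your proposal is correct in outline but takes a genuinely different route from the paper's at both stages. For the $L^2$ spectral gap, the paper does \emph{not} argue spectrally on the restricted operator: it only imports non-negativity of $\cl_+$ on $\{\phi_\om\}^\perp$ from the Weinstein-type constrained index lemma (Lemma 6.4 in \cite{Pava}), and then proves \emph{strict} positivity by a variational compactness argument --- a minimizing sequence for $\inf\{\dpr{\cl_+ f}{f}: \|f\|_{L^2}=1,\ f\perp\phi_\om\}$ is shown, via Sobolev embedding and local uniform convergence, to converge to a minimizer whose Euler--Lagrange equation forces $f=c\,\cl_+^{-1}\phi_\om$, which is incompatible with $\dpr{\cl_+^{-1}\phi_\om}{\phi_\om}\neq 0$ unless $f=0$. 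Your interlacing/essential-spectrum argument reaches the same gap more directly and avoids the compactness machinery; the paper's route avoids having to realize the restricted operator at all. For the $H^s$ upgrade, your convex combination of the G\aa rding bound from Section \ref{sec:2.3} with the $L^2$ gap is cleaner than the paper's second contradiction argument (which normalizes in $H^s$ and uses \eqref{017} plus Gagliardo--Nirenberg); both work.

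Three points in your first stage need tightening. First, triviality of the kernel of the restriction does \emph{not} follow from invertibility of $\cl_+$ alone: if $f\perp\phi_\om$ and $\dpr{\cl_+ f}{g}=0$ for all $g\perp\phi_\om$, then $\cl_+ f=c\,\phi_\om$, i.e. $f=c\,\cl_+^{-1}\phi_\om$, and one needs $\dpr{\cl_+^{-1}\phi_\om}{\phi_\om}\neq 0$ (your hypothesis) to conclude $c=0$; were that pairing zero, $\cl_+^{-1}\phi_\om$ would be a genuine kernel element of the restriction even though $\cl_+$ is invertible. Second, the identity $n(\cl_+|_{\{\phi_\om\}^\perp})=n(\cl_+)-n(\cd)$ is not Corollary \ref{s:10}, which counts unstable eigenvalues of the Hamiltonian problem $\cj\cl$; the correct tool is the constrained-Morse-index lemma (Lemma 6.4 of \cite{Pava}), which is what the paper itself invokes. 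Third, $\phi_\om$ is a multiple of $\cg_s^\om$, which does \emph{not} belong to $H^{2s}(\rn)$, and a short computation with \eqref{65} shows $\phi_\om\notin D(\cl_+)$; so ``$\cl_+|_{\{\phi_\om\}^\perp}$'' must be understood as the self-adjoint operator generated by the form $\cq_c$ restricted to $H^s\cap\{\phi_\om\}^\perp$, after which the min--max interlacing and the lower bound $\om$ on the essential spectrum go through at the form level (and in fact give at most one min--max value below $\om$). None of these is a fatal gap, but as written the first stage leans on assertions that require exactly these justifications.
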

%{\bf Remark:} Note that the related property $\dpr{\cl_+ \Psi}{\Psi} \geq \de \|\Psi\|_{L^2}^2$ is just 
 \begin{proof}
 This is a version of a well-known lemma in the theory, see for example Lemma 6.7 and Lemma 6.9 in \cite{Pava}.  Recall that we have already showed $Ker[\cl_+]=\{0\}$ and $n(\cl_+)=1$. According\footnote{and this is already explicit in a much  earlier work by Weinstein} to  Lemma 6.4, \cite{Pava} under these  conditions for $\cl_+$ 
 we have that for any $g\perp \phi_\om$, 
 \begin{equation}
 \label{1108}
 \dpr{\cl_+ g}{g}\geq 0.
 \end{equation}
  Consider the associated constrained minimization problem 
  \begin{equation}
  \label{1105} 
  \inf\limits_{\|f\|=1, f\perp \phi_\om} \dpr{\cl_+ f}{f} 
  \end{equation} 
  and set 
 $$
 \al:=\inf\{\dpr{\cl_+ f}{f}: f\perp \phi_\om, \|f\|_{L^2}=1\}\geq 0.
 $$
We will show that $\al>0$. Assume for a contradiction that $\al=0$. 

  Take a minimizing sequence $f_k: \|f_k\|=1, f_k\perp \phi_\om$, 
 so that 
 $$
 \al=\lim_k \dpr{\cl_+ f_k}{f_k}=\lim_k [\|(-\De)^{\f{s}{2}} f_k\|^2 +\om- (2\sigma+1)c^2|f_k(0)|^2].
 $$
 However, by Sobolev embedding and the Gagliardo-Nirenberg's inequalities  (recall $\|f_k\|_{L^2}=1$), we have that for all $\be: \f n2<\beta<s$ and for all $\eps>0$, 
 $$
 |f(0)|\leq \|f\|_{L^\infty} \leq C_\be (\|f\|_{\dot{H}^\be}+C \|f\|_{L^2} \leq C_\be \|f\|_{\dot{H}^s}^{\f{\be}{s}} \|f\|_{L^2}^{1-\f \be s}+C \|f\|_{L^2}\leq \eps \|f\|_{\dot{H}^s}  + C_\eps  \|f\|_{L^2}. 
 $$
 Applying this estimate, we obtain a lower bound for $\dpr{\cl_+ f_k}{f_k}$ (recall $\|f_k\|_{L^2}=1$), as follows 
 $$
 \dpr{\cl_+ f_k}{f_k}\geq \f{1}{2} \|(-\De)^{\f{s}{2}} f_k\|^2 - C.
 $$
 Since, $\al=\lim_k \dpr{\cl_+ f_k}{f_k}$, this implies that $\sup_k \|(-\De)^{\f{s}{2}} f_k\|^2<\infty$. This means that we can select a subsequence of $\{f_k\}$ (denoted by the same), so that $f_k$ converges weakly to $f\in H^s(\rn)$. In addition, by the Sobolev embedding $H^s(\rn)\hookrightarrow C^\ga(\rn), \ga<s-\f{n}{2}$, we can, as we have done previously, without loss of generality assume that $f_n\rightrightarrows f$ on the compact subsets of $\rn$. In particular, $\lim_k f_k(0)=f(0)$. Note that by the weak convergence, $\dpr{f}{\phi_\om}=\lim_k \dpr{f_k}{\phi_\om}=0$, so $f\perp \phi_\om$ and 
  \begin{equation}
  \label{1118}
 \liminf_k \|(-\De)^{\f{s}{2}} f_k\|^2\geq \|(-\De)^{\f{s}{2}} f\|^2, \ \  \|f\|_{L^2}\leq \liminf \|f_k\|_{L^2}=1.
 \end{equation}
 It follows that 
 \begin{equation}
 \label{1112}
  \dpr{\cl_+ f}{f}\leq \liminf_k \dpr{\cl_+ f_k}{f_k} =0.
 \end{equation}
 But by \eqref{1108}, and since $f\perp \phi_\om$, we have that $\dpr{\cl_+ f}{f}\geq 0$. It follows that $0=\dpr{\cl_+ f}{f}=\lim_k \dpr{\cl_+ f_k}{f_k}$. But this means that all inequalities in \eqref{1118} and \eqref{1112} are equalities and in particular
 $$
 \lim_k \|(-\De)^{\f{s}{2}} f_k\|_{L^2}=\|(-\De)^{\f{s}{2}}  f\|_{L^2}, \lim_k \|f_k\|_{L^2}=\|f\|_{L^2}.
 $$
This last identities, in addition to the $H^s$ weak convergence $f_k$ to $f$,  implies strong convergence, that is  $\lim_k \|f_k-f\|_{H^s}=0$. In particular, $\|f\|_{L^2}=\lim_k \|f_k\|_{L^2}=1$. In other words, $f$ is a minimizer for the constrained minimization problem \eqref{1105}. 
   Write the Euler-Lagrange equation for $f$ 
 \begin{eqnarray}
 \label{1110} 
 \cl_+ f = d  f + c \phi_\om
   .\end{eqnarray}
 Taking dot product with $f$ and taking into account $\dpr{\cl_+ f}{f}=0$, $f\neq 0$ and $f\perp \phi_\om$ implies that $d=0$. This means that $f=c \cl_+^{-1} \phi_\om$. But then, 
 $$
 0=\dpr{\cl_+ f}{f}=c^2 \dpr{\cl_+^{-1} \phi_\om}{\phi_\om}.
 $$
 Since $\dpr{\cl_+^{-1} \phi_\om}{\phi_\om}\neq 0$, it follows $c=0$. But then, since $Ker[\cl_+]=\{0\}$, \eqref{1110} implies that  $f=0$, which is a contradiction. 
 Thus, we have shown that $\al>0$. As a consequence, 
 \begin{eqnarray}
 \label{1120} 
 \dpr{\cl_+ \Psi}{\Psi}\geq \al \|\Psi\|_{L^2}^2, \ \ \forall \Psi\perp \phi_\om.
 \end{eqnarray}
Note that \eqref{1072} is however stronger than \eqref{1120}, as it involves 
$\|\cdot\|_{H^s}$ norms on the right-hand side. Nevertheless, we show that it is relatively straightforward to deduce it from \eqref{1120}. 
Indeed, assume for a contradiction in \eqref{1072}, that $g_k: \|g_k\|_{H^s}=1, g_k\perp \phi_\om$, so that  $\lim_k \dpr{\cl_+ g_k}{g_k}=0$. 

Taking into account \eqref{1120}, 
this is only possible if $\lim_k \|g_k\|_{L^2}=0$. So, 
$$1
=\lim_k[\|(-\De)^{\f{s}{2}} g_k\|_{L^2}^2+ \|g_k\|_{L^2}^2]=\lim_k \|(-\De)^{\f{s}{2}} g_k\|_{L^2}^2.
$$
 Note that by \eqref{017}, we have that for all $0<\de<s-\f n2$, we have that 
 $$
 |g_k(0)|\leq \|g_k\|_{L^\infty} \leq C(\|g_k\|_{\dot{H}^{\f n2 + \de}}+ \|g_k\|_{\dot{H}^{\f n2 - \de}})\leq 
 C(\|g_k\|_{\dot{H}^s}^{\f{\f{n}{2}+\de}{s}} \|g_k\|_{L^2}^{1-\f{\f{n}{2}+\de}{s}}+ \|g_k\|_{\dot{H}^s}^{\f{\f{n}{2}-\de}{s}} \|g_k\|_{L^2}^{1-\f{\f{n}{2}-\de}{s}},
 $$
 whence $\lim_k \|g_k(0)|=0$. 
 But then, we achieve a contradiction
 $$
 0=\lim_k \dpr{\cl_+ g_k}{g_k} = \lim_k [\|(-\De)^{\f{s}{2}} g_k\|_{L^2}^2+\om \|g_k\|^2 - (2\sigma+1)c_s^2|g_k(0)|^2]=1,
 $$
 \end{proof}

 \subsection{Orbital stability }
  In this section, we prove that the spectrally stable solutions are in fact orbitally stable. That is, we consider the case $0<\si<\f{2s}{n}-1$. 
  \begin{proposition}
 	\label{prop:11} 
      Let $\om>0$, $n\geq 1, s>\f n2$, $0<\si<\f{2s}{n}-1$ and the key assumptions (1), (2) are satisfied. 
 Then $e^{ i \om t} \phi_\om$ is orbitally stable solution of \eqref{20}. 
 \end{proposition}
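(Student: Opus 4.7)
The plan is to run the classical Benjamin contradiction argument, adapted to the concentrated nonlinearity, using as inputs the conservation laws \eqref{conserve}, the modulation symmetry $u \mapsto e^{i\theta}u$, and the $H^s$-coercivity of $\cl_+$ on $\{\phi_\om\}^\perp$ given by Proposition \ref{vk}, together with the analogous coercivity statement for $\cl_-$ on $\{\phi_\om\}^\perp$ (which follows from Corollary \ref{cor:2} by the same upgrade-to-$H^s$ bootstrap as in the proof of Proposition \ref{vk}, using \eqref{017} to control $|w(0)|$ by $\|w\|_{\dot H^{n/2\pm \de}}$ and interpolating). First I would suppose, for contradiction, the existence of $\eps_0>0$, initial data $g_k\to \phi_\om$ in $H^s$, and times $t_k>0$ with
$$
\inf_{\theta\in\rone}\|u_{g_k}(t_k,\cdot)-e^{-i(\om t_k+\theta)}\phi_\om\|_{H^s}=\eps_0;
$$
the existence of such $t_k$ follows from the continuity-in-$t$ property inside the key assumption (1) and $g_k\to\phi_\om$. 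Choosing $\theta_k$ attaining the infimum and writing $u_{g_k}(t_k,\cdot)=e^{-i(\om t_k+\theta_k)}(\phi_\om+v_k)$, we have $\|v_k\|_{H^s}=\eps_0$, and the minimization condition in $\theta$ yields the orthogonality $\langle \Im v_k,\phi_\om\rangle_{L^2}=0$.

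Next I would expand the conserved Lyapunov functional $F:=E-\om M$ around $\phi_\om$. Writing $v_k=v_{1,k}+\im v_{2,k}$ with $v_{j,k}$ real, using the profile equation \eqref{30} in the form $((-\De)^s+\om)\phi_\om=|\phi_\om(0)|^{2\si}\phi_\om(0)\de_0$ to cancel the linear term, and the elementary expansion
$$
\tfrac{1}{2\si+2}|\phi_\om(0)+v_k(0)|^{2\si+2}=\tfrac{1}{2\si+2}\phi_\om(0)^{2\si+2}+\phi_\om(0)^{2\si+1}v_{1,k}(0)+\tfrac{2\si+1}{2}\phi_\om(0)^{2\si}v_{1,k}(0)^2+\tfrac{1}{2}\phi_\om(0)^{2\si}v_{2,k}(0)^2+O(|v_k(0)|^3),
$$
I arrive at
$$
F(u_{g_k}(t_k))-F(\phi_\om)=\tfrac{1}{2}\dpr{\cl_+ v_{1,k}}{v_{1,k}}+\tfrac{1}{2}\dpr{\cl_- v_{2,k}}{v_{2,k}}+R_k,\qquad |R_k|\leq C\|v_k\|_{H^s}^3,
$$
where the cubic remainder is controlled via $|v_k(0)|\leq C\|v_k\|_{H^s}$ from the Sobolev embedding $H^s\hookrightarrow L^\infty$. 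By the key assumption (2) and $g_k\to \phi_\om$ in $H^s$, the left-hand side tends to zero.

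The mass conservation $M(u_{g_k}(t_k))=M(g_k)\to M(\phi_\om)$ gives $2\dpr{v_{1,k}}{\phi_\om}+\|v_k\|_{L^2}^2\to 0$, so if we decompose $v_{1,k}=\al_k\phi_\om+v_{1,k}^\perp$ with $v_{1,k}^\perp\perp\phi_\om$ in $L^2$, then $|\al_k|\lesssim \|v_k\|_{H^s}^2=\eps_0^2$. Since $v_{2,k}\perp\phi_\om$ by the modulation condition, the $H^s$-coercivity of $\cl_-$ on $\{\phi_\om\}^\perp$ yields $\dpr{\cl_- v_{2,k}}{v_{2,k}}\geq \de\|v_{2,k}\|_{H^s}^2$, and the $H^s$-coercivity of $\cl_+$ on $\{\phi_\om\}^\perp$ (Proposition \ref{vk}) gives $\dpr{\cl_+ v_{1,k}^\perp}{v_{1,k}^\perp}\geq \de\|v_{1,k}^\perp\|_{H^s}^2$; the cross terms and the $\al_k^2\dpr{\cl_+\phi_\om}{\phi_\om}$ term are $O(\eps_0^3)$. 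Combining with $\|v_{1,k}^\perp\|_{H^s}^2\geq \tfrac{1}{2}\|v_{1,k}\|_{H^s}^2-C\al_k^2$ for $\eps_0$ sufficiently small, we obtain
$$
\tfrac{1}{2}\dpr{\cl_+ v_{1,k}}{v_{1,k}}+\tfrac{1}{2}\dpr{\cl_- v_{2,k}}{v_{2,k}}\geq c\,\|v_k\|_{H^s}^2-C\eps_0^3=c\eps_0^2-C\eps_0^3.
$$
Combined with $|R_k|\leq C\eps_0^3$ and $F(u_{g_k}(t_k))-F(\phi_\om)\to 0$, this yields $c\eps_0^2\leq C\eps_0^3+o(1)$, which is a contradiction once $\eps_0$ is chosen sufficiently small.

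The main technical obstacle, as in Proposition \ref{vk}, is passing from $L^2$-coercivity of $\cl_-$ on $\{\phi_\om\}^\perp$ (immediate from Corollary \ref{cor:2}) to genuine $H^s$-coercivity; this requires the minimizing-sequence / Gagliardo-Nirenberg bootstrap using \eqref{017} to control the $\de_0$-term, and is the only place where the singular nature of the nonlinearity creates friction. Once this coercivity is in hand, the rest of the argument is the standard orbital-stability scheme and only uses the abstract features (modulation, conservation, quadratic expansion) of the problem.
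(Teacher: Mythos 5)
Your proposal follows the same route as the paper: Benjamin's contradiction argument, a phase modulation enforcing an orthogonality on the imaginary part, control of the $\phi_\om$-component of the real part via mass conservation, a second-order expansion of a conserved Lyapunov functional producing the quadratic forms of $\cl_\pm$, and the coercivity of $\cl_\pm$ on $\{\phi_\om\}^\perp$ to close the estimate. You also correctly flag the one genuinely model-specific point, namely that the $L^2$ bound $\cl_-|_{\{\phi_\om\}^\perp}\geq \om$ from Corollary \ref{cor:2} must be upgraded to $H^s$-coercivity by the same Gagliardo--Nirenberg bootstrap on the $\de_0$-term as in Proposition \ref{vk}; the paper is in fact rather terse about this.

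Two points in your write-up need repair. First, the modulation step: if you define $\theta_k$ as the minimizer of $\theta\mapsto\|u(t_k)-e^{-i(\om t_k+\theta)}\phi_\om\|_{H^s}$, the first-order condition gives orthogonality of $\Im v_k$ to $\phi_\om$ in the \emph{$H^s$ inner product}, i.e. $\dpr{\Im v_k}{((-\De)^s+1)\phi_\om}_{L^2}=0$, not the $L^2$ condition $\dpr{\Im v_k}{\phi_\om}_{L^2}=0$ that you subsequently invoke to apply the coercivity of $\cl_-$ (whose kernel is $span\{\phi_\om\}$ in $L^2$). As written, the inequality $\dpr{\cl_- v_{2,k}}{v_{2,k}}\geq \de\|v_{2,k}\|_{H^s}^2$ is not justified. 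The fix is what the paper does: define $\theta_k(t)$ directly by the implicit equation $-\sin(\theta_k(t))\|\phi_\om\|_{L^2}^2=\dpr{\Im u_k(t)}{\phi_\om}$, which is uniquely solvable for small perturbations and enforces exactly the $L^2$ orthogonality needed (at the cost of replacing "$=\eps_0$" by "$\asymp\eps_0$" for the distance at time $t_k$, which is harmless). Second, your Lyapunov functional is written as $F=E-\om M$; with the sign conventions of \eqref{conserve} and \eqref{30} the correct choice is $\mathcal{E}=E+\f{\om}{2}M$, since only then does the first variation at $\phi_\om$ vanish and the linear term cancel. The expansion you actually display (with $\f12\dpr{\cl_+ v_{1,k}}{v_{1,k}}+\f12\dpr{\cl_- v_{2,k}}{v_{2,k}}$) is the one coming from $E+\f{\om}{2}M$, so this is a slip in the definition rather than in the computation, but it should be corrected.
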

 \begin{proof}
 	Let us outline first what  the consequences of our assumptions are. By Proposition \ref{prop:s:26}, we have that $\dpr{\cl_+^{-1} \phi_\om}{\phi_\om}<0$, which by Proposition \ref{vk} means that the coercivity estimate \eqref{1072} holds. By Corollary \ref{cor:2}, $Ker(\cl_+)=\{0\}$, that is the wave $\phi_\om$ is non-degenerate. 
 	
 	We now concentrate on the orbital sdtability. Our proof is  by a contradiction argument. That is, there  is  $\epsilon_0>0$ and a sequence of initial data $u_k:\lim_k \|u_k-\phi\|_{H^s(\mathbb{R}^n)}=0$, 
  so that 
  \begin{equation}
  \label{contradict}
      \sup_{0\leq t<\infty}\inf_{\theta\in \rone }\|u_k(t,\cdot)-e^{-i\theta}\phi\|_{H^s}\geq \epsilon_0. 
  \end{equation}
 
Using the conserved quantities \eqref{conserve} we define new conserved quantity $$
\mathcal{E}[u]:=E[u]+\f{\om}{2}M[u],
$$ $$\eps_k:=|\mathcal{E}[u_k(t)]-\mathcal{E}[\phi_\om]]|+|M[u_k(t)]-M[\phi_\om]]|,$$ and for all  $\eps>0$,  $$
 t_k:=\sup\{\tau:\sup_{0<t<\tau} \|u_k(t)-\phi\|_{H^s(\mathbb{R}^n)}<\eps\}
. $$ 
Note that $\eps_k$ is conserved and  $\lim_k\eps_k=0$ and by the assumption that we have local \\ well-posedness  $t_k>0$. 

Consider  $t\in(0,t_k)$ and let $u_k=v_k+iw_k$ and $\|w_k(t)\|_{H^s(\mathbb{R}^n)}\leq2\|u_k-\phi\|_{H^s(\mathbb{R}^n)}<\eps.$ This leads to the definition of the modulation parameter $\theta_k(t)$ such that $w_k+\sin{\theta_k(t)}\phi\perp \phi$ that is 

\begin{equation}
   \label{1640}
     -\sin(\theta_k(t))\|\phi\|=\langle w_k(t),\phi\rangle.
\end{equation}
By Cauchy-Schwartz we have $|\langle w_k(t),\phi\rangle|\leq\eps\|\phi\|_{L^2}$ and this means there is an  unique small solution $\theta_k(t)$ of \ref{1640}, with $|\theta_k(t)|\leq \eps.$ Also  $$
  \|u_k(t,\cdot)-e^{-i\theta_k(t)}\phi\|_{H^s}\leq 
  \|u_k(t, \cdot)- \phi\|_{H^s}+ |e^{-i \theta_k(t)}-1|\|\phi\|_{H^s}\leq  C(\|\phi\|_{H^s})\eps,
  $$ 
Now define 
$$
  T_k:=\sup\{\tau:\sup_{0<t<\tau} \|u_k(t, \cdot)-
  	e^{-i\theta_k(t)}\varphi(\cdot)\|_{H^s(\mathbb{R}^n)}<2C\eps\}
.  $$

Clearly $0<t_k<T_k.$ From this we see that to get contradiction of \eqref{contradict} it is enough to show that for all $\eps>0$ and large $k,T_k=\infty.$ To that end let $t\in(0,T_k)$ write $$
\psi_k=u_k-e^{-i\theta_k(t)}\phi=v_k+iw_k-e^{-i\theta_k(t)}\phi
$$ and decompose into real and imaginary part of $\psi_k$ and projecting on  $\left(\begin{array}{cc}
  \phi  \\ 0 
  \end{array}\right)$ yield 
  
   \begin{equation}
     \label{1021}
    \left(\begin{array}{cc}
v_k(t,\cdot)-\cos(\theta_k(t))\phi  \\ w_k(t,\cdot)+\sin(\theta_k(t))\phi 
\end{array}\right)=\mu_k(t)\left(\begin{array}{cc}
\phi  \\ 0 
\end{array}\right)+\left(\begin{array}{cc}
\eta_k(t,\cdot)  \\ \zeta_k(t,\cdot)
\end{array}\right),\ \ \left(\begin{array}{cc}
\eta_k(t,\cdot)  \\ \zeta_k(t,\cdot)
\end{array}\right)\perp \left(\begin{array}{cc}
\phi  \\ 0 
\end{array}\right)
. \end{equation}
  By the choice of $\theta_k$ we have $\zeta_k\perp\phi,$ and from the above decomposition we also have $\eta_k\perp\phi.$ So taking the $L^2$ norm of \eqref{1021} we have 
  
  \begin{equation}
\label{1028} 
|\mu_k(t)|^2 \|\phi\|^2_{L^2} + \|\eta_k(t)\|^2_{L^2}+ \|\zeta_k(t)\|^2_{L^2}=\|\psi_k(t)\|^2_{L^2}\leq 4C^2 \eps^2.
\end{equation}
  Next we take advantage of the two conserved quantities, to that end we consider the mass 
  
  \begin{eqnarray*}
	M[u_k(t)] &=& \int_{\rn} |e^{-i \theta_k(t)} \phi+\psi_k(t)|^2 dx=M[\phi]+\|\psi_k(t, \cdot)\|_{L^2}^2+ 2\int_{\rn} \phi(x) \Re  [e^{-i \theta_k(t)} \psi_k(t, x)] dx.\\
	&=&M[\phi]+\|\psi_k(t, \cdot)\|_{L^2}^2+ 2  \mu_k(t) \cos(\theta_k(t)) \|\phi\|^2
\end{eqnarray*}
  
  Here used the fact that $w_k+\sin{\theta_k(t)}\phi\perp \phi$ and $\eta_k\perp\phi.$ Solving for $\mu_k(t)$ and since $|\theta_k|$ is very small and $\|\psi_k(t, \cdot)\|_{L^2}\leq 2C \eps$, in $t: 0<t<T_k$  we have 
  
  \begin{equation}
\label{1048} 
|\mu_k(t)|\leq \f{|M[u_k(t)]-M[\phi]|+ \|\psi_k(t, \cdot)\|_{L^2}^2}{2\cos(\theta_k(t) \|\phi\|^2}
\leq C(\eps_k+\|\psi_k(t, \cdot)\|_{L^2}^2)\leq C(\eps_k+\eps^2). 
\end{equation}
  
  Now we will expand $\mathcal{E}[u_k(t)]-\mathcal{E}[\phi]$ but first for any small perturbations of the wave $\alpha_1+i \alpha_2\in H^s(\rn)$ and using \eqref{30} we have 
  
  \begin{equation}
\label{1104}
E[\phi+(\alpha_1+i \alpha_2)]-E[\phi] = \f{1}{2}[\dpr{\cl_+ \alpha_1}{\alpha_1}+ 
\dpr{\cl_- \alpha_2}{\alpha_2}]+Err[\alpha_1,\alpha_2], 
\end{equation}
where 
\begin{align*}
    &|Err[\alpha_1,\alpha_2]|\leq C|((\phi(0)+\alpha_1(0))^2+ \alpha_2^2(0))^{\sigma+1}-\phi(0)^{2\sigma+2}\\
    &-({2\sigma+2})\phi(0)^{2\sigma+1}\alpha_1(0)-\f{(2\sigma+2)(2\sigma+1)}{2}\phi^{2\sigma}(0)\alpha_1^2(0)-(2\sigma+2) \phi^{2\sigma}(0)\alpha_2^2(0)  |\\
    &\leq C(\|\phi\|_{L^\infty})(|\alpha_1(0)|+|\alpha_2(0)|)^{\min(2\sigma+2,3)}.
\end{align*}
Apply this expansion \eqref{1104} to 
$$
\alpha_1+i \alpha_2=e^{i \theta_k(t)} \psi_k=\left[\cos(\theta_k)(\mu_k\phi+\eta_k)-\sin(\theta_k) \zeta_k\right]+
i \left[\cos(\theta_k)\zeta_k+\sin(\theta_k)(\mu_k\phi+\eta_k)\right].
$$
From \eqref{1028}, we see that $\|\alpha_1\|_{H^s}+\|\alpha_2\|_{H^s}\leq C\eps$, so we can bound the contribution of $|Err[\alpha_1, \alpha_2]|$ as follows 
\begin{equation}
\label{1205} 
|Err[\alpha_1, \alpha_2]|\leq C\eps^{\min (2\sigma,1)}  (\|\alpha_1\|_{H^s}^2+ \|\alpha_2\|_{H^s}^2).
\end{equation}

By the Sobolev embeddings, $\cl_-\phi=0$ and $\cl_+=\cl_--2\sigma|\phi(0)|^{2\sigma}\delta$ together with \eqref{1028} and \eqref{1048} we have

\begin{eqnarray*}
& & \dpr{\cl_+ \alpha_1}{\alpha_1}\geq\dpr{\cl_+ \eta_k}{\eta_k}-C(\eps^3+\eps_k+\eps^2(\|\eta_k\|_{H^s}+\|\zeta_k\|_{H^s})+
\eps(\|\eta_k\|_{H^s}+\|\zeta_k\|_{H^s})^2)
\\
& & \dpr{\cl_- \alpha_2}{\alpha_2}\geq \dpr{\cl_- \zeta_k}{\zeta_k} -C(\eps^3+\eps_k+\eps^2(\|\eta_k\|_{H^s}+\|\zeta_k\|_{H^s})+
\eps (\|\eta_k\|_{H^s}+\|\zeta_k\|_{H^s})^2) 
.\end{eqnarray*}

Taking advantage of  the coercivity of $\cl_-$  and $\cl_+$, which was established in Proposition \ref{prop:23}, we have that for some $\ka>0$ and since $\eta_k, \zeta_k \perp \phi$ together with some algebraic manipulations yield 

\begin{equation}
 \label{1544} 
 \|\eta_k(t)\|_{H^s}^2+\|\zeta_k(t)\|_{H^s}^2\leq C (\eps^3+\eps_k),
 \end{equation}
Here $C$ is independent of $\eps$ and $k$. This implies that  $T_k^*=\infty$, since if we assume that  $T_k^*<\infty$, then 
 \begin{equation}
 \label{eps}
 2 C_0\eps=\limsup_{t\to T_k^*-} \|\psi_k(t)\|_{H^s}\leq C(|\mu_k(t)|+  \|\eta_k(t)\|_{H^s}+\|\zeta_k(t)\|_{H^s})\leq C(\eps^{\f{3}{2}} +\sqrt{\eps_k}).
 \end{equation}
 which is a contradiction, if $\eps$ is so that $C_0\eps>C \eps^{\f{3}{2}}$ and then $k$ is so large, and hence $\eps_k$ is so small,  that  $C_0\eps>C\sqrt{\eps_k}$, which certainly  contradicts \eqref{eps}.  Hence the wave is orbitally stable. 

  \end{proof}

 \appendix
 
 \section{The integrals $	\int_{\rn} \f{1}{((2\pi|\xi|)^{2s}+\om)^j }d\xi$}
 Herein, we compute the integrals that arise in the calculation of the Vakhitov-Kolokolov index in Proposition \ref{prop:s:26}. 
 \begin{proposition}
 	\label{int} 
 	For $\om>0$, we have 
 	\begin{eqnarray}
 	\label{b:12} 
 	\int_{\rn} \f{1}{(2\pi|\xi|)^{2s}+\om}d\xi &=& \f{\pi |\sn|}{(2\pi)^n}  \f{\om^{\f{n}{2s}-1}}{\sin(\f{ n\pi}{2s})} \\
 	\label{b:14}
 	 \int_{\rn} \f{1}{((2\pi|\xi|)^{2s}+\om)^2}d\xi &=& \f{\pi |\sn|}{(2\pi)^n} \left(1-\f{n}{2s}\right)  \f{\om^{\f{n}{2s}-2}}{\sin(\f{ n\pi}{2s})} \\
 	 \label{b:16}
 	 \int_{\rn} \f{1}{((2\pi|\xi|)^{2s}+\om)^3}d\xi &=& \f{\pi |\sn| }{2 (2\pi)^n} \left(1-\f{n}{2s}\right) \left(2-\f{n}{2s}\right) \f{\om^{\f{n}{2s}-3}}{\sin(\f{ n\pi}{2s})} 
 	\end{eqnarray}
 \end{proposition}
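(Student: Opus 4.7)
Introduce the one-parameter family
\[
F_k(\omega) := \int_{\mathbb{R}^n} \frac{d\xi}{\bigl((2\pi|\xi|)^{2s}+\omega\bigr)^k}, \qquad k=1,2,3.
\]
Differentiating under the integral sign (justified by dominated convergence in the region $\omega > 0$) yields the recursion $F_{k+1}(\omega) = -\tfrac{1}{k}\,\partial_\omega F_k(\omega)$. Thus it suffices to compute $F_1$ explicitly; the formulas for $F_2$ and $F_3$ then drop out by differentiating twice in $\omega$. Since $F_1$ will turn out to be a constant multiple of $\omega^{\frac{n}{2s}-1}$, the power rule will produce exactly the numerical prefactors $\bigl(1-\tfrac{n}{2s}\bigr)$ and $\tfrac{1}{2}\bigl(1-\tfrac{n}{2s}\bigr)\bigl(2-\tfrac{n}{2s}\bigr)$ visible in \eqref{b:14} and \eqref{b:16}.

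For $F_1$ I would proceed in three elementary steps. First, pass to spherical coordinates using radiality to rewrite $F_1(\omega) = |\sn|\int_0^\infty r^{n-1}/((2\pi r)^{2s}+\omega)\,dr$. Second, substitute $v = 2\pi r$ to extract a clean factor $(2\pi)^{-n}$. Third, perform the scaling $t = v^{2s}/\omega$, under which $((2\pi r)^{2s}+\omega)^{-1} = \omega^{-1}(t+1)^{-1}$ and the Jacobian introduces both the overall $\omega^{\frac{n}{2s}-1}$ and a residual factor $\tfrac{1}{2s}$. The remaining radial integral is the classical Mellin evaluation
\[
\int_0^\infty \frac{t^{a-1}}{t+1}\,dt = \frac{\pi}{\sin(\pi a)}, \qquad 0<a<1,
\]
applied at $a = n/(2s)$. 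The hypothesis $a \in (0,1)$ is equivalent to the standing assumption $s > n/2$ (and is also precisely what ensures that $F_1$ converges at infinity), so the identity applies.

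The only potential obstacle is purely bookkeeping—tracking the powers of $2\pi$, $\omega$, and the factor $1/(2s)$ through the chain of substitutions, and then making sure the two subsequent differentiations in $\omega$ assemble the claimed coefficients. There is no conceptual difficulty; once $F_1$ is identified as a constant multiple of $\omega^{\frac{n}{2s}-1}$, the identities \eqref{b:14} and \eqref{b:16} follow by a one-line computation using $F_2 = -F_1'$ and $F_3 = \tfrac{1}{2}F_1''$.
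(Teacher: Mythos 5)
Your route is genuinely different from the paper's. The paper reduces each integral to $\int_{-\infty}^{\infty} e^{at}(e^t+1)^{-j}\,dt$ with $a=\f{n}{2s}$ and evaluates all three cases $j=1,2,3$ by separate residue computations on a rectangular contour of height $2\pi$; you evaluate only the case $j=1$, via the classical identity $\int_0^\infty t^{a-1}(1+t)^{-1}dt=\pi/\sin(\pi a)$, and then generate $j=2,3$ by differentiating in $\om$. Your recursion $F_{k+1}=-\f{1}{k}\partial_\om F_k$ is correct, differentiation under the integral sign is easily justified for $\om>0$, and since $F_1=C\om^{a-1}$, the power rule produces exactly the relative coefficients $(1-a)$ and $\f12(1-a)(2-a)$. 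This is more economical than the paper's argument: one Beta-type integral instead of three residue calculations, with the higher cases obtained for free.

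There is, however, a discrepancy you should not gloss over. Carrying out your own substitutions carefully --- and you yourself record the ``residual factor $\f{1}{2s}$'' coming from the Jacobian of $t=v^{2s}/\om$ --- gives
\[
F_1(\om)=\f{\pi\,|\sn|}{2s\,(2\pi)^n}\,\f{\om^{\f{n}{2s}-1}}{\sin\left(\f{n\pi}{2s}\right)},
\]
which is $\f{1}{2s}$ times the right-hand side of \eqref{b:12}. A sanity check at $n=s=\om=1$ confirms that your constant is the correct one: the left side of \eqref{b:12} is $\int_{\mathbb R}(4\pi^2\xi^2+1)^{-1}d\xi=\tfrac12$, whereas the stated right-hand side equals $1$. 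So the proposition as written (and the paper's proof, which silently drops the factor $\f{1}{2s}$ when passing from $\int_0^\infty \rho^{n-1}((2\pi\rho)^{2s}+\om)^{-j}d\rho$ to $\int_0^\infty \rho^{\f{n}{2s}-1}(\rho+1)^{-j}d\rho$) is off by the constant factor $2s$ in all three formulas. You should state this explicitly rather than asserting that the prefactors of \eqref{b:14} and \eqref{b:16} ``drop out'' as written; what drops out of your argument are the corrected versions, each carrying an extra $\f{1}{2s}$. The slip is harmless downstream: in Proposition \ref{prop:s:26} the integrals enter only through the sign of $F_3-\f{2\si+1}{2\si}F_2^2/F_1$, which is unchanged when $F_1,F_2,F_3$ are all multiplied by the same positive constant.
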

 \begin{proof}
 	We easily pass to integrals in the radial variable as follows 
 	\begin{eqnarray*}
 		\int_{\rn} \f{1}{((2\pi|\xi|)^{2s}+\om)^j }d\xi &=& |\sn| \int_0^\infty  \f{\rho^{n-1} }{((2\pi\rho)^{2s}+\om)^j }d\rho =|\sn| \f{\om^{\f{n}{2s}-j}}{(2\pi)^n} \int_0^\infty \f{\rho^{\f{n}{2s}-1}}{(\rho+1)^j}d\rho=\\
 		&=& |\sn| \f{\om^{\f{n}{2s}-j}}{(2\pi)^n} \int_{-\infty}^\infty \f{e^{t \f{n}{2s}}}{(e^t+1)^j} dt
 	\end{eqnarray*}
 So, with $a:=\f{n}{2s}\in (0,1)$, matters are clearly reduced to computing the integrals 
 $$
 \int_{-\infty}^\infty \f{e^{t a}}{(e^t+1)^j} dt, 
 $$
 	for $a\in (0,1)$, $j=1,2,3$. 
 	In order to compute this integral, we use the residue theorem formula 
 	$$
 	\int_{\ga_R} \f{e^{a z}}{(e^z+1)^j} dz=2\pi i Res\left(\f{e^{a z}}{(e^z+1)^j}, \pi i \right).
 	$$
 	where $R>>1$, and $\ga_R=\ga^1_R\cup \ga^2_R\cup \ga^3_R\cup \ga^4_R$, and the curves $\ga_r^m, m=1,2,3,4$ are given, together with their orientation as follows,  
 	\begin{eqnarray*}
 & &	\ga^1_R=\{x\in (-R, R)\}, \ga^2_R=\{R+i h, h\in [0, 2\pi]\}, \\
 	& & 	\ga^3_R=\{x+2\pi i, x \in (R, -R)\}, \ga^4_R=\{-R+i h, h\in [2\pi , 0]\}.
 	\end{eqnarray*}
 	
 	\begin{center}
 		\begin{figure}
 			\label{fig:1} 
 			\includegraphics{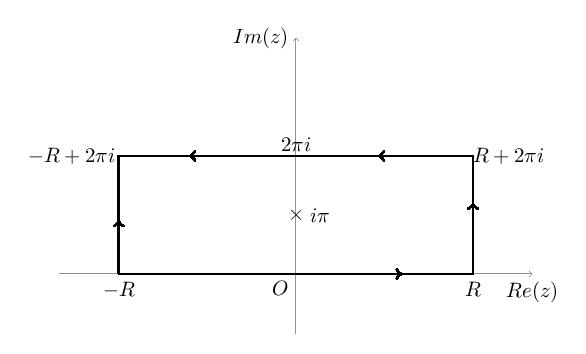}
 			\caption{Contour of integration}
 		\end{figure}
 	\end{center}

 	Clearly, 
 	$$
 	\int_{\ga^1_R} \f{e^{a z}}{(e^z+1)^j} dz+	\int_{\ga^3_R} \f{e^{a z}}{(e^z+1)^j} dz=
 	(1-e^{2\pi a i}) \int_{-R}^R \f{e^{t a}}{(e^t+1)^j} dt, 
 	$$
 	while for $R>>1$, 
 	$$
 	\left| 	\int_{\ga^2_R} \f{e^{a z}}{(e^z+1)^j} dz\right|\leq C \f{e^{R a}}{(e^R-1)^j}, \ \ 
 	\left| 	\int_{\ga^4_R} \f{e^{a z}}{(e^z+1)^j} dz\right|\leq C e^{-a R}. 
 	$$
 	It follows that 
 	$$
 	\lim_{R\to \infty} \int_{\ga_R} \f{e^{a z}}{(e^z+1)^j} dz=(1-e^{2\pi a i}) \int_{-\infty}^\infty \f{e^{t a}}{(e^t+1)^j} dt.
 	$$
 	It remains to compute the residues associated with this complex integration. This is a straightforward calculation, the results of which are below 
 	\begin{eqnarray}
 		\label{s:26}
 		Res\left(\f{e^{a z}}{e^z+1}, \pi i \right) &=&-  e^{i a \pi}  \\ 
 		\label{s:24}
 	 	Res\left(\f{e^{a z}}{(e^z+1)^2}, \pi i \right) &=&-(1-a)  e^{i a \pi}   \\ 
 	\label{s:21}
 	Res\left(\f{e^{a z}}{(e^z+1)^3}, \pi i \right) &=& -\f{1}{2} (2-a)(1-a) e^{i a \pi}
 	\end{eqnarray}
 The formulas \eqref{b:12}, \eqref{b:14}, \eqref{b:16} follow by substituting these expressions in the residue formulas and taking $R\to \infty$. 
 \end{proof}

% ------------------------------------------------------------------------

% ------------------------------------------------------------------------
\end{document}